\documentclass[11pt]{article}

\usepackage{amsmath, amssymb, amscd, amsthm, amsfonts}
\usepackage{graphicx}
\usepackage{hyperref}
\usepackage{indentfirst}
\usepackage{mathtools}
\usepackage{enumerate}
\usepackage{geometry}
\usepackage{gensymb}

\title{\textsc{\Large{\textbf{Revisiting Non-Rotating Star Models: Classical Existence and Uniqueness Theory and Scaling Relations}}}}

\author{Hangsheng Chen\thanks{Department of Mathematics, Statistics, and Computer Science, M/C 249, University of Illinois at Chicago, 851 S. Morgan Street, Chicago, IL 60607, USA. Email: hchen261@uic.edu}}

\date{ }

\newtheorem{theorem}{Theorem}

\newtheorem  {corollary} [theorem]{Corollary}

\newtheorem  {lemma} [theorem]{Lemma}
\newtheorem  {proposition}[theorem]{Proposition}

\theoremstyle{definition}

\newtheorem{assumption}[theorem]{Assumption}
\newtheorem  {remark} [theorem]{Remark}

\numberwithin{theorem}{section}

\makeatletter
\newcommand{\mylabel}[2]{#2\def\@currentlabel{#2}\label{#1}}
\makeatother

\begin{document}

\maketitle

\begin{abstract}\label{abstract}	
	This paper presents a systematic study of the properties of non-rotating stellar models governed by the Euler-Poisson system under general equations of state, including the case of polytropic gaseous stars. We revisit and extend existence results by Auchmuty and Beals \cite{AB71}, adapt the uniqueness results from the quantum mechanical framework of Lieb and Yau \cite{LY87} to the classical Newtonian mechanical setting. The results are also synthesized in McCann \cite{McC06} but without proof. The second work we do is applying a scaling method to establish relations between solutions with different total masses. As the mass tends to zero, we analyze convergence properties of the density functions and identify precise rates for the contraction or extension of their supports.
	
	{\footnotesize\textbf{Key words:} Gaseous stars, Euler-Poisson equations, Uniqueness, Scaling methods, Vanishing mass limit, Asymptotic behavior}
\end{abstract}

 \tableofcontents
 
\section{Introduction}\label{section-introduction}

\setlength{\parindent}{1.5em}

%

The motion of stars and planets has long been a classical subject in celestial mechanics, astrophysics, and mathematics. Historical models such as heliocentrism, proposed by Copernicus, and later studied by Newton and others on stellar rotation, form the foundation of rotating star or planet models. Early developments by Maclaurin, Jacobi, Poincaré, and Liapounov focused on incompressible, homogeneous or nearly homogeneous bodies like the Earth. For gaseous bodies such as Jupiter, stars, or systems like galaxies, density inhomogeneity and compressibility need to be accounted for. In these contexts, such bodies can be modeled as self-gravitating fluids, governed by the Euler–Poisson equations which have been extensively analyzed under various assumptions and classes of equations of state. We consider a fluid with density $\rho(x,t)\geq0$, velocity $v(x,t)\in\mathbb{R}^3$, and gravitational potential $-V(x,t)\in\mathbb{R}$, where $x\in\mathbb{R}^3$ is position and $t\geq0$ is time. Assuming the pressure $P(\rho)$ depends on the density only (with appropriate assumptions to be described later), the Euler–Poisson system is given by

\begin{equation} \label{EP}
	\begin{split}
		\partial_{t}\rho + \nabla \cdot \left( {\rho v} \right) &= 0,    \\
		\rho\partial_{t}v + \rho\left( {v \cdot \nabla} \right)v + \nabla P(\rho) &= \rho\nabla V,	\\
		\Delta V &= - 4\pi\rho.
	\end{split}	\tag{EP}
\end{equation}

The first attempts to construct axisymmetric rotating solutions to the compressible Euler–Poisson equations were made by astrophysicists Milne \cite{Mil23} and Zeipel \cite{Zei24} in the early 1920s, followed by Chandrasekhar \cite{Cha33} and Lichtenstein \cite{Lic33} in 1933. Further historical accounts can be found in \cite{Cha67, Jar13}.

Significant progress has been made for the single rotating star problem, where the star rotates about its axis. A special case of the rotating system is when the velocity $v$ is zero. In this case, the system is actually non-rotating, and the density $\rho$ is time-independent (stationary). Under these conditions, the Euler-Poisson equations reduce to the following equation:

\begin{equation} \label{EP'}
	\nabla P\left( \tilde{\rho}\left( x \right) \right) -\tilde{\rho}\left( x \right) \left( \nabla V_{\tilde{\rho}}\left( x \right) \right) =0
	\tag{EP'}
\end{equation}
where $\tilde{\rho}$ is a density function (in general with compact support), and $V_{\tilde{\rho}}(x) = {\int_{\mathbb{R}^3}{\frac{\tilde{\rho}(y)}{\left| {y - x} \right|}\,dy}}$.

For the non-rotating case, when the pressure satisfies a polytropic law, the solutions are known as Lane–Emden solutions. However, to obtain solutions for more general pressure laws and more general angular momentum (thus including the rotating case), two approaches have been employed. The first is variational method, as used by Auchmuty and Beals \cite{AB71M, AB71} to establish the existence of axisymmetric equilibria for compressible fluids; see also \cite{Auc91, AB71, CF80, CL94, Li91}. The second is a perturbative method based on the implicit function theorem, applied near non-rotating stars \cite{JM17, JM19, Hei94, SW17, SW19}. Nonlinear dynamical stability of rotating stars was shown in \cite{LS08, LS09} via variational techniques, while stability theory for non-rotating stars can be found in \cite{DLYY02, Jan14, Rei03}.

Each method has its advantages and limitations. For example, the variational approach typically requires $\gamma>\frac{4}{3}$ when $P(\rho)=K\rho^\gamma$ to prevent gravitational collapse (see Remark \ref{collapse} and Proposition \ref{unbounded below} of this paper, \cite[Section 2]{McC06}, or \cite[Section 1, 6 and 8]{AB71}). In contrast, perturbative methods can handle the range $\frac{6}{5}<\gamma<2$ \cite{JM19}, but they rely on prior knowledge of solutions in special non-rotating cases.

These methods are quite general and apply not only to single-body problems but also to multi-body systems. Variational methods were used by McCann \cite{McC06} for binary stars and star-planet systems in the authors' accompanying work \cite{Che26E3}. Perturbative approaches appear in the work of Alonso-Orán, Kepka, and Velázquez \cite{AKV23} on incompressible Euler–Poisson systems with an external particle, as well as in Lichtenstein’s construction of rotating binary stars \cite{Lic33}.

Researchers have also considered systems with more than two stellar objects, such as galaxies or general $N$-star systems. For rotating galaxies, single rotating galaxies were constructed in \cite{Sch08, SW17} in the spirit of Lichtenstein and Heilig \cite{Hei94, Lic33}. There is extensive literature on non-rotating galaxies: see \cite{BFH86, BT87, GR01, Rei07} and references therein. The orbital stability of stationary solutions has seen considerable progress over the past two decades \cite{Guo99, GL08, GR99E, GR99S, GR03, GR07, LMR05, LMR08, LMR11, LMR12, Rei07, Wol99}, with recent work on linearly oscillating galaxies in \cite{HRS22}. In \cite{CDD10}, Campos, Del Pino, and Dolbeault constructed $N$-body rotating galaxies by connecting them to relative equilibria (point particles) in $N$-body dynamics with small uniform angular velocity. They perturbed radial equilibria, and solved for the gravitational potential rather than the density, using a finite-dimensional reduction. Although natural, Jang and Seok remark in \cite{JS22} that the framework in \cite{CDD10} is not ideal for stability analysis. They adapt Rein’s reduction method \cite{Rei03, Rei07} to extend the discussion of binary star solutions to establish existence and stability for rotating binary galaxies modeled by the Vlasov–Poisson system \cite{JS22}. Furthermore, they note that for $N\geq 3$, uniformly rotating $N$-body stellar configurations lack a variational structure analogous to the binary case and are not generally expected to be stable \cite{JS22}. 

While recent research has extended to rotating multi-body systems, a thorough understanding of the simpler, non-rotating single-body problem remains very important. In addition to the existence result established via variational methods by Auchmuty and Beals \cite{AB71}, Lieb and Yau \cite{LY87} contributed results on uniqueness and other properties of ground states within the framework of quantum mechanics. The properties of these isolated bodies are often inspiring and provide technical tools required to handle the two-body case. These properties were hence later synthesized and extended by McCann \cite{McC06}. However, McCann did not provide detailed proofs, and some of the propositions are not entirely straightforward to justify.

%
\medskip
This paper aims to achieve the following two goals:
\begin{enumerate}[(a)]
	\item \label{goal (a)} To rigorously justify and extend the classical results. We revisit the existence and structure of non-rotating stars given by Auchmuty and Beals \cite{AB71} and McCann \cite{McC06} (Theorem \ref{non-rotating}). Following a suggestion of McCann \cite{McC06}, we prove uniqueness result in the classical Newtonian mechanical setting by adapting the quantum‑mechanical variational framework of Lieb and Yau \cite{LY87} to this setting (Theorem \ref{uniqueness in non-rotating cases}).
	\item \label{goal (b)} To explore the dependence of properties on the total mass of the system under polytropic law by employing a scaling method (Theorem \ref{scaling relation for energy} and Proposition \ref{scaling relation for derivatives}). The scaling analysis not only clarifies relations among solutions of different mass but also provides quantitative convergence results in the small-mass limit. In particular, we examine the decay rates of density profiles and the contraction or extension behavior of the supports of solutions (Remark \ref{asymptotic behaviour of radius and norm in single star case}).
\end{enumerate}

The results of this paper are adapted from the author’s Master’s thesis \cite{CVD24}, which also gave rise to two companion papers: \cite{Che26G1} and \cite{Che26E3}. The present work holds intrinsic interest within the theory of self-gravitating fluids. Furthermore, serving as a bridge within the thesis, this work builds upon conclusions from \cite{Che26G1} and, in turn, provides essential a priori estimates for the study of rotating star–planet systems in \cite{Che26E3}, where comparisons to the non-rotating case are essential. In \cite{Che26G1}, the conversion from the Euler–Lagrange equation to the Euler–Poisson equation was rigorously established, with particular attention given to showing that the energy-minimizing solutions still satisfy the Euler–Poisson equations at their boundaries. Nevertheless, this work is largely self-contained. Results from \cite{Che26G1} are given with proof outlines herein or cited as isolated technical details with complete proofs in \cite{Che26G1}.
%

The paper is organized according to the goals above. In Section \ref{section2-construction and results}, we introduce the mathematical settings and accomplish goal (\ref{goal (a)}), reviewing and extending the classical results. In Section \ref{section3-scaling relations}, we address goal (\ref{goal (b)}) through a scaling analysis.

\section{Construction and Results of Non-Rotating Systems}\label{section2-construction and results}

In this section we revisit and adapt the results theorems for non-rotating stars given by Auchmuty and Beals \cite{AB71} and Lieb and Yau \cite{LY87}, based on McCann's or the author's construction and assumptions \cite{McC06} \cite{Che26G1}. In the first subsection, we introduce the problem settings and some basic properties. In the second subsection, we review the existence theorems for non-rotating stars given by Auchmuty and Beals \cite{AB71} and McCann \cite{McC06}. In the third subsection, we adapt the uniqueness results from the quantum mechanical framework of Lieb and Yau \cite{LY87} to the classical Newtonian mechanical setting.

\subsection{Problem settings}\label{subsection2.1-notations and problem setting}



The state of a fluid may be represented by its mass density $\rho(x) \geq 0$ and velocity vector field $v(x)=0$ on $\mathbb{R}^3$ for non-rotating case. The fluid interacts with itself through Newtonian gravity hence we need to consider gravitational interaction energy, which will be given later. Moreover, to define internal energy, we consider a general form of the pressure $P(\rho)$ as the following (which is consistent with \cite{Che26G1}, see also \cite{AB71, McC06, JS22}):

\begin{assumption}
	\mbox{}
	\begin{enumerate}
		\item [\mylabel{F1}{(F1)}] $P:[0, \infty) \rightarrow[0, \infty)$ continuous and strictly increasing;
		\item [\mylabel{F2}{(F2)}] $\lim\limits_{s \rightarrow 0} P(s) s^{-\frac{4}{3}}=0$;
		\item [\mylabel{F3}{(F3)}] $\lim\limits_{s \rightarrow \infty} P(s) s^{-\frac{4}{3}}=\infty$.
	\end{enumerate}
\end{assumption}

Results under a weakened version of \ref{F3}, denoted as \ref{F3'}, will also be discussed later; see Remark \ref{collapse}. Here \textbf{assumption} \ref{F3'} is given as follows:
\begin{equation}\label{F3'}
	\lim\limits_{s \rightarrow \infty} \inf P(s) s^{-\frac{4}{3}}>K \tag*{(F3')}
\end{equation}
where $K>0$.

With these assumptions, we also define $A(s)$ as the following:
\begin{equation}\label{A}
	A(s):=\int_1^{\infty} P\left(\frac{s}{v}\right) d v=s \int_0^s P(\tau) \tau^{-2} d \tau
\end{equation}

Assuming that $\sigma$ is an energy minimizer, in order to show $\sigma$ is the solution to (\ref{EP'}) in the classical sense, we want the differentiability of $\sigma$ or of $P(\sigma)$, hence we introduce the following \textbf{assumption}: 
\begin{enumerate}
	\item[\mylabel{F4}{(F4)}] $P(\rho)$ is continuously differentiable on $[0, \infty)$, and $P^{\prime}(\rho)>0$ if $\rho>0$.
\end{enumerate}

Actually in \cite{Che26G1} we introduce another more general but also somewhat technical \textbf{assumption}:
\begin{enumerate}
	\item[\mylabel{F4'}{(F4')}] $P(\rho)$ is continuously differentiable on $[0, \infty)$. If $\rho>0, P(\rho)$ has non-vanishing (first order or higher order) derivative at $\rho$. That is, $\exists n \geq 1$, such that $P^{(n)}(\rho)$ exists and is not 0.
\end{enumerate}

Due to \ref{F2} we know $A(s)$ is well-defined. We also list some properties of $A(s)$ from another paper by the author \cite[Section 2]{Che26G1}. 

\begin{lemma}[Properties of $A$ {\cite[Section 2]{Che26G1}}]\label{properties of A}
	Let $A(s)$ is defined above (\ref{A}), then
	\begin{enumerate}[(i)]
		\item [\mylabel{i}{($i$)}] $A(s)$ is strictly increasing;
		\setcounter{enumi}{1}
		\item $A$ also satisfies \ref{F2} and \ref{F3} (or \ref{F3'} if we assume $P$ satisfies \ref{F3'} instead of \ref{F3}, though the constant $K$ can be different);
		\item $A^{\prime}$ satisfies: $\lim\limits_{s \rightarrow 0} A^{\prime}(s) \rho^{-\frac{1}{3}}=0$ and $\lim\limits_{s \rightarrow \infty} A^{\prime}(s) s^{-\frac{1}{3}}=\infty$;
		\item $A'(s)$ is continuous and 
		\begin{equation}\label{A'}
			A^{\prime}(s)=\left\{\begin{array}{r}\int_0^s P(t) t^{-2} d t+\frac{P(s)}{s}, s>0 \\ 0, s=0\end{array}\right.
		\end{equation}
		Moreover, $A^{\prime}(s) s-A(s)= P(s)$, $A^{\prime \prime}(s)=\frac{P^{\prime}(s)}{s} \geq 0$ a.e.;
		\item $A'$ is continuous and strictly increasing. Moreover, $A$ is convex;
		\item The inverse function of $A^{\prime}$, denoted by $\left(A^{\prime}\right)^{-1}$ or $\phi$, is well defined on $[0, \infty)$. Moreover, $\phi=\left(A^{\prime}\right)^{-1}$ is continuous since $A^{\prime}$ is continuous;
		\item If we assume $P(\rho)$ satisfies \ref{F4}, then $A^{\prime \prime}(\rho)$ exists and $A^{\prime \prime}(\rho)=\frac{P^{\prime}(\rho)}{\rho} \neq 0$ is continuous if $\rho>0$. In particular, $\phi=\left(A^{\prime}\right)^{-1} \in C^1((0, \infty))$.
	\end{enumerate}
\end{lemma}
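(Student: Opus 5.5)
The plan is to work directly from the two expressions for $A$ in (\ref{A}). First I check that they agree: substituting $\tau = s/v$ gives $d v = -s\tau^{-2}\,d\tau$, and this converts $\int_1^\infty P(s/v)\,dv$ into $s\int_0^s P(\tau)\tau^{-2}\,d\tau$. The integral converges at $\tau=0$ by \ref{F2}, because $P(\tau)\tau^{-2} = o(\tau^{-2/3})$ there.

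For (iv), write $A(s) = s\,G(s)$ with $G(s)=\int_0^s P(\tau)\tau^{-2}\,d\tau$. Since $P$ is continuous, the fundamental theorem of calculus gives, for $s>0$,
\[
A'(s)=G(s)+s\cdot P(s)s^{-2}=G(s)+\frac{P(s)}{s}.
\]
This immediately yields $A'(s)s-A(s)=P(s)$. At $s=0$, both terms of $A'(s)$ are $o(s^{1/3})$ by \ref{F2}, so the difference quotient $A(s)/s = G(s)\to 0$ and $A'(0)=0$. The same estimate gives $A'(s)\to 0$, so $A'$ is continuous on $[0,\infty)$. This also proves the first limit in (iii): $G(s)\le \int_0^s o(\tau^{-2/3})\,d\tau = o(s^{1/3})$ and $P(s)/s=o(s^{1/3})$.

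Next, $G'(s)=P(s)s^{-2}$, so $A'(s)=G(s)+P(s)/s$ has derivative $P(s)s^{-2}+\bigl(P'(s)s-P(s)\bigr)s^{-2}=P'(s)/s$. This holds wherever $P$ is differentiable, which is a.e. because $P$ is monotone (Lebesgue). For (v), I avoid a.e. derivatives, since $P$ need not be absolutely continuous. Instead I show monotonicity directly: $G$ is nondecreasing and $P(s)/s$... — this last term is not monotone in general. So I use the representation $A'(s)=\int_1^\infty \frac{1}{v}P'(s/v)\,dv$, which is not available either. The cleaner route is convexity: $A(s)=\int_1^\infty P(s/v)\,dv$ is an integral of nondecreasing functions, and $A(s)/s=G(s)$ is strictly increasing.

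For strict monotonicity of $A'$, I take $s<t$ and write
\[
A'(t)-A'(s)=\int_s^t P(\tau)\tau^{-2}\,d\tau+\frac{P(t)}{t}-\frac{P(s)}{s}.
\]
Bounding $\int_s^t P(\tau)\tau^{-2}\,d\tau \ge P(s)(1/s-1/t)$ gives $A'(t)-A'(s)\ge (P(t)-P(s))/t>0$ by \ref{F1}. Hence $A'$ is strictly increasing, $A$ is convex, and (i) follows because $A'>0$ on $(0,\infty)$.

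For (ii), the \ref{F2} part follows from $A(s)\le sG(s)=o(s^{4/3})$. For \ref{F3}, take $s\ge 2M$ and use $G(s)\ge \int_{s/2}^s P(\tau)\tau^{-2}\,d\tau \ge P(s/2)\cdot \frac{1}{s}$. This gives $A(s)\ge P(s/2)$, hence
\[
A(s)s^{-4/3}\ge 2^{-4/3}P(s/2)(s/2)^{-4/3}\to\infty.
\]
Under \ref{F3'} the same bound gives $\liminf A(s)s^{-4/3} > 2^{-4/3}K$, so the constant changes to $2^{-4/3}K$. The second limit in (iii) follows from $A'(s)\ge A(s)/s$, which is immediate from $A'(s)s-A(s)=P(s)\ge 0$.

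For (vi), $A'$ is continuous, strictly increasing, satisfies $A'(0)=0$, and $A'\to\infty$ by (iii). So it is a bijection of $[0,\infty)$ onto itself, and its inverse $\phi$ is continuous. For (vii), under \ref{F4} the formula $A''=P'/s$ holds everywhere on $(0,\infty)$, is continuous, and is positive. The inverse function theorem then gives $\phi\in C^1((0,\infty))$.

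The main obstacle is the strict monotonicity of $A'$ without any differentiability of $P$. This is handled by the elementary integral bound above rather than by integrating $A''$.
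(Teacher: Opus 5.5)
Your proof is correct. One paragraph needs cleaning up, noted at the end.

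\textbf{Comparison with the paper.} The paper does not prove this lemma; it imports it from \cite[Section 2]{Che26G1}, so your self-contained argument is a genuine alternative rather than a reconstruction. The only hint of the intended route is in the proof of Proposition \ref{unbounded below}, where growth of $P$ is transferred to $A$ ``following the proof of L'H\^opital's rule''. Concretely, write $A(s)s^{-4/3}=G(s)/s^{1/3}$ with $G(s)=\int_0^sP(\tau)\tau^{-2}\,d\tau$. The ratio of derivatives is exactly $3P(s)s^{-4/3}$. The one-sided form of L'H\^opital then gives both $\lim_{s\to0}A(s)s^{-4/3}=0$ and $\liminf_{s\to\infty}A(s)s^{-4/3}\ge 3\liminf_{s\to\infty}P(s)s^{-4/3}$ at once.

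Your dyadic bound $A(s)\ge P(s/2)$ is also valid. However, under (F3') it only gives the constant $2^{-4/3}K$, whereas the L'H\^opital route gives $3K$. The factor $3$ is sharp: compare $A=\frac{K}{\gamma-1}s^\gamma$ at $\gamma=\frac43$. The statement allows the constant to change, so nothing is lost here. The sharper constant would matter if you wanted a quantitative mass threshold as in Remark \ref{collapse}.

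Your treatment of (v) is the more careful part. The estimate $\int_s^tP(\tau)\tau^{-2}\,d\tau\ge P(s)\bigl(\frac1s-\frac1t\bigr)$, which yields $A'(t)-A'(s)\ge (P(t)-P(s))/t>0$, uses only monotonicity of $P$. By contrast, deducing monotonicity of $A'$ from the a.e.\ identity $A''=P'/s\ge 0$ in (iv) would be invalid when $P$ is not absolutely continuous. You correctly avoided that. An equivalent formulation is $A'(s)=\int_0^\infty P(\min\{s,\tau\})\,\tau^{-2}\,d\tau$, from which strict monotonicity is evident.

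\textbf{Clean-up.}
\begin{itemize}
\item Delete the false-start paragraph beginning ``For (v), I avoid a.e.\ derivatives''. Its final sentence says $A$ is an integral of nondecreasing functions and $A(s)/s$ is strictly increasing. Neither fact implies convexity.
\item Property (v) is actually proved by your next paragraph. To finish it, add $A'(t)\ge P(t)/t>0=A'(0)$ for the endpoint $s=0$. Then note that a $C^1$ function with increasing derivative is convex.
\item The $M$ in ``take $s\ge 2M$'' is undefined and unnecessary, since only the limit $s\to\infty$ is used.
\end{itemize}
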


If polytropic equations of state $P(s)=Ks^\gamma$ holds, where the parameter $\gamma > \frac{4}{3}$,  then easy to check $P$ satisfies \ref{F1} \ref{F2} \ref{F3}\ref{F4} or \ref{F1}\ref{F2}\ref{F3}\ref{F4'}, and
\begin{equation}\label{AwithP}
	A(s)=\frac{K}{\gamma-1} s^\gamma
\end{equation}
Hence one can check $A(s)$ indeed satisfies Lemma \ref{properties of A}.

In the following sections, we assume $P(\rho)$ satisfies \ref{F1}\ref{F2}\ref{F3} unless otherwise specified. We will mention \ref{F4} or \ref{F4'} or other assumptions when we want to use them.

We consider an ``admissible class'' for $\rho$ as the following:
\begin{align}\label{admissible class}
	R\left(\mathbb{R}^3\right) & :=\left\{\left.\rho \in L^{\frac{4}{3}}\left(\mathbb{R}^3\right) \right\rvert\, \rho \geq 0, \int_{\mathbb{R}^3} \rho \,dx=1\right\} 
\end{align}

Then given $\rho$ in such set, we consider energy $E_{0}(\rho)$ consisting of two terms:

\begin{equation}\label{energy}
	E_{0}(\rho):=U(\rho)-\frac{G(\rho, \rho)}{2}
\end{equation}

\begin{equation} \label{U}
	U(\rho):=\int_{\mathbb{R}^3} A(\rho(x)) \,dx
\end{equation}

\begin{equation} \label{G}
	G(\sigma, \rho):=\int_{\mathbb{R}^3} V_\sigma \rho\,dx=\iint_{{\mathbb{R}^3}\times {\mathbb{R}^3}} \frac{\rho(x) \sigma(y)}{|x-y|} \,dxdy
\end{equation}

Here $U(\rho)$ is the \textit{internal energy}, and $G(\rho, \rho)$ is the \textit{gravitational potential energy} (also called \textit{gravitational interaction energy}).


\begin{remark}
	In fact, Auchmuty and Beals \cite{AB71} consider a more general form of the energy with angular momentum
	\begin{equation} \label{energy in AB71}
	E(\rho)=E_0(\rho)+\frac{1}{2} \int_{\mathbb{R}^3} \rho(x) L\left(m_\rho(r(x))\right) r^{-2}(x) \,dx
	\end{equation}
	
	Another model for uniformly rotating stars was studied by McCann \cite{McC06}, where the total angular momentum $\boldsymbol{J}=J\hat{e}_z=(0,0,J)^T$ is prescribed. The corresponding energy $E_J(\rho)$ is defined as the following: 
	
	\begin{equation} \label{energy of UR}
		E_J(\rho)=U(\rho)-\frac{G(\rho, \rho)}{2}+T_J(\rho)
	\end{equation}
	with the \textit{kinetic energy} $T_J(\rho)$ given by
	\begin{equation}\label{T_J}
		T_J(\rho):=\frac{J^2}{2 I(\rho)}
	\end{equation}
	
	In this paper we mainly consider the non-rotating case ($L=0$ or $J=0$, reflected in the subscript ${0}$ in $E_{0}(\rho)$). This is not only for the sake of simplicity but also helps to produce more results such as uniqueness result in subsection \ref{subsection2.3-uniqueness results for non-rotating bodies}.
\end{remark}

\begin{remark}\label{finite gravitational interaction energy}
	Since $\rho \in L^{\frac{4}{3}}\left( \mathbb{R}^{3} \right) \cap L^{1}\left( \mathbb{R}^{3} \right)$, we have $G(\rho,\rho)<\infty$, see for example \cite[Section 2]{Che26G1}. Hence $E_{0}(\rho)$ is well-defined.
\end{remark}

Actually we can generalize our definition of $E_0(\rho)$ to the set 
\begin{equation}
	\begin{aligned}
		mR\left(\mathbb{R}^3\right) & :=\left\{\left.\rho \in L^{\frac{4}{3}}\left(\mathbb{R}^3\right) \right\rvert\, \rho \geq 0, \int_{\mathbb{R}^3} \rho\,dx=m\right\} \\
	\end{aligned}
\end{equation}

\begin{remark}
	From Remark \ref{finite gravitational interaction energy}, we see one reason why we require $\rho$ to belong not only to $L^1$ but also to $L^{\frac{4}{3}}$. Furthermore, selecting the specific exponent $\frac{4}{3}$ can also be shown as a natural assumption if we hope the minimal energy $\inf\limits_{\rho \in m{R}\left(\mathbb{R}^3\right)} E_0(\rho)$ to be finite. We will discuss this later (Proposition \ref{unbounded below}, see also \cite[Section 1,6 and 8]{AB71}, and \cite[Section 2]{McC06}). We will also show actually $\frac{4}{3}$ is the critical number such that the maximal energy is finite (Corollary \ref{unbounded above}).
\end{remark}

Consider the non-rotating minimizer $\sigma_m$ of $E_0(\rho)$ among configurations of mass $m \in [0,\infty)$, the corresponding minimum energy is finite due to the remark above. For the sake of convenience, we denote them by
\begin{equation}\label{e_0}
	e_0(m):=E_0\left(\sigma_m\right)=\inf _{\rho \in {R}\left(\mathbb{R}^3\right)} E_0(m \rho).
\end{equation}

We also denote $e_0(1)$ by $e_0$.

Since the energy is translation-invariant, we may just consider looking for an energy minimizer $\rho$ such that the center of mass $\bar{x}(\rho)$ is $0$, where 
\begin{equation}\label{centerofmass}
	\bar{x}(\rho):=\frac{\int_{\mathbb{R}^3} x \rho(x) \,dx}{\int_{\mathbb{R}^3} \rho(x) \,dx}
\end{equation}
Additionally, let support of $\rho$ be the smallest closed set carrying the full mass of $\rho$ (denoted by spt $\rho$). Intuitively we hope spt $\rho$  to be compact and simply-connected, as this aligns with the case of a single-star system.

\subsection{Existence Results for Non-rotating Bodies}\label{subsection2.2-Existence Results for Non-rotating Bodies}

Recall that in the calculus of variations, if an energy functional has a minimizer, then this minimizer must satisfy the Euler–Lagrange equation\footnote{When the minimization is carried out over a restricted class, the Euler–Lagrange equation may take the form of an inequality or involve taking the positive part; see, for example, \cite[Section 2, Appendix A]{Che26G1}.}. Moreover, it turns out that the Euler–Lagrange equation is equivalent, in a certain sense, to the Euler–Poisson equation (see, for example, \cite[Section 2]{Che26G1} or \cite{JS22}). Based on this observation, we have Auchmuty, Beals, Lieb and Yau's results \cite{AB71, LY87}. They not only show the existence of one-body and non-rotating solutions of Euler-Poisson equations, but also indicate some useful properties. McCann lists those results as the following theorem:

\begin{theorem}[Non-rotating Stars {\cite{AB71, LY87, McC06}}]\label{non-rotating}
	For $E_0(\rho)$ from (\ref{energy}), $e_0(m)$ from (\ref{e_0}) and $m \in [0,  \infty)$:
	\begin{enumerate}[(i)]
		\item $E_0(\rho)$ attains its minimum $e_0(m)$ among $\rho$ such that $\rho \in m{R}\left(\mathbb{R}^3\right)$;
		\item $e_0(m)$ decreases continuously from $e_0(0)=0$ and is strictly concave;
	\end{enumerate}
	There are bounds $R_0(m)$ and $C_0(m)$ on the radius and central density, such that any mass $m$ minimizer $\sigma_m$ of $E_0(\rho)$ satisfies.
	\begin{enumerate}[(i)]
		\setcounter{enumi}{2}
		\item $\sigma_m$ is spherically symmetric and radially decreasing after translation;
		\item $\left\|\sigma_m\right\|_{L^{\infty}} \leq C_0(m)$;
		\item spt $\sigma_m$ is contained in a ball of radius $R_0(m)$;
		\item $\sigma_m$ is continuous; moreover, if  $P(\rho)$ satisfies \ref{F4}, then $\phi \in C^1 ((0,\infty))$ and $\sigma_m\in C^1 (\{\sigma_m>0\})$; 
		\item $\sigma_m$ satisfies 
		\begin{equation} \label{EL}
			A^{\prime}(\sigma_m(x))=\left[V_{\sigma_m}(x)+\lambda_m\right]_{+} \tag{EL}
		\end{equation}
		 on all of $\mathbb{R}^3$ and a single Lagrange multiplier $\lambda_m\left\{\begin{array}{l}<0, m>0 \\ =0, m=0\end{array}\right.$. Here $[\cdot]_{+}$ is the nonnegative (positive) part function defined as $[\lambda]_{+}:=\max \{\lambda, 0\}$;
		\item the left and right derivatives of $e_0(m)$ bound $\lambda_m$: $e_0^{\prime}\left(m^{+}\right) \coloneq\lim\limits_{\widetilde{m}\rightarrow m^{+}}{e_{0}^{'}\left( \widetilde{m} \right)} \leq \lambda_m \leq e_0^{\prime}\left(m^{-}\right)\coloneq \lim\limits_{\widetilde{m}\rightarrow m^{-}}{e_{0}^{'}\left( \widetilde{m} \right)}$;
		\item if $P(\rho)$ satisfies assumption \ref{F4'}, then $\rho$  satisfies the reduced Euler-Poisson equations (\ref{EP'}).
	\end{enumerate}
\end{theorem}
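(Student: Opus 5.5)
I would prove the statement along the variational route of Auchmuty and Beals \cite{AB71}. The first step is the a priori bounds. By \ref{F2}, \ref{F3} and Lemma \ref{properties of A}, for every $\varepsilon>0$ there is $C_\varepsilon$ with $\rho^{4/3}\le \varepsilon A(\rho)+C_\varepsilon\rho$. Combined with the Hardy--Littlewood--Sobolev bound $G(\rho,\rho)\le C\|\rho\|_{L^{4/3}}^{4/3}\|\rho\|_{L^1}^{2/3}$, this gives
\[
E_0(\rho)\ge \tfrac12 U(\rho)-C(m)
\]
on $mR(\mathbb{R}^3)$. So $e_0(m)>-\infty$ and minimizing sequences are bounded in $L^1\cap L^{4/3}$.

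For (i) and (iii), take a minimizing sequence $\rho_k$ and replace it by its symmetric decreasing rearrangements $\rho_k^*$. Rearrangement leaves $U$ unchanged and does not decrease $G$ (Riesz rearrangement inequality), so $\rho_k^*$ is still minimizing. Radially decreasing functions of bounded mass satisfy $\rho^*(x)\le \frac{3m}{4\pi|x|^3}$, which gives uniform tails, and $G$ is weakly continuous on bounded radial sets away from the origin. A weak $L^{4/3}$ limit $\sigma$ then satisfies $E_0(\sigma)\le e_0(m)$ by convexity of $A$ (Lemma \ref{properties of A}(v)). However, $\int\sigma$ could be less than $m$, since mass may escape to infinity.

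Ruling out this loss of mass is where (ii) enters, and I expect it to be the main obstacle. Let $\mu=\int\sigma$. One has $E_0(\sigma)\ge e_0(\mu)$, while the escaping part contributes at least $e_0(m-\mu)$ asymptotically. So it suffices to show the strict subadditivity
\[
e_0(m)<e_0(\mu)+e_0(m-\mu).
\]
I would get this from strict concavity and $e_0(0)=0$. First, $e_0(m)<0$ for $m>0$: take $\rho_t=t^3\rho(tx)$ with $t$ small, so that \ref{F2} makes $U$ of order $o(t)$ while $G$ is of order $t$. Then, following Lieb--Yau, use the scaling $\rho\mapsto \theta\rho$ with $\theta>1$: $U$ grows sublinearly-comparably while $G$ grows like $\theta^2$. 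This gives $e_0(\theta m)<\theta e_0(m)$, which already implies strict subadditivity. Concavity of $e_0$ itself I would obtain by comparing minimizers: one splits a minimizer's mass by inner/outer radial layers, or uses the Lagrange multiplier bounds of (viii). Monotone decrease and continuity then follow from concavity together with $e_0\le 0$. Strict symmetry of every minimizer, not just of some minimizer, follows from the strict case of Riesz's inequality for the Coulomb kernel.

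For (vii), perturb $\sigma$ by $\sigma+\varepsilon(\eta-\frac{\int\eta}{m}\sigma)$ with admissible $\eta$. The first variation gives $A'(\sigma)=V_\sigma+\lambda$ on $\{\sigma>0\}$ and $A'(0)=0\ge V_\sigma+\lambda$ elsewhere, which is \eqref{EL}. Since $V_\sigma\to0$ at infinity and $\sigma$ has finite mass, $\lambda\le 0$. Comparing one-sided variations of mass, $\sigma\mapsto(1\pm\delta)\sigma$ plus a rescaling, with $e_0(m\pm\delta)$ yields (viii), and concavity with $e_0<0$ forces $\lambda_m<0$.

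From \eqref{EL}, the remaining items follow. For (v), $V_\sigma(x)\le m/|x|$ for a radial $\sigma$ (Newton's theorem), so $\sigma=0$ once $m/|x|<|\lambda_m|$; this gives $R_0(m)=m/|\lambda_m|$. For (iv), $V_\sigma\in L^\infty$ because $\sigma\in L^1\cap L^{4/3}$ radial and $V$ is bounded by $\|\sigma\|_{L^1}$ and $\|\sigma\|_{L^{4/3}}$-controlled near-field terms; then $\sigma=\phi([V_\sigma+\lambda]_+)$ gives $C_0$. For (vi), $V_\sigma$ is continuous, and in fact $C^1$ since $\sigma\in L^\infty$ with compact support; then $\phi$ is continuous and, under \ref{F4}, $C^1$ by Lemma \ref{properties of A}(vii). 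For (ix), on $\{\sigma>0\}$, differentiate $A'(\sigma)=V+\lambda$ and use $sA''(s)=P'(s)$ to get $\nabla P(\sigma)=\sigma\nabla V$. The boundary case under \ref{F4'} I would cite from \cite{Che26G1}.
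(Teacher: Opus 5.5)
Your existence argument takes a genuinely different route from the paper's. The paper minimizes over the truncated classes $W_R$ and then removes the truncation using uniform $L^\infty$, multiplier and support bounds. You instead symmetrize a minimizing sequence, exclude loss of mass by strict subadditivity, and get (v) directly from Newton's bound $V_\sigma\le m/|x|$. That route is shorter and can work. However, the step you yourself call the main obstacle is justified incorrectly.

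Under $\rho\mapsto\theta\rho$ the internal energy grows \emph{super}linearly, because $A$ is convex with $A(0)=0$. The inequality $E_0(\theta\rho)<\theta E_0(\rho)$ follows only if $A(s)/s^2$ is nonincreasing (e.g.\ polytropic with $\gamma\le 2$), whereas (F1)--(F3) allow every $\gamma>\frac43$. For $P=K\rho^3$ and a minimizer $\sigma$, the relation $G(\sigma,\sigma)=(6\gamma-6)U(\sigma)=12\,U(\sigma)$ gives $E_0(6\sigma)=216U-216U=0>6E_0(\sigma)$. Subadditivity needs $\theta=m/\mu$ arbitrarily large, so this is fatal.

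The correct scaling is the mass-increasing dilation $\rho_\theta(x)=\rho(\theta^{-1/3}x)$. It gives $U(\rho_\theta)=\theta U(\rho)$ exactly and $G(\rho_\theta,\rho_\theta)=\theta^{5/3}G(\rho,\rho)$. Since $G(\rho_k,\rho_k)\ge-2e_0(m)+o(1)$ along a minimizing sequence, you get $e_0(\theta m)\le\theta^{5/3}e_0(m)<\theta e_0(m)$. In your radial setting, strict monotonicity of $e_0$ already suffices: Newton's bound gives $\int_{|x|>R}\rho V_\rho\le m^2/R$, so the escaping mass contributes $\ge 0$.

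Two further steps do not hold as written.

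\textbf{Concavity in (ii).} Strict concavity is not proved. ``Split by radial layers or use (viii)'' is not an argument. Moreover, (viii) only yields Dini-derivative bounds, and extracting concavity from them would require $\lambda_m$ to be nonincreasing in $m$, which you never establish. You derive $\lambda_m<0$ from concavity, and implicitly also the uniformity of $R_0(m)=m/|\lambda_m|$ over all minimizers, so both remain open. (The paper proves (ii) only for polytropic $P$, from the exact law $e_0(m)=m^{\frac{5\gamma-6}{3\gamma-4}}e_0$, and cites Lieb--Yau otherwise.)

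The dilation repairs this as well. For a minimizer $\sigma_m$, the map $M\mapsto\frac{M}{m}U(\sigma_m)-\left(\frac{M}{m}\right)^{5/3}\frac{G(\sigma_m,\sigma_m)}{2}$ is a $C^2$ strictly concave majorant of $e_0$ touching it at $M=m$. A continuous function with such a majorant at every point is strictly concave, and continuity of $e_0$ again follows from the dilation. Alternatively, differentiating $E_0(\lambda^3\sigma_m(\lambda x))$ at $\lambda=1$ gives $G(\sigma_m,\sigma_m)=6\int P(\sigma_m)\,dx$. Then (EL) yields $m\lambda_m=E_0(\sigma_m)-2\int P(\sigma_m)\,dx<e_0(m)$ directly, uniformly over minimizers.

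\textbf{The $L^\infty$ bound in (iv).} It is false that $V_\sigma\in L^\infty$ for radial $\sigma\in L^1\cap L^{4/3}$. Take $\sigma=|x|^{-a}\mathbf{1}_{B_1(0)}$ with $2\le a<\frac94$: it is radially decreasing and lies in $L^1\cap L^{4/3}$, yet $V_\sigma(0)=4\pi\int_0^1 r^{1-a}\,dr=\infty$.

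You need the paper's bootstrap through (EL). Since $\lambda_m\le 0$, you have $A'(\sigma)\le V_\sigma$, and $A'(s)\ge s^{1/3}$ for large $s$ by (F3). Together these turn $V_\sigma\in L^r$ with $r<12$ into $\sigma\in L^{r/3}$ with $r/3>\frac32$, hence $V_\sigma\in L^\infty$. The uniform bound $U(\sigma_m)\le 2(e_0(m)+C(m))$ then makes $C_0(m)$ uniform over minimizers.
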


\begin{remark}
	Theorem \ref{non-rotating} (iii), which is based on the rearrangement inequality, actually implies the support of $\rho$ is simply connected. This corresponds to a single-star system and thus satisfies our expectation. In contrast, we point out that in McCann’s binary star system \cite{McC06}, while he shows that the support of $\rho$ lies in two separate balls, it does not prove that the support consists of exactly two connected components. Whether it truly has exactly two connected components—and thus corresponds to a binary star system—remains an open question worthy of discussion.
\end{remark}

\begin{remark}
	Notice Auchmuty and Beals only concluded that (\ref{EP'}) is satisfied in the region where $\rho > 0$ \cite[Theorem A]{AB71}, whereas in Theorem \ref{non-rotating} (ix) we establish a stronger result—namely, that the equation \eqref{EP'} holds over the entire $\mathbb{R}^3$ space.
\end{remark}

Given that $\rho$ is an energy minimizer, to exploit this property better, we introduce the following perturbation set $P_{\infty}(\rho)$(the motivation behind its definition and related properties can be found in \cite[Section 4]{Che26G1}), which depends on $\rho$:
\begin{equation}\label{perturbation sets}
	P_{\infty}(\rho):=\bigcup_{R<\infty} P_R(\rho)
\end{equation}

Here, $P_R(\rho)$ is defined as:
$$P_R(\rho)=\left\{ \sigma \in L^{\infty}(\mathbb{R}^3)\mid \begin{array}{ll}
	\sigma(x)=0, & \text{where } x \text{ statisfies } \rho(x) >R \text{ or } |X|>R\\
	\sigma(x)\geq 0, & \text{where } x \text{ statisfies } \rho(x) <R^{-1}
\end{array} \right\}$$

One can see $P_{\infty}(\rho)$ is a convex cone. To apply variational method, we need to consider the derivative of $E_0$:
	\begin{lemma}[Differentiability of Energy $E_0(\rho)$ {\cite[Section 5]{Che26G1}}]\label{diff. of energy}
	Given $\rho \in m{R}^3$ with $U(\rho)<\infty$, we have $E_0(\rho)$ is differentiable at $\rho$ in the direction of $P_\infty (\rho)$. The derivative at $\rho$ is $E_0^{\prime}(\rho)$ in the sense that $\forall \sigma\in P_{\infty}(\rho)$, $E_0^{\prime}(\rho)(\sigma)=\int_{\mathbb{R}^3} E_0^{\prime}(\rho) \sigma\,dx$\footnote{To remain consistent with the notation in \cite[Section 4]{AB71}, we use $E_0^{\prime}$ as the symbol for both linear functional and function,  provided it does not cause confusion.}, where on the right hand side the function $E_0^{\prime}(\rho)$ is given by
	\begin{equation}\label{variational derivative}
		E_0^{\prime}(\rho)(x):=A^{\prime}(\rho(x))-V_\rho(x)
	\end{equation}
\end{lemma}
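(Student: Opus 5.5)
The plan is to split $E_0$ into the internal energy $U$ and the gravitational term $G(\rho,\rho)/2$ and differentiate each along a direction $\sigma\in P_\infty(\rho)$. Fix $\sigma\in P_R(\rho)$ for some $R<\infty$. Then $\rho+t\sigma\ge 0$ for all sufficiently small $t>0$: where $\rho<R^{-1}$ we have $\sigma\ge0$, and where $\rho\ge R^{-1}$ we have $\rho+t\sigma\ge R^{-1}-t\|\sigma\|_\infty\ge0$. Also $\sigma$ is bounded and supported in $B_R$, so $\sigma\in L^1\cap L^{4/3}\cap L^\infty$. I will show that the one-sided derivative
\[
\lim_{t\to0^+}\frac{E_0(\rho+t\sigma)-E_0(\rho)}{t}
\]
exists and equals $\int (A'(\rho)-V_\rho)\sigma\,dx$. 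If one also wants the two-sided limit, the same argument applies to $-\sigma$ for $t$ small, wherever the sign constraint permits.

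\emph{Gravitational part.} Bilinearity and symmetry of $G$ give
\[
G(\rho+t\sigma,\rho+t\sigma)-G(\rho,\rho)=2tG(\rho,\sigma)+t^2G(\sigma,\sigma).
\]
By Remark \ref{finite gravitational interaction energy}, or the Hardy--Littlewood--Sobolev inequality with exponents $p=q=6/5$ (these lie between $1$ and $4/3$), both $G(\rho,\sigma)$ and $G(\sigma,\sigma)$ are finite. So the derivative of $-\tfrac12G(\rho,\rho)$ in direction $\sigma$ is $-G(\rho,\sigma)=-\int V_\rho\sigma\,dx$. Here I also need $V_\rho\sigma\in L^1$. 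This holds because $V_\rho$ is bounded on bounded sets: split the kernel $1/|x-y|$ into its $L^{4}$ part near the diagonal and its $L^\infty$ part away from it, and use $\rho\in L^{4/3}\cap L^1$.

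\emph{Internal energy part.} This is the main step. Pointwise, $A$ is $C^1$ (Lemma \ref{properties of A}(iv)), so
\[
\frac{A(\rho+t\sigma)-A(\rho)}{t}\to A'(\rho)\sigma \quad\text{a.e.}
\]
To pass the limit under the integral I will use dominated convergence. The mean value theorem bounds the difference quotient by $\sup_{|s|\le t_0}A'(\rho+s\sigma)\,|\sigma|$, and this expression vanishes outside $B_R\cap\{\rho\le R\}$. On that set, $0\le\rho+s\sigma\le R+t_0\|\sigma\|_\infty$, and $A'$ is increasing and continuous (Lemma \ref{properties of A}(v)). So the dominating function is $A'(R+t_0\|\sigma\|_\infty)\|\sigma\|_\infty\mathbf 1_{B_R}$, which is integrable. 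Hence
\[
\frac{d}{dt}\Big|_{0^+}U(\rho+t\sigma)=\int A'(\rho)\sigma\,dx.
\]
This is exactly why $P_R$ cuts off the set where $\rho>R$: there $A'(\rho)$ may be unbounded and non-integrable. The hypothesis $U(\rho)<\infty$ is what guarantees that $U(\rho+t\sigma)$ is finite, so the difference quotient is meaningful. Finiteness follows because $A$ is convex and $\rho+t\sigma$ differs from $\rho$ only on a set of finite measure, where it stays bounded.

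\emph{Conclusion.} Combining the two parts gives $E_0'(\rho)(\sigma)=\int (A'(\rho)-V_\rho)\sigma\,dx$. The map $\sigma\mapsto E_0'(\rho)(\sigma)$ is linear, and the function $A'(\rho)-V_\rho$ is locally integrable on $\{\rho\le R\}$, so it represents the functional as claimed. I expect the domination step for $U$, together with checking that $V_\rho\sigma$ is integrable, to be the only points needing care; the rest is algebra.
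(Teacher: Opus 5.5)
The paper does not prove this lemma itself; it cites \cite[Section 5]{Che26G1}. Your route is the natural direct argument: expand $G$ bilinearly, and use dominated convergence for $U$ with the cutoffs $\rho\le R$, $|x|\le R$ built into $P_R(\rho)$. Its structure is correct, and the domination step for $U$ is exactly right. There is, however, one false step, which you should delete.

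You claim that $V_\rho$ is bounded on bounded sets because the near-diagonal part of the kernel lies in $L^4$. It does not: $|x|^{-1}\mathbf{1}_{\{|x|<1\}}\in L^p(\mathbb{R}^3)$ only for $p<3$, while the conjugate exponent of $4/3$ is $4$. This is why Proposition \ref{bound of potential} with $p=4/3$ only gives $V_\rho\in L^r$ for $3<r<12$. The claim also fails under the lemma's own hypotheses. Take a polytropic law with $\frac43<\gamma<\frac32$ and $\rho=c|x|^{-a}\mathbf{1}_{\{|x|<1\}}$ with $2\le a<3/\gamma$. This $\rho$ has finite mass and $U(\rho)<\infty$. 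Yet $\int\rho(y)|y|^{-1}\,dy=\infty$, and $V_\rho(x)\to\infty$ as $x\to 0$.

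What you actually need, $V_\rho\sigma\in L^1$, follows from the Hardy--Littlewood--Sobolev bound you had just invoked, applied to $\rho$ and $|\sigma|$: $\int V_\rho|\sigma|\,dx=G(\rho,|\sigma|)\le C\|\rho\|_{6/5}\|\sigma\|_{6/5}<\infty$. This absolute bound is also what justifies Fubini when you write $G(\rho,\sigma)=\int V_\rho\sigma\,dx$.

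Two smaller points. In the domination step, take the supremum over $s\in[0,t_0]$ rather than $|s|\le t_0$, since $\rho+s\sigma$ can be negative for $s<0$ where $\rho<R^{-1}$. And finiteness of $U(\rho+t\sigma)$ does not use convexity of $A$: it holds because $A(\rho+t\sigma)-A(\rho)$ is bounded and supported in $B_R$.
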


\begin{remark}\label{diff. at mini.}
	To apply Lemma \ref{diff. of energy}, we need to verify $U(\rho) <\infty $. It turns out if $\rho \in L^1 \cap L^\infty$, or $\rho$ minimizes $E_0(\rho)$ locally (under the topology induced by Wasserstein $L^\infty$ distance) or globally on $m{R}\left(\mathbb{R}^3\right)$, then $U(\rho)<\infty$, see \cite[Section 5]{Che26G1}.
\end{remark}

\begin{proof}[Proof of Theorem \ref{non-rotating}]
	{
		\rm
		Here we provide a proof outline, supplement some ideas and details not mentioned explicitly in \cite{AB71, LY87}. One can refer to \cite{AB71, LY87} and references therein to understand some step(s) omitted here.
		
		When $m=0$, we can see $\rho = 0$, hence the results (i) (iii-vii) are easy to check. The results (ii) and (viii) can be proved in the same way as in the case $m>0$.  Therefore, we assume $m>0$ in the following arguments unless otherwise specified.
		
		To prove the existence of a minimizer \footnote{It is natural to consider direct method \cite{Dal93}. However, so far, we only know $m R\left(\mathbb{R}^3\right)$ is bounded in $L^1\left(\mathbb{R}^3\right)$, which only implies weak star convergence. It is not enough to show the weak continuity of $G(\rho, \rho)$ and then the weak lower semi-continuity of $E_0(\rho)$ with respect to $\rho$ on $W_\infty \coloneq m {R}\left(\mathbb{R}^3\right)$, see Proposition \ref{compactness of convolution for q>3} or Proposition \ref{compactness of convolution for q>=1}. Hence, the direct method may not be valid on $W_\infty $. We need to construct a constrained admissible class $W_R$.}, we first construct a constrained admissible class: 
		
		\begin{equation}\label{W_R}
			W_R = \left\{ 
			\rho_R \in mR(\mathbb{R}^3)  \;\middle|\; 
			\begin{aligned}
				& \rho_R \geq 0, \, \rho_R \text{ is axially symmetric w.r.t. } z\text{-axis}, \\
				& \text{centered in the sense that: }\int_{z<0} \rho_R \, dx = \int_{z>0} \rho_R \, dx = \frac{m}{2}, \\
				& \|\rho_R\|_{L^\infty} \leq R, \, \text{spt}(\rho_R) \subset B_R(0), \\
				& U(\rho_R) < \infty, \, G(\rho_R, \rho_R) < \infty
			\end{aligned}
			\right\}
		\end{equation}
		Here $B_R(\cdot)$ is the open ball defined by $B_R(\cdot):=\left\{x \in \mathbb{R}^3\mid |x-\cdot| <R\right\}$. The axial symmetry and centered condition in \eqref{W_R} ensure that the center of mass of $\rho_R$ is $(0,0,0)^T$. Notice that $B_R(0)$ is a bounded set, by Proposition \ref{compactness of convolution for q>3} or Proposition \ref{compactness of convolution for q>=1}, we can show the weak continuity of $G(\rho, \rho)$ with respect to $\rho$ on $W_R$. Then one can apply direct method to show the existence of a minimizer $\sigma_R$ (depending on $m$) on $W_R$. In order to show such minimizer $\sigma_R$ is truly a minimizer on the whole admissible class $W_\infty$ for sufficiently large $R$, we need the following 4 lemmas.
		\begin{enumerate}[(1)]
			\item [\mylabel{1}{(1)}]  Show the $L^{\infty}\left(\mathbb{R}^3\right)$ bound of $\sigma_R$ is uniform, i.e. $\forall R>R_0,\left\|\sigma_R\right\|_{L^{\infty}\left(\mathbb{R}^3\right)} \leq k_1$. (Lemma \ref{uniform bound of norm in single star case})
			\item [\mylabel{2}{(2)}] Show $\forall R>\widetilde{R}_0$, $E_0\left(\sigma_R\right) \leq \alpha<0$, $\lambda_R \leq l<0$, where $\lambda_R$ is the corresponding Lagrange multiplier. (Lemma \ref{uniform bound for energy and multipliers})
			\item [\mylabel{3}{(3)}] Show $\forall R>R_1$, spt $\sigma_R$ is contained in $\overline{B_{R_1}(0)}$. (Lemma \ref{uniform bound on the sizes in single star case})
			\item [\mylabel{4}{(4)}] Take $R^{\prime}=\max \left\{k_1, R_1\right\}+1$, show $\sigma_{R^{\prime}}$ is a minimizer on $W_\infty=m {R}\left(\mathbb{R}^3\right)$. (Lemma \ref{existence of global minimizer in single star case})
		\end{enumerate}
		
		The proofs of the above four lemmas are postponed to later text. Once these lemmas are established, we obtain a minimizer which satisfies Parts (iv) (v). Therefore, \textbf{Part (i)} holds true with finite $e_0(m)$ by the second lemma. With the strategies similar to that employed in the proofs of the above lemmas, one can show \textbf{Part (iv)} and Part (v) actually hold true for any minimizer $\sigma_m$ (observe that the last term in (\ref{can be relaxed to half mass}) can be replaced by $\frac{m}{4 R}$ since $\int_{B_R(0)} \sigma_m\,dx \geq \frac{m}{2}$ for sufficiently large $R$). More precisely, the proof of \textbf{Part (v)} also relies on Part (vi) to show that outside the ball of radius $R_0 (m)$, $\sigma_m$ is not only almost everywhere zero but truly zero everywhere. Moreover, we have the fact that the Lagrange multiplier $\lambda_m$ is negative, which follows from the similar arguments in the proof of Step \ref{1} (Lemma \ref{uniform bound of norm in single star case}) showing $\lambda_R<0$, see also Remark \ref{negative energy and Lagrange multiplier for any minimizer}.	
		
		Now we have the existence of the minimizer $\sigma_m$. Since $\sigma_m$ is a minimizer on $m {R}\left(\mathbb{R}^3\right)$, the set of admissible perturbations here is exactly $P_{\infty}\left(\sigma_m\right)$. One can apply (generalized) Lagrange multiplier theorem (\cite[Section 2]{AB71} or \cite[Appendix A]{Che26G1}), together with the fact that there are sufficiently many functions in $P_{\infty}(\sigma_m)$, to obtain the following relations (see also \cite[Lemma 2]{AB71} or \cite[Section 5]{Che26G1}):
		\begin{equation}\label{ELi global chap2}
			E_0^{\prime}\left(\sigma_m\right) \geq \lambda_m \text { a.e., }    
		\end{equation}
		and
		\begin{equation}\label{EL global chap2}
			E_0^{\prime}\left(\sigma_m\right)=\lambda_m  \text { a.e. on } \{\sigma_m(x)>0\}.
		\end{equation}
		
		Since $E_0^{\prime}\left(\sigma_m\right)(x)=A^{\prime}\left(\sigma_m(x)\right)-V_{\sigma_m}(x)$, (\ref{EL global chap2}) implies
		\begin{equation}\label{bound of A' global}
			A^{\prime}\left(\sigma_m(x)\right) \leq V_{\sigma_m}(x)+\lambda_m   \text { a.e. on } \{\sigma_m(x)>0\}
		\end{equation}
		
		
		Due to Lemma \ref{properties of A} (iii), notice $\lambda_m<0$, we can obtain $\sigma_m{ }^{\frac{1}{3}} \leq c V_{\sigma_m}$ a.e. on the set where $\sigma_m \geq K$ for some $K$ large enough. As in \cite[Section 2]{Che26G1}, we can show the bound of $\|\sigma_{m}\|_{L^p}$ and $\left\|V_{\sigma_{m}}\right\|_{L^{\widetilde{p}}}$ with $p$ and $\widetilde{p}$ increasing alternately and reaching infinity in finite steps, and then $V_{\sigma_m}$ is continuously differentiable (bootstrap method). On the other hand (\ref{ELi global chap2})(\ref{EL global chap2}) implies
		\begin{equation}\label{ELA global chap2}
			\sigma_m=\phi \circ\left[V_{\sigma_m}(x)+\lambda_m\right]_{+} \text { a.e. in } \mathbb{R}^3
		\end{equation}	
		where $\phi=\left(A^{\prime}\right)^{-1}$ is continuous. This means $\sigma_m$ has a continuous representative. Moreover, if $P(\rho)$ satisfies \ref{F4}, then $\phi \in C^1((0, \infty))$ by Lemma \ref{properties of A} (vii), which implies $\sigma_m \in C^1\left(\left\{\sigma_m>0\right\}\right)$ as shown in \cite[Section 2]{Che26G1}. (\textbf{Part (vi)}).

		Part (iii) comes from a strong rearrangement inequality in Lieb \cite[Lemma 3]{Lie77} and the generalization by Sobolev of the Hardy-Littlewood theorem on rearrangements of functions \cite{Sob38}. One can also refer to \cite[Proof of Theorem 6.1]{McC06} or \cite[Section 6]{Che26E3}. (\textbf{Part (iii)})
		
		For Part (vii), we already know $\lambda_m<0$. Moreover, (\ref{ELi global chap2}) (\ref{EL global chap2}), together with $\sigma_m$ is continuous, give (\ref{EL}), see for example \cite[Section 5]{McC06} or \cite[Section 2]{Che26G1}. (\textbf{Part (vii)})

		For Part (ii), we only consider the case that complies with polytropic equations of state $P(\sigma)=K\sigma^\gamma$. The general case can be found in  \cite[Theorem 3 (d)]{LY87}\footnote{However, it should be noted that the argument there is set within a quantum mechanical framework and does not include an internal energy term, although McCann mentioned that the results can be applied ``equally well to all $A(\rho)$ consistent with \ref{F1}-\ref{F3}" \cite[Section 3]{McC06}.}. We utilize the scaling relations mentioned in Section \ref{section3-scaling relations}. That is, given $\sigma$ a non-rotating minimizer with mass $1$, then $\sigma_m(x)=\frac{1}{A} \sigma\left(\frac{1}{B} x\right)$ will be a non-rotating minimizer with mass $m$, where $A(m)=m^{-\frac{2}{3 \gamma-4}}$, $B(m)=m^{\frac{\gamma-2}{3 \gamma-4}}$. At the same time, we know $e_0(m)=E_0\left(\sigma_m\right)=E_0\left(\frac{1}{A} \sigma\left(\frac{1}{B} x\right)\right)=m^{\frac{5 \gamma-6}{3 \gamma-4}} e_0$. Notice when $m=0$, such expression also holds since $e_0(0)=0$. By the similar arguments as the proof of Step \ref{2} (Lemma \ref{uniform bound for energy and multipliers}), we know $e_0 \coloneq e_0(1)<0$, which implies $e_0(m)$ decreases continuously from $e_0(0)=0$ and is strictly concave (\textbf{Part (ii)}).
		
		For Part (viii), first the concavity of $e_0(m)$ implies the existence of left-hand and right-hand derivations (since they are monotonic), and they are equal a.e. (since concavity implies Lipschitz continuity and then apply Rademacher's theorem \cite[Subsection 5.8.3]{Eva10}). Thanks to Euler-Lagrange equations (\ref{EL}), we multiply $\sigma_m$ on both sides and integrate, and then we obtain 
		$$\int_{\mathbb{R}^3} E_0^{\prime}\left(\sigma_m\right) \sigma_m\, dx=\lambda_m \cdot m$$ Notice due to Part (iv) and (v), we know $\sigma_m \in P_{\infty}\left(\sigma_m\right)$ thus $\int_{\mathbb{R}^3} E_0^{\prime}\left(\sigma_m\right) \sigma_m\, dx$ makes sense and $$\int_{\mathbb{R}^3} E_0^{\prime}\left(\sigma_m\right) \sigma_m\, dx=\lim\limits_{t \rightarrow 0^{+}} \frac{E_0\left(\sigma_m+t \sigma_m\right)-E_0\left(\sigma_m\right)}{t}=\lim\limits_{t \rightarrow 0^{-}} \frac{E_0\left(\sigma_m\right)-E_0\left(\sigma_m-t \sigma_m\right)}{t}$$ Since $\sigma_m+t \sigma_m$ has mass $(1+t) m$, then 
		$$e_0((1+t) m)=E_0\left(\sigma_{(1+t) m}\right) \leq E_0\left(\sigma_m+t \sigma_m\right)$$ Thus $$\lim\limits_{t \rightarrow 0^{+}} \frac{E_0\left(\sigma_m+t \sigma_m\right)-E_0\left(\sigma_m\right)}{t} \geq \lim \limits_{t \rightarrow 0^{+}} \frac{e_0((1+t) m)-e_0(m)}{t}=m e_0^{\prime}\left(m^{+}\right)$$ 
		Hence we have $\lambda_m \geq e_0^{\prime}\left(m^{+}\right)$. Similarly, we have $\lambda_m \leq e_0^{\prime}\left(m^{-}\right)$. (\textbf{Part (viii)})
		
		For Part (ix), thanks to Part (vi), we know the global continuity of $\sigma_m$, and then we can apply the same arguments as in \cite[Section 2]{Che26G1} and show the existence of $\nabla P\left(\sigma_m\right)$ on the boundary $\partial\left\{\sigma_m>0\right\}$, and verify $\sigma_m$ is a solution to reduced Euler-Poisson equations (\ref{EP'}) in $\mathbb{R}^3$. (\textbf{Part (ix)})
	}
\end{proof}

\begin{remark}
	In the proof of Part (iv) of Theorem \ref{non-rotating}, the relaxation of the last term in (\ref{can be relaxed to half mass}) to $\frac{m}{4 R}$ is not fully justified in \cite{AB71}; here we make this detail explicit for completeness.
\end{remark}

\begin{remark}\label{density less than potential}
	When we prove Theorem \ref{non-rotating} (vi), we mention $\sigma_m{ }^{\frac{1}{3}} \leq c V_{\sigma_m}$ a.e. on the set where $\sigma_m \geq K$ for some $K$ large enough. Actually it turns out even if we do not know  $\lambda_m<0$, $\sigma_m{ }^{\frac{1}{3}} \leq c V_{\sigma_m}$ can still hold true since we can absorb $\lambda_m$ when $\sigma_m$ is large enough. See \cite[Lemma 3]{AB71}.
\end{remark}

\begin{remark}\label{collapse}
	We point out in Theorem \ref{non-rotating} we can show the existence of non-rotating single star no matter how large the mass is. However, if we replace \ref{F3} by \ref{F3'}, then we can only obtain minimizers with total mass not larger than $m(K)$ via variational method for some $m(K)>0$. $m(K)$ here turns out to be the Chandrasekhar mass for the model. If the mass is too large, gravitational collapse may happen \cite[Appendix]{AB71}. 
	
	To clarify, replacing \ref{F3} with \ref{F3'} would require more careful treatment to prove Lemma \ref{non-rotating}. First, observe that Lemma \ref{properties of A} needs to be modified under assumption \ref{F3'} rather than \ref{F3}. In the proof of Lemma \ref{non-rotating}, a potential issue with a large mass is that it may invalidate the earlier assertion that ``$\sigma_m{ }^{\frac{1}{3}} \leq c V_{\sigma_m}$ a.e. on the set where $\sigma_m \geq K$ for some $K$ large enough". Moreover, \ref{F3'} may also lead to a more delicate treatment in the proof of Lemma \ref{uniform bound of norm in single star case} below; see also Remark \ref{F3 importance}. For more discussion, one can see \cite[Section 2]{McC06} or \cite[Section 1,6,8 and Appendix]{AB71}.
	
\end{remark}

\begin{remark}\label{uniform negative bound for derivative of energy}
	Theorem \ref{non-rotating} (ii) actually implies that $e^\prime_0(m^-) <0$ for $m>0$. To prove it, first note $e_0(m)<0$ for $m>0$. Otherwise the monotonicity would imply that $e_0=0$ on $[0, m]$, which contradicts the strict concavity property. Then due to strict concavity, $\forall m>0$, $\forall \epsilon>0$, $\frac{e(m+\epsilon)-e(m)}{\epsilon}<\frac{e(m)-e(0)}{m}=\frac{e(m)}{m}<0$, which implies $e^\prime_0(m^-) \leq \frac{e(m)}{m}<0$.
\end{remark}

\begin{remark}\label{motivaton for uniqueness 1}
	
	Currently, we do not know whether the minimizer $\sigma_m$ with fixed mass $m$ is unique. Further discussion regarding uniqueness (up to translation) will be conducted under additional assumptions later. However, even if multiple minimizers $\sigma_m^1, \sigma_m^2 \ldots \sigma_m^i \ldots$ exist under the present assumptions, due to Theorem \ref{non-rotating} (iv), together with interpolation inequality \cite[Section 4.2]{Bre11}, we know that all of them have uniform $L^p$ bound. Moreover, according to Theorem \ref{non-rotating} (viii) and Remark \ref{uniform negative bound for derivative of energy}, we observe their corresponding Lagrange multipliers $\lambda_m^i$ are also uniformly bounded by $e^\prime_0(m^-)$, which is strictly negative. This observation is applied in \cite[Section 6]{McC06}.
	
\end{remark}

Before showing the existence of a minimizer on $m {R}\left(\mathbb{R}^3\right)$, it is worth noting that if we replace $\frac{4}{3}$ by any smaller number in \ref{F3}, there is no minimizer since the energy may be unbounded below. The idea comes from \cite[Remark in Section 8]{AB71}.

\begin{proposition}[Unbounded below if $\gamma$ <$\frac{4}{3}$] \label{unbounded below}
	If $P(\rho)$ satisfies $\lim\limits_{\rho \rightarrow \infty} \sup P(\rho) \rho^{-\gamma}=K$ instead of \ref{F3}, where $\gamma<\frac{4}{3}, 0 \leq K<\infty$, then the energy $E_0(\rho)$ is unbounded below, that is $\inf\limits_{\rho \in m {R}\left(\mathbb{R}^3\right)} E_0(\rho)=-\infty$. In particular, there is no minimizer on $m {R}\left(\mathbb{R}^3\right)$.
\end{proposition}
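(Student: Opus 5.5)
We will construct a family of admissible densities by mass-preserving dilation and show that along it the gravitational term eventually dominates the internal energy. Fix any $\rho\in m{R}(\mathbb{R}^3)$ that is bounded and compactly supported, for instance $\rho=\frac{m}{|B_1|}\chi_{B_1(0)}$, so that $\rho\in L^1\cap L^\infty$ and $U(\rho)<\infty$, $G(\rho,\rho)\in(0,\infty)$. For $\lambda\geq 1$ set $\rho_\lambda(x):=\lambda^3\rho(\lambda x)$. Then $\rho_\lambda\geq0$, $\int\rho_\lambda\,dx=m$, and $\rho_\lambda\in L^{4/3}$, so $\rho_\lambda\in m{R}(\mathbb{R}^3)$. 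A change of variables in (\ref{G}) gives the exact scaling
\begin{equation*}
G(\rho_\lambda,\rho_\lambda)=\lambda\, G(\rho,\rho).
\end{equation*}
The proof therefore reduces to showing $U(\rho_\lambda)=o(\lambda)$ as $\lambda\to\infty$. Then $E_0(\rho_\lambda)\leq U(\rho_\lambda)-\frac{\lambda}{2}G(\rho,\rho)\to-\infty$, and in particular no minimizer exists.

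To estimate the internal energy, we change variables: $U(\rho_\lambda)=\lambda^{-3}\int_{\mathbb{R}^3}A(\lambda^3\rho(y))\,dy$. We need a growth bound $A(s)\leq C(1+s^{\gamma'})$ with some $\gamma'<\frac43$. From the hypothesis $\limsup_{s\to\infty}P(s)s^{-\gamma}=K<\infty$, there is $s_0$ with $P(s)\leq (K+1)s^{\gamma}$ for $s\geq s_0$. Using (\ref{A}), $A(s)=s\int_0^sP(\tau)\tau^{-2}\,d\tau$. We split the integral at $s_0$. The piece over $[0,s_0]$ is finite by \ref{F2}, since $P(\tau)\tau^{-2}\lesssim\tau^{-2/3}$ near $0$, and it contributes at most $Cs$. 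The remaining piece is bounded by $(K+1)s\int_{s_0}^s\tau^{\gamma-2}\,d\tau$. This is where the main care is needed, because one must distinguish the cases $\gamma>1$, $\gamma=1$ and $\gamma<1$:
\begin{itemize}
\item for $\gamma>1$ the piece is $\lesssim s^{\gamma}$;
\item for $\gamma=1$ it is $\lesssim s\log s$;
\item for $\gamma<1$ it is $\lesssim s$.
\end{itemize}
In every case $A(s)\leq C(s+s^{\gamma'})$ for $s\geq s_0$, with some fixed $\gamma'\in(\max\{\gamma,1\},\frac43)$; the logarithm is absorbed by choosing $\gamma'>1$. For $s\leq s_0$, $A$ is increasing by Lemma \ref{properties of A}, so $A(s)\leq A(s_0)$. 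To avoid an additive constant that would blow up when integrated over all of $\mathbb{R}^3$, we use convexity instead: $A(0)=0$ and $A$ is convex, so $A(s)\leq \frac{A(s_0)}{s_0}s$ for $s\leq s_0$. Combining, $A(s)\leq C(s+s^{\gamma'})$ for all $s\geq0$.

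Plugging this in, and using that $\rho$ is bounded with compact support,
\begin{equation*}
U(\rho_\lambda)\leq \lambda^{-3}C\int\big(\lambda^3\rho+\lambda^{3\gamma'}\rho^{\gamma'}\big)dy = Cm+C\lambda^{3\gamma'-3}\|\rho\|_{L^{\gamma'}}^{\gamma'}.
\end{equation*}
Since $\gamma'<\frac43$ gives $3\gamma'-3<1$, we get $U(\rho_\lambda)=o(\lambda)$, and the conclusion follows. The only delicate step is the uniform bound on $A$: the hypothesis controls $P$ only at infinity, and the low-density behaviour must be handled through \ref{F2} and the convexity of $A$ so that no non-integrable constant appears. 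Everything else is exact scaling.
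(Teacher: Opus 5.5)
Your proof is correct. It is essentially the power-counting argument the paper only sketches in Remark \ref{unboundedness}, carried out with more care; the paper's main proof takes a different route.

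The paper picks a near-extremizer $\rho_\epsilon\in C_c^0(\mathbb{R}^3)$ of the sharp inequality $G(\rho,\rho)\le 2\hat c\,\|\rho\|_{L^1}^{2/3}\|\rho\|_{L^{4/3}}^{4/3}$, which costs a density argument and continuity of $G$ on $L^1\cap L^{4/3}$. It then uses only $A(s)=o(s^{4/3})$ at infinity, and compares $U(\rho_\delta)$ with $\frac12 G(\rho_\delta,\rho_\delta)$ on the common scale $\delta^{-1}\int\rho_\epsilon^{4/3}\,dx$. You instead take an arbitrary fixed profile and the global bound $A(s)\le C(s+s^{\gamma'})$ with $\gamma'<\frac43$. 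This gives $U(\rho_\lambda)=O(1+\lambda^{3\gamma'-3})=o(\lambda)$ against the exact growth $\lambda G(\rho,\rho)$.

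For $\gamma<\frac43$ the near-extremizer is superfluous, as your proof shows. It only becomes the right tool at the critical exponent, where the $\delta^{-1}$ terms of $U$ and $\frac12 G$ are of the same order and the sharp constant $\hat c$ enters the threshold mass. The paper's route needs only $A(s)=o(s^{4/3})$ rather than a power gap, but your argument adapts at once: use $A(s)\le C_\eta s+\eta s^{4/3}$ for arbitrary $\eta>0$.

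Your version is also sounder at two points where the paper's proof is loose.
\begin{enumerate}
\item The asserted bound $\limsup A(\rho)\rho^{-\gamma}\le 3K$ is false in this range, even though only $A(s)=o(s^{4/3})$ is really used. If $P(s)=Ks^\gamma$ for large $s$ with $1<\gamma<\frac43$ and $K>0$, then $A(s)s^{-\gamma}\to K/(\gamma-1)>3K$. For $\gamma<1$ one even has $A(s)s^{-\gamma}\to\infty$. Your case split $\gamma>1$, $\gamma=1$, $\gamma<1$ with $\gamma'>\max\{\gamma,1\}$ handles this correctly.
\item The low-density term $A(S_{\hat\epsilon})\,\mu(\{\rho_\delta<S_{\hat\epsilon}\})$ is infinite as written, since that set contains the exterior of the compact support. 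It has to be restricted to $\{0<\rho_\delta<S_{\hat\epsilon}\}$. Your chord bound $A(s)\le\frac{A(s_0)}{s_0}\,s$ for $s\le s_0$ avoids this cleanly.
\end{enumerate}
The chord bound even follows directly from (\ref{A}), since $A(s)\le s\int_0^{s_0}P(\tau)\tau^{-2}\,d\tau$ for $s\le s_0$. So you need not invoke Lemma \ref{properties of A}, which is stated under \ref{F3}, a hypothesis dropped here.
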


\begin{proof}
	{
		\rm
		Follow the proof of L'Hôpital's rule \cite[Theorem 5.13]{Rud76}, we know $\lim\limits_{\rho \rightarrow \infty} \sup P(\rho) \rho^{-\gamma}=K$ implies $\lim\limits_{\rho \rightarrow \infty} \sup A(\rho) \rho^{-\gamma} \leq 3 K$. then we know $\lim \limits_{\rho \rightarrow \infty} A(\rho) \rho^{-\frac{4}{3}}=$ 0. Therefore, $\forall \hat{\epsilon}>0, \exists S_{\hat{\epsilon}}>0$, such that $A(s) \leq \hat{\epsilon} s^{-\frac{4}{3}}$ when $s \geq S_{\hat{\epsilon}}$. In the statement of Proposition \ref{bound of potential energy}, let $\hat{c}$ be the smallest constant such that $\forall \rho \in L^1 \cap L^{\frac{4}{3}}$, we have 
		$$\left|\int_{\mathbb{R}^3} \rho V_\rho\, dx\right| \leq 2 \hat{c}\left(\int_{\mathbb{R}^3}|\rho|\, dx\right)^{\frac{2}{3}} \int_{\mathbb{R}^3}|\rho|^{\frac{4}{3}}\, dx$$ 
		Then $\hat{c}>0$. Otherwise $\left|\int_{\mathbb{R}^3} \rho V_\rho\, dx\right| \leq 0$ implies $\rho=0$, but $0$ is not the only element in $L^1 \cap L^{\frac{4}{3}}$.
		
		Therefore, $\forall \epsilon \in(0,2 \hat{c})$, $\exists \widetilde{\rho_\epsilon} \in L^1 \cap L^{\frac{4}{3}}$, such that 
		$$\left|\int_{\mathbb{R}^3} \widetilde{\rho_\epsilon} V_{\widetilde{\rho_\epsilon}}\, dx\right|>2\left(\hat{c}-\frac{\epsilon}{2}\right)\left(\int_{\mathbb{R}^3}\left|\widetilde{\rho_\epsilon}\right|\,dx\right)^{\frac{2}{3}}\int_{\mathbb{R}^3}\left|\widetilde{\rho_\epsilon}\right|^{\frac{4}{3}}\, dx$$
		We can assume $\widetilde{\rho_\epsilon} \geq 0$ since we can replace $\widetilde{\rho_\epsilon}$ by $\left|\widetilde{\rho_\epsilon}\right|$ if needed. Since $C_c^0\left(\mathbb{R}^3\right)$ is dense in $L^1$ and $L^{\frac{4}{3}}$, where $C_c^0\left(\mathbb{R}^3\right)$ denotes the set of continuous functions with compact support, by using HardyLittlewood-Sobolev Inequality (Proposition \ref{HLSI}) or Proposition \ref{bound of potential}, we can find a $\rho_\epsilon \in C_c^0\left(\mathbb{R}^3\right)$ such that $\left|\int_{\mathbb{R}^3} \rho_\epsilon V_{\rho_\epsilon}\, dx-\int_{\mathbb{R}^3} \widetilde{\rho_\epsilon} V_{\widetilde{\rho_\epsilon}}\, dx\right|$ can be arbitrarily small. Furthermore, we can have
		$$
		\frac{\left|\int_{\mathbb{R}^3} \rho_\epsilon V_{\rho_\epsilon}\, dx\right|}{\left(\int_{\mathbb{R}^3}\left|\rho_\epsilon\right|\, dx\right)^{\frac{2}{3}} \int_{\mathbb{R}^3}\left|\rho_\epsilon\right|^{\frac{4}{3}}\, dx}>2(\hat{c}-\epsilon)
		$$
		We can also write it as 
		$$
		G\left(\rho_\epsilon, \rho_\epsilon\right) = \left|\int_{\mathbb{R}^3} \rho_\epsilon V_{\rho_\epsilon}\,dx\right|>2(\hat{c}-\epsilon)\left(\int_{\mathbb{R}^3}\left|\rho_\epsilon\right|\, dx\right)^{\frac{2}{3}} \int_{\mathbb{R}^3}\left|\rho_\epsilon\right|^{\frac{4}{3}}\, dx
		$$
		 It implies $\rho_\epsilon \neq 0$. We can also assume $\rho_\epsilon \geq 0$ since we can replace $\rho_\epsilon$ by $\left[\rho_\epsilon\right]_{+}$ if needed. Without loss of generality, we assume $\int_{\mathbb{R}^3} \rho_\epsilon\, dx=1$. Let $\rho_\delta(x)=m \delta^{-3} \rho_\epsilon\left(\delta^{-1} x\right)$, $\delta>0$. Then $\rho_\delta \in m {R}\left(\mathbb{R}^3\right)$ since $C_c^0\left(\mathbb{R}^3\right) \subset L^{\frac{4}{3}}$. Similar as discussions in Section \ref{section3-scaling relations}, we can show $G\left(\rho_\delta, \rho_\delta\right)=\frac{m^2}{\delta} G\left(\rho_\epsilon, \rho_\epsilon\right)$, and
		$$
		\begin{aligned}
			U\left(\rho_\delta\right)&=\int_{\mathbb{R}^3} A\left(\rho_\delta\right)\, dx\\
			&=\int_{\left\{\rho_\delta \geq S_{\hat{\epsilon}}\right\}} A\left(\rho_\delta\right)\, dx+\int_{\left\{\rho_\delta<S_{\hat{\epsilon}\}}\right.} A\left(\rho_\delta\right) \, dx\\
			&\leq \int_{\left\{\rho_\delta \geq S_{\hat{\epsilon}}\right\}} \hat{\epsilon} \cdot \rho_\delta^{\frac{4}{3}}\, dx+\int_{\left\{\rho_\delta<S_{\hat{\epsilon}\}}\right.} A\left(S_{\hat{\epsilon}}\right)\, dx \\
			&\leq \frac{m^{\frac{4}{3}} \hat{\epsilon}}{\delta} \int_{\mathbb{R}^3} \rho_\epsilon^{\frac{4}{3}}\, dx+A\left(S_{\hat{\epsilon}}\right) \mu\left(\left\{\rho_\delta<S_{\hat{\epsilon}}\right\}\right)
		\end{aligned}
		$$
		Here $\mu$ denotes the Lebesgue measure. Notice that given $\hat{\epsilon}$ and $\epsilon$, as $\delta \rightarrow 0$, we have $$\mu\left(\left\{\rho_\delta<S_{\hat{\epsilon}}\right\}\right)=\mu\left(\left\{x \left\lvert\, \rho_\epsilon\left(\delta^{-1} x\right)<\frac{S_{\hat{\epsilon}} \delta^3}{m}\right.\right\}\right)=\delta^3 \mu\left(y \left\lvert\,\left\{\rho_\epsilon(y)<\frac{S_{\hat{\epsilon}} \delta^3}{m}\right\}\right.\right) \rightarrow 0$$
		Then we pick $\epsilon=\frac{\hat{c}}{2}, \hat{\epsilon}=\frac{m^{\frac{2}{3}} \hat{c}}{4}$, and then when $\delta \rightarrow 0$
		$$
		\begin{aligned}
			E_0\left(\rho_\delta\right)&=U\left(\rho_\delta\right)-\frac{G\left(\rho_\delta, \rho_\delta\right)}{2} \\
			&\leq \frac{m^2 \hat{c}}{4 \delta} \int_{\mathbb{R}^3} \rho_\epsilon^{\frac{4}{3}}\, dx+A\left(S_{\hat{\epsilon}}\right) \mu\left(\left\{\rho_\delta<S_{\hat{\epsilon}}\right\}\right)-\frac{m^2 \hat{c}}{2 \delta} \int_{\mathbb{R}^3} \rho_\epsilon^{\frac{4}{3}} \, dx\rightarrow-\infty
		\end{aligned}
		$$
		
		Therefore, $\inf\limits_{\rho \in m {R}\left(\mathbb{R}^3\right)} E_0(\rho)=-\infty$.
	}
\end{proof}

\begin{remark}\label{unboundedness}
	Actually, there is another way to see the unboundedness. From $\lim\limits_{\rho \rightarrow \infty} \sup A(\rho) \rho^{-\gamma} \leq 3 K$, we can notice $A(\rho) \leq(3 K+\hat{\epsilon}) \rho^\gamma$ for large $\rho$, then similarly as above we have 
	$$
	\begin{aligned}
		E_0\left(\rho_\delta\right)&=U\left(\rho_\delta\right)-\frac{G\left(\rho_\delta, \rho_\delta\right)}{2} \\
		&\leq m^\gamma \delta^{-3+3 \gamma}(3 K+\hat{\epsilon}) \int_{\mathbb{R}^3} \rho_\epsilon^\gamma\, dx+A\left(S_{\hat{\epsilon}}\right) \mu\left(\left\{\rho_\delta<S_{\hat{\epsilon}}\right\}\right)-\frac{(\hat{c}-\epsilon) m^2}{\delta} \int_{\mathbb{R}^3}\left|\rho_\epsilon\right|^{\frac{4}{3}}\, dx
	\end{aligned}	
	$$
	Notice $-3+3 \gamma<-1$, thus we can choose suitable $\epsilon$ and $\hat{\epsilon}$ such that $E_0\left(\rho_\delta\right) \rightarrow-\infty$ as $\delta \rightarrow 0$. From this point we can know that the number $\gamma=\frac{4}{3}$ in \ref{F3} is chosen such that $-3+3 \gamma \geq-1$, then \ref{F3} can help the blowing up of $U(\rho)$ to control the gravitational collapse $G(\rho, \rho)$.
\end{remark}

In fact, in the case that blowing up of $U(\rho)$ is too large, with the same strategy we can show the energy may also be unbounded above. That is, if $\frac{4}{3}$ is replaced by any larger number in \ref{F3}, then we will have $U\left(\rho_\delta\right) \gg G\left(\rho_\delta, \rho_\delta\right) \gg 1$.

\begin{corollary}[Unbounded above if $\gamma>\frac{4}{3}$]\label{unbounded above}
	If $P(\rho)$ satisfies $\lim\limits_{\rho \rightarrow \infty} \inf P(\rho) \rho^{-\gamma}=K$ instead of \ref{F3}, where $\gamma>\frac{4}{3}$, $0<K \leq \infty$, then the energy $E_0(\rho)$ is unbounded above, that is $\sup\limits_{\rho \in m {R}\left(\mathbb{R}^3\right)} E_0(\rho)=+\infty$.
\end{corollary}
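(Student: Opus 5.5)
The plan is to run the scaling argument of Proposition \ref{unbounded below} and Remark \ref{unboundedness} with the inequalities reversed. Concentrating a fixed profile into a small ball should make the internal energy $U$ blow up like $\delta^{3-3\gamma}$, while the gravitational energy blows up only like $\delta^{-1}$. Since $\gamma>\frac{4}{3}$ gives $3\gamma-3>1$, the internal energy should win.

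\textbf{Step 1 (lower bound on $A$).} I will show that there are $c>0$ and $S>0$ with $A(s)\geq c\,s^{\gamma}$ for all $s\geq 2S$.
\begin{itemize}
\item Fix a finite constant $0<K'<K$ (this also covers $K=\infty$). By the hypothesis on $\liminf P(\rho)\rho^{-\gamma}$, there is $S$ with $P(\tau)\geq K'\tau^{\gamma}$ for $\tau\geq S$.
\item Since $P\geq 0$, the representation (\ref{A}) gives, for $s\geq S$,
\[
A(s)=s\int_0^s P(\tau)\tau^{-2}\,d\tau\geq sK'\int_S^s\tau^{\gamma-2}\,d\tau=\frac{K'}{\gamma-1}\,s\left(s^{\gamma-1}-S^{\gamma-1}\right).
\]
\item Because $\gamma>1$, this is at least $c\,s^{\gamma}$ once $s\geq 2S$, with $c=\frac{K'}{\gamma-1}\left(1-2^{1-\gamma}\right)$.
\end{itemize}
I expect this to be the only point needing care, namely passing from the $\liminf$ condition on $P$ to a pointwise power lower bound on $A$. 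The explicit integral form of $A$ handles it without a L'Hôpital-type argument.

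\textbf{Step 2 (test family).} Let $B=B_1(0)$ and $\rho_\delta:=\frac{m}{|B|\delta^{3}}\mathbf{1}_{B_\delta(0)}$. Each $\rho_\delta$ is nonnegative, bounded and compactly supported, with mass $m$. Hence $\rho_\delta\in mR(\mathbb{R}^3)$, and $U(\rho_\delta)<\infty$ because $A$ is continuous. By the scaling used in the proof of Proposition \ref{unbounded below},
\[
G(\rho_\delta,\rho_\delta)=\frac{m^2}{\delta}\,G(\rho_1,\rho_1),
\]
and $G(\rho_1,\rho_1)$ is finite by Remark \ref{finite gravitational interaction energy}. Since $\rho_\delta$ is constant on its support,
\[
U(\rho_\delta)=|B|\,\delta^{3}\,A\!\left(\frac{m}{|B|\delta^3}\right).
\]
For $\delta$ small enough that $m/(|B|\delta^3)\geq 2S$, Step 1 gives
\[
U(\rho_\delta)\geq c\,|B|^{1-\gamma}m^{\gamma}\,\delta^{3-3\gamma}.
\]

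\textbf{Step 3 (conclusion).} Combining Steps 1 and 2,
\[
E_0(\rho_\delta)\geq c\,|B|^{1-\gamma}m^{\gamma}\,\delta^{3-3\gamma}-\frac{m^2}{2\delta}\,G(\rho_1,\rho_1).
\]
Since $\gamma>\frac{4}{3}$, we have $3\gamma-3>1$, so the first term dominates as $\delta\to 0^+$ and $E_0(\rho_\delta)\to+\infty$. Therefore $\sup_{mR(\mathbb{R}^3)}E_0=+\infty$ for every $m>0$.

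The case $m=0$ is degenerate: the only admissible density is $\rho=0$. I will note that the statement is meant for $m>0$.
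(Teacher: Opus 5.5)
Your proof is correct and follows the paper's approach. You concentrate a fixed profile so that $U(\rho_\delta)\gtrsim\delta^{3-3\gamma}$ while $G(\rho_\delta,\rho_\delta)\lesssim\delta^{-1}$, and use $3\gamma-3>1$. Your choice of an indicator test function and your direct bound $A(s)\geq c\,s^{\gamma}$ from the integral representation of $A$ are cleaner versions of the paper's level-set and L'Hôpital-type estimates. The only slip is cosmetic: your $\rho_1$ already has mass $m$, so the scaling is $G(\rho_\delta,\rho_\delta)=\delta^{-1}G(\rho_1,\rho_1)$ without the extra factor $m^2$, which changes a constant but not the conclusion.
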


\begin{proof}
	{
		\rm
		Thanks to the arguments above, we can first find a $\rho \in C_c^0\left(\mathbb{R}^3\right)$ with mass 1, and set $\rho_\delta(x)=m \delta^{-3} \rho\left(\delta^{-1} x\right)$. If $K<\infty$, notice $$\left\{x \, \middle| \, \rho_\delta \geq S_{\hat{\epsilon}}\right\}=\left\{x \, \middle| \, \rho\left(\delta^{-1} x\right) \geq \frac{S_{\hat{e}} \delta^3}{m}\right\}$$ 
		by a change of variable, we know for $\delta$ large enough, we have 
		$$U\left(\rho_\delta\right) \geq m^\gamma \delta^{-3+3 \gamma}(3 K-\hat{\epsilon}) \int_{\left\{\rho \geq \frac{S_{\hat{\epsilon} } \delta^3}{m}\right\}} \rho^\gamma\, dx \geq m^\gamma \delta^{-3+3 \gamma}(3 K-\hat{\epsilon}) \int_{\{\rho \geq C\}} \rho^\gamma\, dx$$ Notice we can find a $C$ such that $\int_{\{\rho \geq C\}} \rho^\gamma\, dx>0$ due to the same arguments in \cite[Section 4]{Che26G1}. By Proposition \ref{bound of potential energy}, together with $\gamma>\frac{4}{3}$, we know as $\delta \rightarrow 0$, $$E_0\left(\rho_\delta\right)=U\left(\rho_\delta\right)-\frac{G\left(\rho_\delta, \rho_\delta\right)}{2} \geq m^\gamma \delta^{-3+3 \gamma}(3 K-\hat{\epsilon}) \int_{\{\rho \geq C\}} \rho^\gamma\, dx-\frac{\hat{c} m^2}{\delta} \int_{\mathbb{R}^3}|\rho|^{\frac{4}{3}}\, dx \rightarrow+\infty$$
		When $K=\infty$, we can replace $(3 K-\hat{\epsilon})$ by any number $N$ and $S_{\hat{\epsilon}}$ by $S_N$ analogously, where $S_N$ is a number such that $A(\rho) \geq N \rho^\gamma$ when $\rho>S_N$, then we can obtain the same result.
	}
\end{proof}

\begin{remark}
	In fact, the energy may be also unbounded below or above even if we replace ${R}\left(\mathbb{R}^3\right)$ by ${R}_p\left(\mathbb{R}^3\right):=\left\{\rho \in L^p\left(\mathbb{R}^3\right) \mid \rho \geq 0, \int_{\mathbb{R}^3} \rho\, dx=1\right\}$ for some $p\geq 1$, since $C_c^0\left(\mathbb{R}^3\right) \subset L^p$ for any $p \geq 1$, and then $\rho_\epsilon \in {R}_p\left(\mathbb{R}^3\right)$.
\end{remark}

\begin{remark}
	If \ref{F3} is replaced by the assumption $\lim\limits_{\rho \rightarrow \infty} \inf P(\rho) \rho^{-\frac{4}{3}}=K>0$, Auchmuty and Beals \cite[Section 1 and 6]{AB71} show there is a constant $M_0>0$ such that $\forall 0 \leq m<M_0$ there is a minimizer on $m {R}\left(\mathbb{R}^3\right)$. If $\lim\limits_{\rho \rightarrow \infty} \inf P(\rho) \rho^{-\frac{4}{3}}=K>0$ is strengthened to $\lim\limits_{\rho \rightarrow \infty} P(\rho) \rho^{-\frac{4}{3}}=K>0$, then $ \forall m>M_0$, the energy is unbounded below on $m {R}\left(\mathbb{R}^3\right)$ \cite[Remark in Section 8]{AB71}. Moreover, they \cite[Remark in Section 6, and Appendix]{AB71} also show that if $P$ is the function occurring in the theory of white dwarf stars (relativistically degenerate gases), $M_0$ turns out to be the same as the limiting mass $M_3$ in Chandrasekhar's theory \cite{Cha39}.
\end{remark}

We now prove the four steps (lemmas) that are mentioned during the earlier proof of Theorem \ref{non-rotating}. We assume $m>0$, and just provide proof outlines, explain some ideas and add some details not mentioned explicitly in \cite{AB71, LY87}. One can check the proofs in \cite{AB71} \cite{LY87} and references therein, to understand some step(s) omitted in the following.

\begin{lemma}[Uniform bound of $\sigma_R$ in $L^{\infty}\left(\mathbb{R}^3\right)$ \text{\cite{AB71}}] \label{uniform bound of norm in single star case}
	Assume $\sigma_R$ is the energy minimizer in $W_R$, which is given in (\ref{W_R}). Then the $L^{\infty}\left(\mathbb{R}^3\right)$ bound of $\sigma_R$ is uniform, i.e., $\exists C>0$, such that $\forall R>0$, $\left\|\sigma_R\right\|_{L^{\infty}\left(\mathbb{R}^3\right)} \leq C$.
\end{lemma}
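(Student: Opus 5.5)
We follow the strategy of Auchmuty and Beals. The plan is to derive a constrained Euler--Lagrange inequality for $\sigma_R$ on $W_R$, convert it into a pointwise bound of the form $\sigma_R^{1/3}\lesssim V_{\sigma_R}$ on the set where $\sigma_R$ is large, and then bound $V_{\sigma_R}$ uniformly in $R$ through a priori energy bounds together with a bootstrap. Throughout we take $R$ large, since for bounded $R$ the constraint $\|\sigma_R\|_{L^\infty}\le R$ already gives the bound.

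\textbf{Step 1: a uniform energy bound.} We fix a single axially symmetric, centered, bounded, compactly supported profile $\rho^\ast\in mR(\mathbb{R}^3)$. It lies in $W_R$ for all large $R$, so $E_0(\sigma_R)\le E_0(\rho^\ast)=:C_1$. We then combine \ref{F3} with the estimate of Proposition \ref{bound of potential energy}, namely $G(\rho,\rho)\le 2\hat c\, m^{2/3}\int\rho^{4/3}$. Since $A(s)\ge N s^{4/3}$ for $s\ge S_N$ with $N$ as large as we like, we choose $N\gg \hat c m^{2/3}$ and obtain uniform bounds on $U(\sigma_R)$, $G(\sigma_R,\sigma_R)$ and $\|\sigma_R\|_{L^{4/3}}$. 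The part where $\sigma_R<S_N$ is controlled by its $L^1$ and $L^\infty$ norms, both of which are bounded independently of $R$.

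\textbf{Step 2: the constrained Euler--Lagrange relation.} We perturb $\sigma_R$ inside $W_R$ by moving mass. The perturbation takes mass from a set where $\sigma_R>0$ and places it where $\sigma_R<R$ inside $B_R(0)$. To keep admissibility, it must preserve axial symmetry and put equal mass in $\{z>0\}$ and $\{z<0\}$, so we use reflection-symmetric pairs of perturbations. Lemma \ref{diff. of energy} then yields a multiplier $\lambda_R$ with the following property: on $\{0<\sigma_R<R\}$ we have $A'(\sigma_R)-V_{\sigma_R}=\lambda_R$ a.e., and on $\{\sigma_R=R\}$ we have $A'(\sigma_R)-V_{\sigma_R}\le\lambda_R$. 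Note that the cap only allows decreasing $\sigma_R$ there, so the inequality goes the favourable way. It follows that $A'(\sigma_R)\le V_{\sigma_R}+\lambda_R$ a.e. on $\{\sigma_R>0\}$. We also need a lower bound on $\lambda_R$ that is uniform in $R$; in fact $\lambda_R<0$ would suffice. We plan to obtain it by testing the relation against $\sigma_R$ itself, which gives $\int A'(\sigma_R)\sigma_R - G(\sigma_R,\sigma_R) \le \lambda_R m$. Alternatively, we use the fact that $V_{\sigma_R}$ is small on a large set carrying positive mass, which follows from the $L^{4/3}$ bound and a Chebyshev argument. In either case the bound is (after possible adjustments) $\lambda_R\le C_2$ with $C_2$ independent of $R$, and this is all we need. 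As in Remark \ref{density less than potential}, a bounded $\lambda_R$ can be absorbed.

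\textbf{Step 3: the bootstrap.} By Lemma \ref{properties of A} (iii), $A'(s)\ge s^{1/3}$ once $s\ge K_0$. Therefore, on $\{\sigma_R\ge K_1\}$ with $K_1$ large (absorbing $C_2$), we get $\sigma_R^{1/3}\le 2V_{\sigma_R}$. We begin from the uniform bounds $\sigma_R\in L^1\cap L^{4/3}$. Hardy--Littlewood--Sobolev (Proposition \ref{HLSI}, Proposition \ref{bound of potential}) gives $V_{\sigma_R}\in L^{q}$ for $q\in(3,12)$. Then $\sigma_R\le K_1+8V_{\sigma_R}^3$ gives $\sigma_R\in L^{q/3}$ with an improved exponent. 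Iterating this, as in \cite[Section 2]{Che26G1}, the exponents increase by a fixed amount each time, and after finitely many steps $\sigma_R\in L^p$ with $p>3/2$, uniformly in $R$. At that point $V_{\sigma_R}\le \|\sigma_R\|_{L^1}+\|\sigma_R\|_{L^p}\,\||x|^{-1}\mathbf 1_{B_1}\|_{L^{p'}}$ is bounded uniformly in $L^\infty$, and hence $\sigma_R\le \max\{K_1, 8\|V_{\sigma_R}\|_\infty^3\}=:C$.

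\textbf{Main obstacle.} The delicate point is Step 2: deriving a one-sided Euler--Lagrange inequality that respects the $L^\infty$ cap, the support constraint and the centering/symmetry constraints, together with an $R$-independent control of $\lambda_R$. The second issue is the $3$ versus $4/3$ exponent bookkeeping in the bootstrap of Step 3, which relies essentially on \ref{F3}; compare Remark \ref{collapse}.
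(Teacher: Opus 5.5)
Your Steps 1 and 3 follow the paper: the uniform energy and $L^{4/3}$ bounds come from (F3) and the potential-energy estimate, and then you bootstrap from $\sigma_R^{1/3}\le cV_{\sigma_R}$ on $\{\sigma_R\ge K\}$. Step 2, however, has a genuine gap, because it never produces the bound that Step 3 uses. To absorb the multiplier in $A'(\sigma_R)\le V_{\sigma_R}+\lambda_R$ with an $R$-independent threshold $K_1$, you need an $R$-uniform \emph{upper} bound $\lambda_R\le C_2$.

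Testing your relation against $\sigma_R$ gives $\int\sigma_R A'(\sigma_R)\,dx-G(\sigma_R,\sigma_R)\le\lambda_R m$. This is a \emph{lower} bound on $\lambda_R$ (for instance $\lambda_R\ge -G(\sigma_R,\sigma_R)/m$). Even if equality held, you would still need a uniform bound on $\int\sigma_RA'(\sigma_R)\,dx=\int\bigl(A(\sigma_R)+P(\sigma_R)\bigr)dx$, and $U(\sigma_R)$ does not control this for general $P$. Your alternative has the same orientation problem. Smallness of $V_{\sigma_R}$ on the support, inserted into $A'(\sigma_R)\le V_{\sigma_R}+\lambda_R$, again only bounds $\lambda_R$ from below; for an upper bound the size of $V_{\sigma_R}$ is irrelevant, since it enters with a favourable sign. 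So the claim ``in either case $\lambda_R\le C_2$'' is unsupported, and Step 3 has no uniform absorption threshold.

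The missing idea is the other half of the constrained Euler--Lagrange relation. Your perturbations, which add mass where $\sigma_R<R$ inside $B_R(0)$, already yield it, but you never state or use it: $A'(\sigma_R)-V_{\sigma_R}\ge\lambda_R$ a.e.\ on $\{|x|<R,\ \sigma_R<R\}$, vacuum included. Since $|\{\sigma_R>1\}|\le m$ while $|B_R(0)|\to\infty$, for large $R$ there is a positive-measure subset of $B_R(0)$ on which $\sigma_R\le 1$. There $\lambda_R\le A'(1)-V_{\sigma_R}\le A'(1)$, which already suffices for your Step 3.

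The paper does exactly this, but at points where $\sigma_R\lesssim mR^{-3}$. It uses $V_{\sigma_R}\ge \frac{m}{2R}$ on $B_R(0)$ and $A'(s)=o(s^{1/3})$ as $s\to 0$ to get $\lambda_R\le o(R^{-1})-\frac{m}{2R}<0$, and it reuses this sign in the next lemma.

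A smaller point: $\sigma_R$ is not known to be symmetric under $z\mapsto -z$, so reflection-symmetric pairs of perturbations need not be admissible. Preserving the two half-masses separately generally gives one multiplier per half-space. The argument above still bounds each of them from above, since each half-ball also has volume tending to infinity.
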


\begin{remark}
	Here we assume that the inequality $\left\|\sigma_{R}\right\|_{L^{\infty}\left(\mathbb{R}^{3}\right)} \leq C$ holds automatically if $W_{R}$ is empty. But we will also show $W_R$ is not empty when $R$ is large enough in the following proof.
\end{remark}

\begin{proof}[Proof of Lemma \ref{uniform bound of norm in single star case}]
	{
		\rm
		Let $R_0=\left(\frac{3 m}{4 \pi}\right)^{\frac{1}{4}}$, then 
		$W_R \subset L^1 \cap L^{\infty}$ and it is not empty when $R>R_0$, since we can pick $\widetilde{\rho}(x)=\left\{\begin{array}{ll}\frac{3 m}{4 \pi R_0^3},&|x|<R_0 \\ 0,&|x| \geq R_0\end{array}\right.$ and check $\widetilde{\rho}$ is in $W_R$. Given $R>R_0$, we have $E_0\left(\sigma_R\right) \leq E_0(\widetilde{\rho})<\infty$. Thanks to Proposition \ref{bound of potential energy}, we have 
		$$\int_{\mathbb{R}^3} A\left(\sigma_R\right)\, dx \leq E_0(\widetilde{\rho})+C m^{\frac{2}{3}} \int_{\mathbb{R}^3} \sigma_R^{\frac{4}{3}}\, dx$$ 
		where $m=\int_{\mathbb{R}^3} \sigma_R\, dx$. Let $\widetilde{C}=C m^{\frac{2}{3}}$, by Lemma \ref{properties of A} (ii), $\exists s_0>0$, such that $\forall s \geq s_0$, $(\widetilde{C}+2) s^{\frac{4}{3}} \leq A(s)$. Notice the mass of $\sigma_R$ is $m$. Then
		$$
		\begin{aligned}
			\int_{\mathbb{R}^3}\left(\sigma_R\right)^{\frac{4}{3}}\, dx&=\int_{\left\{x \mid \sigma_R(x)<s_0\right\}}\left(\sigma_R\right)^{\frac{1}{3}} \cdot \sigma_R\, dx+\int_{\left\{x \mid \sigma_R(x) \geq s_0\right\}}\left(\sigma_R\right)^{\frac{4}{3}}\, dx \\
			& \leq s_0^{\frac{1}{3}} \cdot m+\frac{1}{(\widetilde{C}+2)} \int_{\mathbb{R}^3} A\left(\sigma_R\right)\, dx
		\end{aligned}
		$$

		Then claim: $\int_{\mathbb{R}^3}\left(\sigma_R\right)^{\frac{4}{3}} \, dx\leq \max \left\{(\widetilde{C}+2) s_0^{\frac{1}{3}} \cdot m, E_0(\widetilde{\rho})\right\}$. In fact, if $\int_{\mathbb{R}^3}\left(\sigma_R\right)^{\frac{4}{3}}\, dx \leq (\widetilde{C}+2) s_0^{\frac{1}{3}} \cdot m$, we are done. If $\int_{\mathbb{R}^3}\left(\sigma_R\right)^{\frac{4}{3}}\, dx>(\widetilde{C}+2) s_0^{\frac{1}{3}} \cdot m$, then 
		$$
		(\widetilde{C}+1) \int_{\mathbb{R}^3}\left(\sigma_R\right)^{\frac{4}{3}}\,dx \leq(\widetilde{C}+2) \int_{\mathbb{R}^3}\left(\sigma_R\right)^{\frac{4}{3}}\,dx-(\widetilde{C}+2) s_0^{\frac{1}{3}} \cdot m \leq \int_{\mathbb{R}^3} A\left(\sigma_R\right)\, dx \leq E_0(\widetilde{\rho})+\widetilde{C} \int_{\mathbb{R}^3}\left(\sigma_R\right)^{\frac{4}{3}}\, dx
		$$ which again implies $\int_{\mathbb{R}^3}\left(\sigma_R\right)^{\frac{4}{3}}\, dx \leq E_0(\widetilde{\rho}) \leq \max \left\{(\widetilde{C}+2) s_0^{\frac{1}{3}} \cdot m, E_0(\widetilde{\rho})\right\}$.
		
		We claim $\sigma_R$ is continuous in $B_R(0)$ and the following inequalities hold true (notice $\sigma_R(x)>0$ implies $|x|<R$):
		\begin{equation} \label{ELg constraint chap 2}
			E_0{ }^{\prime}\left(\sigma_R\right) \geq \lambda_R \text { where }|x|<R \text { and } \sigma_R(x)<R 
		\end{equation}
		\begin{equation} \label{ELl constraint chap 2}
			E_0{ }^{\prime}\left(\sigma_R\right) \leq \lambda_R \text { where } \sigma_R(x)>0 
		\end{equation}
		Recall we have $E_0^{\prime}\left(\sigma\right)(x)=A^{\prime}\left(\sigma(x)\right)-V_{\sigma}(x)$ by Lemma \ref{diff. of energy}, then we can also write
		\begin{equation} \label{ELAl constraint chap 2}
			A^{\prime}\left(\sigma_R(x)\right) \leq V_{\sigma_R}(x)+\lambda_R \text { where } \sigma_R(x)>0
		\end{equation}
		
		In fact, we first apply (generalized) Lagrange multiplier theorem \cite[Appendix A]{Che26G1} to obtain (\ref{ELg constraint chap 2})(\ref{ELl constraint chap 2})(\ref{ELAl constraint chap 2}) holds true a.e. in the region described above, then we get
		\begin{equation} \label{ELminmax}
			A^{'}\left( {\sigma_{R}(x)} \right) = {\min \left\{ {A^{'}(R),{\max\left\{ {V_{\sigma_{R}}(x) + \lambda_{R},0} \right\}}} \right\}} \text{ a.e. in } B_R(0)
		\end{equation}
		Although the sign of $\lambda_R$ is currently unknown (it will be shown negative later), we can still show $\sigma_R{ }^{\frac{1}{3}} \leq c V_{\sigma_R}$ a.e. on the set where $\sigma_R \geq K_R$ for some $K_R>0$ by absorbing $\lambda_R$ when $\sigma_R$ is large enough \cite[Lemma 3]{AB71}. Similar to the arguments in \cite[Section 2]{Che26G1}, we can show $\sigma_R$ is continuous in $B_R(0)$ and thereby replace almost everywhere (a.e.) by everywhere. 
		
		One may consider using arguments similar to those above (bootstrap method) to show Lemma \ref{uniform bound of norm in single star case}. But we need to realize that this time $\sigma_R^{\frac{1}{3}} \leq c V_{\sigma_R}$ a.e. on the set where $\sigma_R \geq K_R$, here $K_R$ depends on $\lambda_R$ thus may not be a uniform constant.
		
		However, this problem can be solved. We claim $K_R$ is indeed a uniform constant for sufficiently large $R$. We can show $\lambda_R<0$ for sufficiently large $R$. To show $\lambda_R<0$, we choose a suitable region $U_R$ where (\ref{ELg constraint chap 2}) holds that $\lambda_R \leq E_0{ }^{\prime}\left(\sigma_R\right)$, analyze the behaviors of terms in $E_0{ }^{\prime}\left(\sigma_R\right)$ as $R \rightarrow \infty$, and obtain $\lambda_R \leq o\left(R^{-1}\right)-c m R^{-1} <0$ on $U_R$. For example, if $x \in B_R(0)$, since $\sigma_R=0$ outside $B_R(0)$, we have
		\begin{equation}\label{can be relaxed to half mass}
			V_{\sigma_R}(x)=\int_{\mathbb{R}^3} \frac{\sigma_R(y)}{|x-y|} \,dy=\int_{B_R(0)} \frac{\sigma_R(y)}{|x-y|} \,dy \geq \frac{1}{2 R} \int_{B_R(0)} \sigma_R(y) \,dy=\frac{m}{2 R}
		\end{equation}
		which gives us the last term $c m R^{-1}$. One can also see how to get the term $o(R^{-1})$ in \cite[Lemma 5]{AB71}. After showing $\lambda_R<0$, thanks to (\ref{ELAl constraint chap 2}) we have $A^{\prime}\left(\sigma_R\right) \leq V_{\sigma_R}$ for $R$ large enough. Recall Lemma \ref{properties of A} (iii) tells us $\sigma_R ^{\frac{1}{3}} \leq A^{\prime}(\sigma_R)$ when $\sigma_R > N$ for some $N>0$. It implies we can take $K_R = N$ such that it is a uniform constant for $R \geq R_1$, where $R_1$ is sufficiently large. Therefore, we can truly apply bootstrap method to prove $\exists C_1>0$, such that $\left\|\sigma_R\right\|_{L^{\infty}\left(\mathbb{R}^3\right)} \leq C_1$ for $R \geq R_1$. Take $C= \max \{C_1, R_1 \}$, we know $\left\|\sigma_R\right\|_{L^{\infty}\left(\mathbb{R}^3\right)} \leq C$ for $R>0$. This finishes the proof of Lemma \ref{uniform bound of norm in single star case} (Step \ref{1}).
	}
\end{proof}

\begin{remark}\label{F3 importance}
	\ref{F3} is important to show the uniform bound of $\sigma_R$ in $L^{\frac{4}{3}}\left(\mathbb{R}^3\right)$. It helps to show $\int_{\mathbb{R}^3}\left(\sigma_R\right)^{\frac{4}{3}}\,dx \leq s_0^{\frac{1}{3}} \cdot m+\frac{1}{(\widetilde{C}+2)} \int_{\mathbb{R}^3} A\left(\sigma_R\right)\,dx$. If we replace $\frac{4}{3}$ by any smaller number in \ref{F3}, we could not obtain such uniform bound since we would not have $(\widetilde{C}+1) \int_{\mathbb{R}^3}\left(\sigma_R\right)^{\frac{4}{3}}\,dx \leq E_0(\widetilde{\rho})+\widetilde{C} \int_{\mathbb{R}^3}\left(\sigma_R\right)^{\frac{4}{3}}\,dx$ as above, as Proposition \ref{unbounded below} suggests.
	
\end{remark}

To show spt $\sigma_R$ is contained in $\overline{B_{R_1}(0)}$ (Step \ref{3}), thanks to (\ref{ELAl constraint chap 2}), it's sufficient to show $V_{\sigma_R}(x) \leq-\lambda_R$ outside the ball. To do so, we strengthen the result $\lambda_R<0$ and obtain $\lambda_R \leq l<0$ for all large $R$. We first estimate the energy $E_0\left(\sigma_R\right)$, which turns out to satisfies $E_0\left(\sigma_R\right) \leq a<0$ as well.

\begin{lemma}[Uniform Bound for Energies and Lagrange Multipliers {\cite{AB71}}] \label{uniform bound for energy and multipliers}
	$\exists \widetilde{R}_0, \alpha, l$, such that $\forall R>\widetilde{R}_0$, $E_0\left(\sigma_R\right) \leq \alpha<0$ and $\lambda_R \leq l<0$, where $\sigma_R$ is the energy minimizer in $W_R$, and $\lambda_R$ is the corresponding Lagrange multiplier.
\end{lemma}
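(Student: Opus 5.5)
The proof has two parts: first a uniform negative upper bound on the energies $E_0(\sigma_R)$, then a transfer of that bound to the Lagrange multipliers $\lambda_R$ through the Euler--Lagrange relation (\ref{ELminmax}).

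\textbf{Energy bound.} I would exhibit a fixed competitor with strictly negative energy that lies in $W_R$ for every large $R$. Take $\widetilde{\rho}$ to be the uniform ball of radius $r$ and mass $m$, dilated by the mass-preserving family $\rho_\delta(x)=\delta^{-3}\widetilde{\rho}(\delta^{-1}x)$.

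The gravitational term scales exactly: $G(\rho_\delta,\rho_\delta)=\delta^{-1}G(\widetilde{\rho},\widetilde{\rho})$.

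For the internal energy, \ref{F2} gives, for each $\epsilon>0$, some $s_\epsilon$ with $A(s)\le \epsilon s^{4/3}$ for $s\le s_\epsilon$. For $\delta$ large, $\rho_\delta\le s_\epsilon$ everywhere, so
\[
U(\rho_\delta)\le \epsilon\,\delta^{-1}\int_{\mathbb{R}^3}\widetilde{\rho}^{4/3}\,dx .
\]
Choosing $\epsilon$ small compared with $G(\widetilde{\rho},\widetilde{\rho})/\bigl(2\int\widetilde{\rho}^{4/3}\,dx\bigr)$ yields a fixed $\delta_0$ and a fixed $\rho_{\delta_0}$ with $E_0(\rho_{\delta_0})=:\alpha<0$.

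This $\rho_{\delta_0}$ is spherically symmetric, hence axially symmetric and centered. Its sup norm and support radius are fixed numbers, so $\rho_{\delta_0}\in W_R$ for all $R>\widetilde{R}_0$, where $\widetilde{R}_0$ exceeds both. Minimality then gives $E_0(\sigma_R)\le E_0(\rho_{\delta_0})=\alpha$ for all such $R$.

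\textbf{Multiplier bound.} Multiply (\ref{ELminmax}) by $\sigma_R$ and integrate. On the set where $\sigma_R<R$, the relation reads $A'(\sigma_R)=V_{\sigma_R}+\lambda_R$ on $\{\sigma_R>0\}$. By Lemma \ref{uniform bound of norm in single star case}, $\sigma_R\le C<R$ for large $R$, so the cap $A'(R)$ is never active and this holds on all of $\{\sigma_R>0\}$. Integrating gives
\[
\lambda_R m=\int_{\mathbb{R}^3} A'(\sigma_R)\sigma_R\,dx-G(\sigma_R,\sigma_R).
\]
By Lemma \ref{properties of A} (iv), $A'(s)s=A(s)+P(s)$, hence
\[
\lambda_R m = U(\sigma_R)+\int_{\mathbb{R}^3} P(\sigma_R)\,dx-G(\sigma_R,\sigma_R)=2E_0(\sigma_R)+\Bigl(\int_{\mathbb{R}^3} P(\sigma_R)\,dx-U(\sigma_R)\Bigr).
\]
It remains to show that the bracket is not too positive. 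I would argue this step by a virial (scaling) argument rather than pointwise. Consider $t\mapsto E_0(\sigma_R^{t})$ with $\sigma_R^t(x)=t^{3}\sigma_R(tx)$ for $t$ near $1$. For $t\ge1$ the support shrinks and the sup norm grows only to $t^3C<R$, so $\sigma_R^t$ stays admissible for $R$ large. One-sided minimality in $t$ then gives, formally,
\[
3\int_{\mathbb{R}^3}\bigl(A'(\sigma_R)\sigma_R-A(\sigma_R)\bigr)dx\ \ge\ G(\sigma_R,\sigma_R), \qquad\text{i.e.}\qquad 3\int_{\mathbb{R}^3}P(\sigma_R)\,dx\ \ge\ G(\sigma_R,\sigma_R).
\]
Only an inequality is available, because dilations with $t<1$ may exit $B_R(0)$.

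This inequality points the wrong way for a cheap bound, which is why I expect this step to be the main obstacle. As a fallback, I would use the direct pointwise route of \cite[Lemma 5]{AB71}. Pick a point $x_R$ in $B_R(0)$ where $\sigma_R(x_R)$ is small but (\ref{ELg constraint chap 2}) applies, say near the boundary of the support. There $\lambda_R\le A'(\sigma_R(x_R))-V_{\sigma_R}(x_R)$, and one needs $V_{\sigma_R}(x_R)$ bounded below uniformly.

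A uniform lower bound on $V_{\sigma_R}$ requires a fixed fraction of the mass to stay in a fixed ball. This confinement should follow from the uniform negative energy. If mass dispersed to infinity, splitting $\sigma_R$ into far-apart pieces would make $G$ small by Proposition \ref{bound of potential energy} together with the uniform $L^{4/3}$ bound. Since $U\ge0$, this would force $E_0(\sigma_R)\ge -o(1)$, contradicting $E_0(\sigma_R)\le\alpha<0$.

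With the mass confined, say $\int_{B_{r_0}}\sigma_R\,dx\ge m/2$, I would choose $x_R$ with $|x_R|\le 2r_0$ and $\sigma_R(x_R)\le\eta$. Such a point exists because $\sigma_R$ is continuous and its $L^{4/3}$ norm is bounded. Then
\[
\lambda_R\le A'(\eta)-\frac{m}{6r_0}=:l<0
\]
for $\eta$ small, using $A'(0)=0$ and the continuity of $A'$ from Lemma \ref{properties of A}. The confinement step is the delicate one, and I would expect it to require a concentration-compactness style dichotomy argument.
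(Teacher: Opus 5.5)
Your energy bound is correct and is essentially the paper's argument. You use a dilated test density for which (F2) makes $U$ of lower order than $G$, fixed once so that it lies in every $W_R$ for large $R$.

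The gap is in the multiplier bound. You are right to drop the virial route, since it only bounds $\int P(\sigma_R)\,dx$ from below. Note also that the $t$-derivative at $t=1$ is $3\int P(\sigma_R)\,dx-\frac12 G(\sigma_R,\sigma_R)$, not $3\int P(\sigma_R)\,dx-G(\sigma_R,\sigma_R)$.

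Your fallback rests on the confinement claim $\int_{B_{r_0}}\sigma_R\,dx\ge m/2$ with $r_0$ fixed. The contradiction you sketch only excludes \emph{vanishing}. Suppose instead $\sigma_R$ splits into far-apart heavy pieces, for instance two clumps of mass $m/2$ near $z=\pm L$; this is compatible with axial symmetry and with the centering condition in $W_R$. Each piece keeps its own self-interaction, so $G(\sigma_R,\sigma_R)$ does not become small and $E_0(\sigma_R)$ stays below a negative constant. Ruling out this dichotomy would need strict subadditivity of the constrained minimal energies, which you do not supply. So the step everything depends on is missing.

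The final step is also flawed as stated. In a fixed ball $B_{2r_0}$ you can only guarantee a point with $\sigma_R\le m/|B_{2r_0}|$, not $\sigma_R\le\eta$ for arbitrarily small $\eta$; neither continuity nor the $L^{4/3}$ bound helps. The radius must therefore grow as $\eta\to0$, and then $A'(0)=0$ is not enough. What closes the estimate is the rate $A'(s)=o(s^{1/3})$ from Lemma~\ref{properties of A}~(iii), i.e.\ from (F2), which beats the $1/\text{radius}$ decay of the potential.

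The missing idea is that no confinement near the origin is needed. Non-vanishing suffices, and it follows directly from the negative energy; this is the paper's route.

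From $E_0(\sigma_R)\le\alpha$ you get $-\alpha\le\frac12\int\sigma_R V_{\sigma_R}\,dx\le\frac m2\|V_{\sigma_R}\|_{L^\infty}$. Split $V_{\sigma_R}(x)$ into the contributions from $|y-x|<1$, $1<|y-x|<r$ and $|y-x|>r$, using the uniform $L^\infty$ bound of Lemma~\ref{uniform bound of norm in single star case} for the first piece. This gives $\|V_{\sigma_R}\|_{L^\infty}\le\widetilde c\,\epsilon_R^{1/4}+c(\epsilon_R^a+\epsilon_R^b)$ with $\epsilon_R=\sup_x\int_{B_1(x)}\sigma_R\,dy$. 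Hence $\epsilon_R\ge\epsilon_0>0$ uniformly in $R$.

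The paper then uses axial symmetry to keep $x_R$ within bounded distance of the $z$-axis and picks $x_0\in B_r(x_R)$ in (\ref{ELg constraint chap 2}). Concretely:
\begin{itemize}
\item Take $x_R$ with $\int_{B_1(x_R)}\sigma_R\,dy\ge\epsilon_0$.
\item Chebyshev, $|\{\sigma_R>\eta\}|\le m/\eta$, yields $x_0\in B_\rho(x_R)\cap B_R(0)$ with $\sigma_R(x_0)\le\eta$ once $\rho^3\gtrsim m/\eta$.
\item Applying (\ref{ELg constraint chap 2}) at $x_0$ gives $\lambda_R\le A'(\eta)-\epsilon_0/(\rho+1)$.
\item Since $\epsilon_0/(\rho+1)\gtrsim\epsilon_0\,m^{-1/3}\eta^{1/3}$ while $A'(\eta)=o(\eta^{1/3})$, fixing one small $\eta$ gives $\lambda_R\le l<0$ for all large $R$.
\end{itemize}
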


\begin{proof}
	{
		\rm
		For energies, we choose a text function $\rho_R(x)=R^{-3}\rho (R^{-1}x) \in W_R$ for some suitable function $\rho$, and estimate $E\left(\rho_R\right)=o\left(R^{-1}\right)-c R^{-1}$ as $R \rightarrow \infty$, see \cite[Lemma 6]{AB71}, then $\exists R_2$ such that $\forall R \geq R_2$, $E\left(\sigma_R\right) \leq E\left(\sigma_{R_2}\right) \leq E\left(\rho_{R_2}\right) \coloneq \alpha<0$.
		
		For Lagrange multipliers, by Hölder's inequality, we have
		$$
		-\alpha \leq-E\left(\sigma_R\right) \leq \frac{1}{2} \int_{\mathbb{R}^3} \sigma_R V_{\sigma_R}\,dx \leq \frac{1}{2} m\left\|V_{\sigma_R}\right\|_{L^{\infty}\left(\mathbb{R}^3\right)}
		$$
		then for $R$ large enough, $\left\|V_{\sigma_R}\right\|_{L^{\infty}\left(\mathbb{R}^3\right)} \geq-\frac{2 \alpha}{m}$, which can help to imply $\sigma_R$ cannot become uniformly diffuse as $R \rightarrow \infty$.	That is, we claim $\exists \epsilon_0$, such that $\forall R>0, \exists x_R \in \mathbb{R}^3$, we have
		$$
		\int_{\left|y-x_R\right|<1} \sigma_R(y) \,dy \geq \epsilon_0
		$$

		In fact, let $\epsilon_R=\sup\limits_x \int_{|y-x|<1} \rho_R\,dy$, and estimate $V_{\sigma_R}(x)$ as
		$$
		\begin{aligned}
			V_{\sigma_R}(x) & =\int_{|y-x|<1} \frac{\sigma_R(y)}{|y-x|} \,dy+\int_{1<|y-x|<r} \frac{\sigma_R(y)}{|y-x|} \,dy+\int_{|y-x|>r} \frac{\sigma_R(y)}{|y-x|} \,dy \\
			& :=V_1+V_2+V_3
		\end{aligned}
		$$
		
		It's easy to see $V_3 \leq \frac{M}{r}$. Since the shell $1<|y-x|<r$ can be covered by finite balls of radius 1 (it is 3-dimensional space thus the number of balls can be $\leq c r^3$), then $V_2 \leq c \epsilon_R r^3$. Thanks to Lemma \ref{uniform bound of norm in single star case} $\left(\left\|\sigma_R\right\|_{L^{\infty}\left(\mathbb{R}^3\right)} \leq k_1\right)$ and Proposition \ref{bound of potential}, we have $V_1 \leq c\left(\epsilon_R^a+\epsilon_R^b\right)$ where $a>0, b>0$. Let $g(r)=\frac{M}{r}+c \epsilon_R r^3+c\left(\epsilon_R^a+\epsilon_R^b\right)$, where $r>0$. We know it has minimal value $g_{\min }=\widetilde{c} \epsilon_R^{\frac{1}{4}}+c\left(\epsilon_R^a+\epsilon_R^b\right)$ at $r_{\min }=\left(\frac{M}{3 c \epsilon_R}\right)^{\frac{1}{4}}$. If $r_{\min } \geq 1$, then $\left\|V_{\sigma_R}\right\|_{L^{\infty}\left(\mathbb{R}^3\right)} \leq \widetilde{c} \epsilon_R^{\frac{1}{4}}+c\left(\epsilon_R^a+\epsilon_R^b\right)$. On the other hand, $\left\|V_{\sigma_R}\right\|_{L^{\infty}\left(\mathbb{R}^3\right)} \geq-\frac{2 e}{m}$. Therefore, $\exists \widetilde{\epsilon}>0, \widetilde{R}>0$, such that $\forall R>\widetilde{R}$, we have $\epsilon_R>\widetilde{\epsilon}$. If $r_{\text {min }}<1$, then $\epsilon_R>\frac{M}{3 c}$, with an abuse of notation, $\epsilon_R>\widetilde{\epsilon}$ still holds by letting $\widetilde{\epsilon}=\min \left\{\widetilde{\epsilon}, \frac{M}{3 c}\right\}$. If $R \leq \widetilde{R}, \sigma_R \in W_{\widetilde{R}}$, thus spt $\sigma_R$ is covered by finite balls with radius 1 (the number of balls $C$ is independent of $R$). Since the mass of $\sigma_R$ is $m$, we know $\epsilon_R>\frac{m}{2 C}$. Let $\epsilon=\min \left\{\widetilde{\epsilon}, \frac{m}{2 C}\right\}$, we know $\epsilon_R>\epsilon$ for all $R$, which proves the claim.
		
		By the construction of $W_R$ we get $\sigma_R$ is axially symmetric, then $x_R$ mentioned in the claim should satisfy $r\left(x_R\right) \leq r_0$.
		
		Therefore, we can modify the arguments to prove $\lambda_R<0$ in Step \ref{1} by choosing suitable $x_0 \in B_r\left(x_R\right)$ to get $\lambda_R \leq l<0$ for all large $R$.
	}
\end{proof}

\begin{remark}\label{negative energy and Lagrange multiplier for any minimizer}
	One can use an argument similar to the proof of Lemma \ref{uniform bound for energy and multipliers} above to show that, in fact, for any energy minimizer, the corresponding conclusion holds. That is, the energy and the Lagrangian multiplier are both negative.
\end{remark}

\begin{remark}
	The result that the energy is negative is nice, since later we will show $\sigma_R$ is truly a minimizer on $W_{\infty}=mR{(\mathbb{R}^3)}$, then $E_0\left(\sigma_R\right)$ is the minimal energy $e_0(m)$, which is less than 0.
\end{remark}

\begin{lemma}[Uniform Bound on the Sizes of Supports {\cite{AB71}}]\label{uniform bound on the sizes in single star case}
	$\exists R_1>0$, such that $\forall R>R_1$, spt $\sigma_R$ is contained in $\overline{B_{R_1}(0)}$.
\end{lemma}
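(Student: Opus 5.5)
The plan is to use the one-sided Euler–Lagrange inequality (\ref{ELAl constraint chap 2}): wherever $\sigma_R(x)>0$ we have $0<A'(\sigma_R(x))\le V_{\sigma_R}(x)+\lambda_R$. By Lemma \ref{uniform bound for energy and multipliers}, $\lambda_R\le l<0$ for $R>\widetilde R_0$. Hence $\sigma_R(x)>0$ forces $V_{\sigma_R}(x)>-l=|l|$. It therefore suffices to find $R_1$, independent of $R$, such that $V_{\sigma_R}(x)\le |l|$ whenever $|x|>R_1$ (for $R>R_1$). Continuity of $\sigma_R$ on $B_R(0)$ (shown in the proof of Lemma \ref{uniform bound of norm in single star case}) then turns the a.e. statement into an everywhere statement, giving spt $\sigma_R\subset\overline{B_{R_1}(0)}$.

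To bound $V_{\sigma_R}$ far out, I will split the potential at a fixed distance. For $x$ and $r\ge 1$,
\[
V_{\sigma_R}(x)=\int_{|y-x|<r}\frac{\sigma_R(y)}{|x-y|}\,dy+\int_{|y-x|\ge r}\frac{\sigma_R(y)}{|x-y|}\,dy .
\]
The second term is at most $m/r$; I choose $r=2m/|l|$ so that it is $\le |l|/2$. The first term is controlled using the uniform bound $\|\sigma_R\|_{L^\infty}\le k_1$ from Lemma \ref{uniform bound of norm in single star case}. Splitting it further into $|y-x|<\delta$ and $\delta\le|y-x|<r$ gives a bound
\[
2\pi k_1\delta^2+\delta^{-1}\int_{B_r(x)}\sigma_R\,dy .
\]
So it remains to show that the mass of $\sigma_R$ in $B_r(x)$ is uniformly small once $|x|$ is large. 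That is, I need uniform tightness of $\{\sigma_R\}_{R}$ around the origin.

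\textbf{Main obstacle: tightness.} Mass could a priori escape to infinity, for instance along the $z$-axis, or as a far-away lump. I will exploit the symmetry built into $W_R$.
\begin{itemize}
\item[(a)] \emph{Lateral direction.} Axial symmetry means a small ball centered at distance $\rho$ from the $z$-axis has $\gtrsim \rho/r$ disjoint rotated copies carrying equal mass. Its mass is therefore $\lesssim m r/\rho$, which is small for large $\rho$.
\item[(b)] \emph{Vertical direction.} Here I will use the Euler–Lagrange inequality again in a bootstrap. Any point $x$ with $\sigma_R(x)>0$ must have $V_{\sigma_R}(x)\ge|l|$. By the splitting above, this forces $\int_{B_r(x)}\sigma_R\ge c_0:=\delta(|l|/2-2\pi k_1\delta^2)>0$, once $\delta$ is chosen small; this constant is independent of $R$.
\item[(c)] \emph{Counting lumps.} Consequently, spt $\sigma_R$ can be covered by at most $N\le C m/c_0$ balls of radius $2r$ centered on the support, each carrying mass $\ge c_0$. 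By the Vitali covering lemma, the disjoint sub-collection has at most $m/c_0$ members.
\item[(d)] \emph{Connectedness.} The remaining step is to show these $N$ balls form a connected chain that contains the region near the origin. I would try to rule out a separated component by a comparison argument: translating a detached lump toward the main body lowers $-G/2$ strictly while preserving $U$ and membership in $W_R$. This would contradict minimality, except that the centering condition must be respected. I would handle that by moving symmetric pairs of lumps simultaneously, or by using the reflection structure $\int_{z<0}=\int_{z>0}$.
\end{itemize}

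Once all of spt $\sigma_R$ lies in a chain of at most $N$ balls of radius $2r$ meeting a fixed neighbourhood of the origin, we get spt $\sigma_R\subset \overline{B_{R_1}(0)}$ with $R_1=4Nr+r_0$. Here $r_0$ is the bound on the axial distance of the concentration point $x_R$ from the proof of Lemma \ref{uniform bound for energy and multipliers}, and $R_1$ is independent of $R$.

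I expect step (d), excluding far-separated pieces of mass with a uniform bound, to be the hardest. If the translation argument runs into trouble with the constraint set, the fallback is the approach of \cite[Lemma 7]{AB71}: directly estimate $V_{\sigma_R}$ along the $z$-axis using the half-mass centering condition.
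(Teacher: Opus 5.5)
Your steps (a)--(c) are sound. Step (b) gives a uniform non-vanishing bound: every point of spt $\sigma_R$ carries mass at least $c_0$ in $B_r(x)$. Combined with (a), this confines spt $\sigma_R$ to a cylinder $\{r(x)\le Cmr/c_0\}$ around the $z$-axis, uniformly in $R$. The paper does essentially the same, using unit balls for the lateral direction and horizontal slabs for the vertical one.

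The gap is step (d). This is where the real difficulty lies, and you leave it as a plan whose mechanism does not work as stated.
\begin{itemize}
\item Translating a ``detached lump'' is not in general an admissible competitor. A non-vertical translation destroys axial symmetry. Even an axisymmetric lump (say a connected component of $\bigcup_{x\in\mathrm{spt}\,\sigma_R}B_r(x)$) need not be separated from the rest of the mass by any plane; think of a ring around a central core. Then no translation shortens all cross distances while avoiding overlap, so you cannot keep $U$ unchanged and provably increase $G$.
\item Your fixes for the centering constraint use structure $W_R$ does not have. It only prescribes $\int_{z<0}\sigma_R=\int_{z>0}\sigma_R=m/2$, with no $z\mapsto -z$ symmetry, so there are no ``symmetric pairs'' of lumps.
\item The fallback of estimating $V_{\sigma_R}$ along the axis cannot work. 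By your own (b), a far lump of mass $\ge c_0$ produces $V_{\sigma_R}\ge |l|$ on itself, so it is compatible with the Euler--Lagrange inequality. Only minimality, through an explicit competitor, can exclude it.
\item Your final formula $R_1=4Nr+r_0$ is not justified. $r_0$ bounds only the axial distance of $x_R$, not its height; the vertical location has to come from the centering condition.
\end{itemize}

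The missing idea, which the paper uses, is to slide half-spaces rather than lumps once lateral confinement is known.
\begin{itemize}
\item By (c), the projection of spt $\sigma_R$ onto the $z$-axis is covered by at most $N$ intervals of length $4r$.
\item Suppose this projection has a gap $(a,b)$. Then the slab $\{a<z<b\}$ carries no mass. Take the part of $\sigma_R$ on the side of the slab away from the origin (either side if $a<0<b$). Translate it vertically toward the slab by some small $t\in(0,b-a)$ that moves no mass across $\{z=0\}$.
\item This competitor preserves axial symmetry, the $L^\infty$ bound, $U$ (there is no overlap), the half-mass condition, and spt $\subset B_R(0)$ (the moved part has smaller $|z|$). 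Every distance between the two parts strictly decreases, so $G$ strictly increases and $E_0$ strictly decreases. This contradicts minimality of $\sigma_R$ in $W_R$.
\item Hence the projection is a single interval of length at most $4rN$. Since both $\{z<0\}$ and $\{z>0\}$ carry mass $m/2$, it contains $0$ and lies in $[-4rN,4rN]$.
\end{itemize}
Together with the cylinder bound, this gives an $R_1$ independent of $R$.
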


\begin{proof}
	{
		\rm
		We first show $V_{\sigma_R}(x) \leq-l$ holds when the distance from $x$ to the z-axis $r(x)$ and $R$ is large enough, then due to (\ref{ELAl constraint chap 2}) and Lemma \ref{uniform bound for energy and multipliers} we know $\sigma_R(x)=0$. Let $a,b>0$ be as in the estimate $V_1 \leq c(\epsilon_R^a + \epsilon_R^b)$ from the proof of Lemma \ref{uniform bound for energy and multipliers}, and let $c_1,c_2,c_3>0$ be sufficiently large constants. Then define
		\[
		f(r, \pi)=c_1(\pi^{-a}+\pi^{-b})+c_2\pi^{-1}r^3+c_3r^{-1}
		\]
		Notice if we choose $r\geq r_0:=1+\frac{-3 c_3}{l}$, then $c_3 r^{-1}<-\frac{l}{3}$, and we have $f(r, \pi) \leq-l$ when $\pi$ is large enough.
		
		Now we claim $\exists \pi_0>2 r_0-1$, $\forall \pi>\pi_0$, $\forall x$ with $r(x)>\pi$, $\forall r \in\left(r_0, \frac{1+\pi}{2}\right)$ we have $V_{\sigma_R}(x) \leq f(r, \pi) \leq-l$.
		
		In fact, let $\epsilon_{\pi, R}=\sup\limits_{r(x)>\frac{\pi}{2}} \int_{|y-x|<1} \sigma_R(y) \,dy$, then $\exists x_{\pi, R} \in \mathbb{R}^3$ with $r\left(x_{\pi, R}\right)>\frac{\pi}{2}$, such that
		$$
		\int_{\left|y-x_{\pi, R}\right|<1} \sigma_R(y) \,dy>\epsilon_{\pi, R}-\frac{1}{\pi}
		$$
		On the other hand, due to the axial symmetry of $\sigma_R(y), \sigma_R(y)$ has mass $\geq {cr}\left(x_{\pi, R}\right)\left(\epsilon_{\pi, R}-\frac{1}{\pi}\right)$ in the torus obtained by revolving the ball $\left|y-x_{\pi, R}\right|<1$ around the z-axis, which implies $\epsilon_{\pi, R} \leq \frac{m}{{cr}\left(x_{\pi, R}\right)}+\frac{1}{\pi}<\frac{2 m}{c \pi}+\frac{1}{\pi}$. By choosing $c$ sufficiently large, we have $\epsilon_{\pi, R}<\frac{c}{\pi}$.
		
		For all $x$ with $r(x)>\pi$, notice due to $r<\frac{1+\pi}{2}$, we can choose finite balls centered at $x_n$ of radius 1 with $r\left(x_n\right)>\frac{\pi}{2}$ covering $1<|y-x|<r$, and the number of balls can be $\leq c r^3$. Similar to Lemma \ref{uniform bound for energy and multipliers}, together with $\epsilon_{\pi, R}<\frac{c}{\pi}$, we have
		$$
		\begin{aligned}
			V_{\sigma_R}(x) & =\int_{|y-x|<1} \frac{\sigma_R(y)}{|y-x|} \,dy+\int_{1<|y-x|<r} \frac{\sigma_R(y)}{|y-x|} \,dy+\int_{|y-x|>r} \frac{\sigma_R(y)}{|y-x|} \,dy \\
			& \leq c\left(\epsilon_{\pi, R}^a+\epsilon_{\pi, R}^b\right)+c r^3 \epsilon_{\pi, R}+c r^{-1} \\
			& \leq c_1\left(\pi^{-a}+\pi^{-b}\right)+c_2 \pi^{-1} r^3+c_3 r^{-1} \\
			& =f(r, \pi)
		\end{aligned}
		$$
		Since $r \geq r_0$, when $\pi$ is large enough we have $f(r, \pi) \leq-l$ as we mention above. Therefore, the claim holds true. Thus $\forall \pi> \pi_0$, $\forall x$ with $r(x)>\pi$, we have $V_{\sigma_R}(x) \leq f(\frac{2r_0+1+\pi}{4}, \pi) \leq-l$, then $\sigma_R(x)=0$ due to (\ref{ELAl constraint chap 2}) and Lemma \ref{uniform bound for energy and multipliers}.
		
		Now we consider the z-direction and show $V_{\sigma_R}(x) \leq-l$ holds when $z(x)$ is large enough, which is relatively easier. We can first show $\exists \delta>0$, $r \geq 2$ and $\widetilde{R_0}>0$, such that $\forall R \geq \widetilde{R_0}$, if $\int_{|z(x)-d|<r} \sigma_R(x) \,dx<\delta$, then $\sigma_R(x)=0$ for $|z(x)-d| \leq 1$. This comes from that when $|z(x)-d| \leq 1$ we have the estimate:
		$$
		V_{\sigma_R}(x)=\int_{|z(y)-d|<r} \frac{\sigma_R(x)}{|y-x|} \,dy+\int_{|z(y)-d|>r} \frac{\sigma_R(x)}{|y-x|} \,dy \leq c\left(\delta^a+\delta^b\right)+\frac{m}{r-1}
		$$
		
		And then letting $\delta$ small enough and $r$ large enough can give $V_{\sigma_R}(x) \leq-l$, which means $\sigma_R(x)=0$. Then we set $Z_n=\{x|| z(x)-2 n \mid \leq 1\},  n=0, \pm 1, \pm 2, \ldots$. and $Z_n^{\prime}= \{x|| z(x)-2 n \mid<r\}$, without loss of generality, we can take $r$ to be an integer. Let $N$ be the number of $Z_n^{\prime}$ such that $\int_{Z_n^{\prime}} \sigma_R\,dx \geq \delta$. Since the mass of $\sigma_R$ is $m<\infty$ and the $Z_n^{\prime}$s overlap finite times $\widetilde{r}<\infty$, we have $N \delta \leq \widetilde{r} m$, i.e., $N \leq \frac{\widetilde{r} m}{\delta}$. Therefore, $\sigma_R(x)=0$ for large $R$ except on a set of horizontal slabs $Z_n$ of total height at most $\frac{2 \widetilde{r} m}{\delta}$.

		If these $Z_n{s}$ were known to be contiguous, then we would have $\sigma_R(x)=0$ for $|z(x)| \geq \frac{2 r m}{\delta}$. But if the $Z_n{s}$ were not contiguous, we could follow the idea of the strong rearrangement inequality in Lieb \cite[Lemma 3]{Lie77}. We could rearrange the order of $Z_n$, slide the half-space above an empty $Z_n$ down one unit and obtain a new $\widetilde{\sigma_R} \in W_R$ from $\sigma_R$. During the process we notice $$U\left(\widetilde{\sigma_R}\right)=\int_{\mathbb{R}^3} A\left(\widetilde{\sigma_R}\right)\,dx=\int_{\mathbb{R}^3} A\left(\sigma_R\right)\,dx=U\left(\sigma_R\right)$$
		$$G\left(\widetilde{\sigma_R}, \widetilde{\sigma_R}\right)=\int_{\mathbb{R}^3} \widetilde{\sigma_R} V_{\widetilde{\sigma_R}}\,dx>\int_{\mathbb{R}^3} \sigma_R V_{\sigma_R}\,dx=G\left(\sigma_R, \sigma_R\right)$$
		then we obtain $E_0\left(\widetilde{\sigma_R}\right)<E_0\left(\sigma_R\right)$, which leads to a contradiction.
		
		Collect the results above, we know we can choose a large $R_1$, such that $\forall R>R_1$, spt $\sigma_R$ is contained in $\overline{B_{R_1}(0)}$.
	}
\end{proof}

Now we can go forward to Step \ref{4} and prove the constrained minimizer can be a global minimizer.

\begin{lemma}[Existence of Global Minimizer in Non-rotating case {\cite{AB71}}]\label{existence of global minimizer in single star case}
	Take $R^{\prime}= \max \left\{k_1, R_1\right\}+1$, where $k_1$ and $R_1$ are given in Lemma \ref{uniform bound of norm in single star case} and Lemma \ref{uniform bound on the sizes in single star case}, then we have $\forall R \geq R^{\prime}, \sigma_R$ is a minimizer on $m {R}\left(\mathbb{R}^3\right)$.
\end{lemma}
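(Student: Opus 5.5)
The plan is to argue by contradiction, comparing $\sigma_R$ with arbitrary competitors in $mR(\mathbb{R}^3)$ after moving them into some $W_{\widetilde R}$ with $\widetilde R$ large. Fix $R\ge R'$.

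The first step is to show that $E_0(\sigma_R)=E_0(\sigma_{\widetilde R})$ for every $\widetilde R\ge R$.
\begin{itemize}
\item Since $W_R\subset W_{\widetilde R}$, we have $E_0(\sigma_{\widetilde R})\le E_0(\sigma_R)$.
\item By Lemma \ref{uniform bound of norm in single star case}, $\|\sigma_{\widetilde R}\|_{L^\infty}\le k_1<R$.
\item By Lemma \ref{uniform bound on the sizes in single star case}, $\operatorname{spt}\sigma_{\widetilde R}\subset \overline{B_{R_1}(0)}\subset B_R(0)$.
\item The symmetry and centering conditions do not depend on the radius. Hence $\sigma_{\widetilde R}\in W_R$, and therefore $E_0(\sigma_R)\le E_0(\sigma_{\widetilde R})$.
\end{itemize}
Together these give $E_0(\sigma_R)=\inf_{\widetilde R}\inf_{W_{\widetilde R}}E_0$. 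Any minimizer of $E_0$ on $W_R$ serves as $\sigma_R$ here, so the choice of minimizer does not matter.

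The second step is to reduce an arbitrary $\rho\in mR(\mathbb{R}^3)$ to the classes $W_{\widetilde R}$. Symmetric decreasing rearrangement $\rho^*$ preserves mass and $U$ and does not decrease $G$ (Riesz/Lieb, as in Theorem \ref{non-rotating}(iii)). Thus $E_0(\rho^*)\le E_0(\rho)$, and $\rho^*$ is axially symmetric with equal mass in each half-space. If $U(\rho)=\infty$ there is nothing to prove. Otherwise, truncate $\rho^*$ by setting $\rho_n=c_n\min\{\rho^*,n\}\mathbf 1_{B_n}$, where $c_n\to1$ restores the mass $m$. Then $\rho_n\in W_n$ for large $n$.

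The third step, which I expect to be the main technical point, is to show $E_0(\rho_n)\to E_0(\rho^*)$.
\begin{itemize}
\item For $G$: we have $\min\{\rho^*,n\}\mathbf 1_{B_n}\to\rho^*$ in $L^1\cap L^{4/3}$ by monotone and dominated convergence. Since $c_n\to1$, the same holds for $\rho_n$. Continuity of $G$ on $L^1\cap L^{4/3}$ then follows from Proposition \ref{bound of potential energy} and bilinearity, via $G(\rho_n,\rho_n)-G(\rho^*,\rho^*)=G(\rho_n-\rho^*,\rho_n+\rho^*)$.
\item For $U$: we have $\rho_n\le c_n\rho^*$ with $c_n\le 2$. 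The function $A$ is increasing and satisfies $A(2s)\le C A(s)$, which holds for polytropes; in general we use convexity and $A(0)=0$ only after replacing $c_n\rho^*$ by a domination argument. This gives a dominating function in $L^1$. If doubling is unavailable, instead normalize the mass by adding a small multiple of the indicator of a ball $B_\delta$ inside the support, where $\rho^*\ge$ some $s_1>0$; its effect on $U$ and $G$ vanishes as the added mass $\to0$.
\end{itemize}
Dominated convergence then gives $U(\rho_n)\to U(\rho^*)$.

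Combining the three steps, $E_0(\sigma_R)\le E_0(\rho_n)\to E_0(\rho^*)\le E_0(\rho)$ for every $\rho\in mR(\mathbb{R}^3)$. Hence $\sigma_R$ is a global minimizer and $e_0(m)=E_0(\sigma_R)$.
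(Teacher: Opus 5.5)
Your architecture is the paper's. The uniform bounds of Lemmas \ref{uniform bound of norm in single star case} and \ref{uniform bound on the sizes in single star case} place $\sigma_{\widetilde R}$ in $W_R$, so the constrained minimum is constant for $\widetilde R\ge R$. An arbitrary competitor is then rearranged and approximated from inside the classes $W_{\widetilde R}$; the paper delegates this construction to Auchmuty--Beals.

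Step 1 and the rearrangement are fine. The $G$ part of Step 3 is also fine, provided you use a bilinear bound such as $|G(f,g)|\le C\|f\|_{L^{6/5}}\|g\|_{L^{6/5}}$ (Hardy--Littlewood--Sobolev plus interpolation between $L^1$ and $L^{4/3}$). Proposition \ref{bound of potential energy} is only stated on the diagonal.

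The genuine gap is the convergence $U(\rho_n)\to U(\rho^*)$ under (F1)--(F3) alone.
\begin{itemize}
\item These assumptions give no upper growth bound on $P$, so $A$ need not be doubling. For $P(s)=e^{s^2}$ at large $s$, one gets $A(s)\sim e^{s^2}/(2s^2)$ and $A(2s)/A(s)\to\infty$.
\item Convexity with $A(0)=0$ only yields $A(cs)\ge cA(s)$ for $c\ge1$, which is the wrong direction for a domination.
\item The multiplicative renormalization can in fact fail. We have $c_n-1\ge m^{-1}\int_{|x|>n}\rho^*\,dx$, which may decay like $1/\log n$ for a heavy-tailed $\rho^*$. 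Meanwhile on $\{\rho^*>n\}$ you pay $A(c_n n)\,|\{\rho^*>n\}|$, which for such $A$ can diverge even though $\int A(\rho^*)\,dx<\infty$.
\item Your fallback does not fix this as written. To respect the axial symmetry and half-space balance in $W_n$, the bump must be radial, i.e.\ a ball around the origin. That is exactly where $\rho^*$ is largest and possibly unbounded, and there $\int_{B_\delta}\bigl[A(\min\{\rho^*,n\}+\eta_n)-A(\min\{\rho^*,n\})\bigr]dx$ is not controlled by $\eta_n\to0$.
\end{itemize}
What you need is an \emph{upper} bound on $\rho^*$ where the mass is added, not a lower bound.

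A repair that needs no growth condition adds the deficit on a fixed shell. Let $\tilde\rho_n=\min\{\rho^*,n\}\mathbf 1_{B_n}$ and $d_n=m-\int\tilde\rho_n\,dx\to0$. Set $\rho_n=\tilde\rho_n+\eta_n\mathbf 1_{\{1<|x|<2\}}$ with $\eta_n=d_n/|\{1<|x|<2\}|$. Because $\rho^*$ is radially nonincreasing, $\rho^*\le M:=m/|B_1|$ on $\{|x|\ge1\}$. Hence for $n\ge2$,
\[
0\le U(\rho_n)-U(\tilde\rho_n)\le |\{1<|x|<2\}|\,\sup_{0\le s\le M}\bigl[A(s+\eta_n)-A(s)\bigr]\longrightarrow 0
\]
by uniform continuity of $A$ on $[0,M+1]$. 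Meanwhile $U(\tilde\rho_n)\uparrow U(\rho^*)$ by monotone convergence. This $\rho_n$ is radial (so axially symmetric and centered), bounded by $n+1$, and supported in $\overline{B_n}$, so $\rho_n\in W_{n+2}$.

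Incidentally, your own $\rho_n$ is not literally in $W_n$: $c_n>1$ can push it above $n$, and its support lies only in the closed ball. An index shift is needed anyway. With this change, the chain $E_0(\sigma_R)\le E_0(\rho_n)\to E_0(\rho^*)\le E_0(\rho)$ goes through exactly as you wrote it.
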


\begin{proof}
	{
		\rm
		Thanks to Lemma \ref{uniform bound of norm in single star case} and Lemma \ref{uniform bound on the sizes in single star case}, we know $\forall R \geq R^{\prime}, \sigma_R \in W_{R^{\prime}}$, which implies $E_0\left(\sigma_R\right)=E_0\left(\sigma_{R^{\prime}}\right)$ since they should both be minimizers on $W_{R^{\prime}}$. Thus, we just need to show $\sigma_{R^{\prime}}$ is a minimizer on $W_{\infty}=m {R}\left(\mathbb{R}^3\right)$. To show that, given $\rho \in W_{\infty}$, we construct a $\rho_R \in W_R$ for large $R$, with $E_0\left(\rho_R\right) \leq E_0(\rho)+\epsilon$, where $\epsilon$ can be arbitrarily small. (For the construction of $\rho_R$, although $\rho$ is not necessarily to be spherically symmetric, but one can apply a strong rearrangement inequality in Lieb \cite[Lemma 3]{Lie77} and the generalization by Sobolev of the Hardy-Littlewood theorem on rearrangements of functions \cite{Sob38} to replace $\rho$ by another spherically symmetric and radially decreasing (after translation) $\rho$, see for example \cite[Appendix]{AB71}, \cite[Section 6]{McC06}. Then one can refer to \cite[Section 6]{AB71} or \cite[Section 4]{Che26E3} to construct $\rho_R$.) Then we have $E_0\left(\sigma_{R^{\prime}}\right)=E_0\left(\sigma_R\right) \leq E_0\left(\rho_R\right) \leq E_0(\rho)+\epsilon$. Let $\epsilon \rightarrow 0$ we obtain $E_0\left(\sigma_{R^{\prime}}\right) \leq E_0(\rho)$, which shows the result.
	}
\end{proof}

It is natural to conjecture whether there are some relations between Lagrange-multipliers $\lambda_m$ and the derivatives of $e_0(m)$, since they can be obtained by somehow ``differentiating'' $E_0\left(\sigma_m\right)$. Thanks to Lieb and Yau's paper \cite[Theorem 3 and Theorem 4]{LY87}, we can know the left and right derivatives of $e_0(m)$ bound $\lambda: e_0^{\prime}\left(m^{+}\right) \leq \lambda \leq e_0^{\prime}\left(m^{-}\right)$, as cited in \cite[Theorem 3.5]{McC06} and \cite[Section 2]{Che26G1}. Moreover, we find a more precise quantitative property in the case that complies with polytropic equations of state in the following.

\begin{proposition}[Expression for Lagrange-multipliers] \label{exact value of multiplier}
	Let pressure $P(\rho)$ satisfies polytropic law with index $\gamma > \frac{4}{3}$, $E_0(\rho)$ from (\ref{energy}), $e_0(m)$ from (\ref{e_0}) and $m \in[0,1]$, then $e_0^{\prime}(m)=\lambda_m=-(5 \gamma-6) m^{\frac{2 \gamma-2}{3 \gamma-4}} U(\sigma)<0$, where $\sigma$ is the minimizer of $E_0(\rho)$ with mass $1$.
\end{proposition}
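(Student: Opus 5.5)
The plan is to derive the statement from two scaling identities and then invoke Theorem \ref{non-rotating} (viii). The first identity relates the minimal energies of different masses. The second, a virial identity, expresses $e_0=e_0(1)$ through $U(\sigma)$.

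\textbf{Step 1: scaling in mass.} With $P(s)=Ks^\gamma$ we have $A(s)=\frac{K}{\gamma-1}s^\gamma$ by (\ref{AwithP}). For $a,b>0$ and $\rho\in R(\mathbb{R}^3)$, set $T\rho(x):=a\rho(x/b)$. A change of variables gives
\begin{align*}
\int T\rho\,dx&=ab^3, & U(T\rho)&=a^\gamma b^3U(\rho), & G(T\rho,T\rho)&=a^2b^5G(\rho,\rho).
\end{align*}
I will choose $a,b$ so that $ab^3=m$ and $a^\gamma b^3=a^2b^5$, i.e.\ $a^{\gamma-2}=b^2$. This gives $a=m^{2/(3\gamma-4)}$ and $b=m^{(\gamma-2)/(3\gamma-4)}$, which agrees with the scaling quoted in the proof of Part (ii). With this choice $T$ is a bijection from $R(\mathbb{R}^3)$ onto $mR(\mathbb{R}^3)$, and
\[
E_0(T\rho)=a^\gamma b^3E_0(\rho)=m^{\frac{5\gamma-6}{3\gamma-4}}E_0(\rho).
\]
Taking infima on both sides yields $e_0(m)=m^{\frac{5\gamma-6}{3\gamma-4}}e_0$ for $m>0$, and the formula also holds at $m=0$. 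Consequently $e_0$ is differentiable on $(0,1]$ with
\[
e_0'(m)=\tfrac{5\gamma-6}{3\gamma-4}\,m^{\frac{2\gamma-2}{3\gamma-4}}\,e_0 .
\]
The same formula holds at $m=0$ as a one-sided derivative, because $\frac{5\gamma-6}{3\gamma-4}>1$.

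\textbf{Step 2: virial identity.} Let $\sigma$ be a mass-one minimizer. The family $\sigma_\delta(x)=\delta^{-3}\sigma(x/\delta)$ stays in $R(\mathbb{R}^3)$, and
\[
f(\delta):=E_0(\sigma_\delta)=\delta^{3-3\gamma}U(\sigma)-\tfrac{1}{2\delta}G(\sigma,\sigma).
\]
This is a smooth function of $\delta$ with its minimum at $\delta=1$. Setting $f'(1)=0$ gives $\frac12G(\sigma,\sigma)=3(\gamma-1)U(\sigma)$, and therefore $e_0=E_0(\sigma)=(4-3\gamma)U(\sigma)$. Here $U(\sigma)<\infty$ by Remark \ref{diff. at mini.}. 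Also $U(\sigma)>0$, since $\sigma\ne0$ and $A(s)>0$ for $s>0$. Substituting into Step 1,
\[
e_0'(m)=-(5\gamma-6)\,m^{\frac{2\gamma-2}{3\gamma-4}}\,U(\sigma),
\]
which is $<0$ for $m>0$.

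\textbf{Step 3: identify the multiplier.} Since $e_0$ is differentiable, $e_0'(m^+)=e_0'(m^-)=e_0'(m)$. Theorem \ref{non-rotating} (viii) then forces $\lambda_m=e_0'(m)$. At $m=0$ both sides vanish, matching $\lambda_0=0$ in Part (vii); the strict inequality in the statement is meant for $m>0$.

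The main point requiring care is Step 1: one has to check that $T$ really maps $R(\mathbb{R}^3)$ bijectively onto $mR(\mathbb{R}^3)$, with the $L^{4/3}$ integrability preserved, so that the infima correspond and not merely the values at minimizers. A smaller subtlety is the one-sided reading of (viii) at the endpoints $m=0$ and $m=1$ of the interval; there I would use the explicit power law directly.
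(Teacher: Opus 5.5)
Your proof is correct. Its skeleton matches the paper's: the mass scaling of $e_0$, then Theorem \ref{non-rotating} (viii), then a relation between $G(\sigma,\sigma)$ and $U(\sigma)$. Where you differ is in how you obtain the key relation $G(\sigma,\sigma)=6(\gamma-1)U(\sigma)$.

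The paper first reduces to $m=1$ via Proposition \ref{scaling relation for derivatives}. It then tests the Euler--Lagrange equation against $\sigma$ to get $\lambda_1=\gamma U(\sigma)-G(\sigma,\sigma)$, and equates this with $e_0'(1)=\frac{5\gamma-6}{3\gamma-4}\bigl(U(\sigma)-\frac12G(\sigma,\sigma)\bigr)$. That comparison fixes the ratio $G/U$ only when $\gamma\neq2$. At $\gamma=2$ one has $B=1$, so the mass-scaling family is just $(1+t)\sigma$ and the two relations coincide identically. The paper therefore falls back on the dilation identity of Remark \ref{relation between G and U} in that case.

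You use that dilation (virial) identity for every $\gamma$. This removes the case split and makes both the separate EL-testing step and Proposition \ref{scaling relation for derivatives} unnecessary. Your bijection $T\colon R(\mathbb{R}^3)\to mR(\mathbb{R}^3)$ also gives $e_0(m)=m^{\frac{5\gamma-6}{3\gamma-4}}e_0$ directly at the level of infima. It therefore does not need the uniqueness result that Theorem \ref{scaling relation for energy} invokes. Moreover, since $U(\sigma)=e_0/(4-3\gamma)$ for every mass-one minimizer, your formula is visibly independent of which minimizer is chosen. What the paper's route gives in addition is the direct expression $\lambda_1=\gamma U(\sigma)-G(\sigma,\sigma)$, obtained from the Euler--Lagrange equation alone.

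Two small points. You are right that the strict inequality in the statement fails at $m=0$, where both sides vanish. At $m=1$ there is no endpoint subtlety, because $e_0$ is defined and differentiable on all of $(0,\infty)$, so the derivative there is two-sided.
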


\begin{proof}
	{
		\rm
		By scaling relations in Section \ref{section3-scaling relations} (Theorem \ref{scaling relation for energy}) we can obtain $e_0(m)=m^{\frac{5 \gamma-6}{3 \gamma-4}} e_0$, which implies $e_0^{\prime}$ exists. Then $e_0^{\prime}(m)=\lambda_m$ can come from Theorem \ref{non-rotating} (viii). As in the proof of Theorem \ref{non-rotating} (ii), we use the scaling relations mentioned in Section \ref{section3-scaling relations} (Proposition \ref{scaling relation for derivatives}). We obtain $E_0^{\prime}\left(\sigma_m\right)(x)=\frac{1}{A^{\gamma-1}} E_0^{\prime}(\sigma)\left(\frac{1}{B} x\right)$, thus $\lambda_m= \frac{1}{A^{\gamma-1}} \lambda_1$. Since $e_0(m)=m^{\frac{5 \gamma-6}{3 \gamma-4}} e_0$, then $e_0^{\prime}(m)=\frac{5 \gamma-6}{3 \gamma-4} m^{\frac{2 \gamma-2}{3 \gamma-4}} e_0$, and $e_0^{\prime}(m)=\frac{1}{A^{\gamma-1}} e_0^{\prime}(1)$. Therefore, we just need to consider the case $m=1$. Notice we have Euler-Lagrange equation $E_0^{\prime}(\sigma)(x)=\lambda_1$ a.e. on $\{\sigma(x)>0\}$. Rather than substituting a specific $x$ to calculate $E_0^{\prime}(\sigma)$, we view $E_0^{\prime}(\sigma)$ as a functional in the sense that $E_0^{\prime}(\sigma)(\rho)=\int_{\mathbb{R}^3} E_0^{\prime}(\sigma) \rho\,dx$ as in Lemma \ref{diff. of energy} or Remark \ref{diff. at mini.}. We take $\rho=\sigma$ as a test function, notice the mass of $\sigma$ is 1. Then we have
		$$
		\int_{\mathbb{R}^3} E_0^{\prime}(\sigma) \sigma\,dx=\int_{\{\sigma(x)>0\}} E_0^{\prime}(\sigma) \sigma\,dx=\lambda_1 \int_{\{\sigma(x)>0\}} \sigma\,dx=\lambda_1
		$$
		
		On the other hand, we have
		$$
		\int_{\mathbb{R}^3} E_0^{\prime}(\sigma) \sigma\,dx=\int_{\mathbb{R}^3}\left(A^{\prime}(\sigma)-V_\sigma\right) \sigma\,dx=\gamma U(\sigma)-G(\sigma, \sigma)
		$$
		
		Then we get $\lambda_1=\gamma U(\sigma)-G(\sigma, \sigma)$, while $e_0^{\prime}(1)=\frac{5 \gamma-6}{3 \gamma-4} e_0=\frac{5 \gamma-6}{3 \gamma-4}\left(U(\sigma)-\frac{G(\sigma, \sigma)}{2}\right)$. Due to the previous arguments that $\lambda_1= e_0^{\prime}(1)$, we can deduce that $\lambda_1= e_0^{\prime}(1)=(6-5 \gamma) U(\sigma)$ if $\gamma \neq 2$. In the case $\gamma = 2$, we can apply Remark \ref{relation between G and U} below and obtain $\frac{G(\sigma, \sigma)}{2}=(3 \gamma-3) U(\sigma)$, then we also have $\lambda_1= e_0^{\prime}(1)=(6-5 \gamma) U(\sigma)$.
	}
\end{proof}

\subsection{Uniqueness Results for Non-rotating Bodies}\label{subsection2.3-uniqueness results for non-rotating bodies}

In this subsection, we discuss uniqueness results. To show the uniqueness result in the following, we follow Lieb and Yau's arguments \cite[Lemma 10, Lemma 11 and remark following]{LY87} \cite{McC06}, but adapt it from the framework of quantum mechanics to that of classical mechanics. We show if, in addition, $P(\rho)$ satisfies \ref{F4}, $A^{\prime}\left(\rho^3\right)$ has second derivative and is convex, uniqueness of minimizer up to translation can also be shown in the following arguments. In particular, if polytropic law holds true with $\gamma> \frac{4}{3}$, then $A^{\prime}\left(\rho^3\right)=\frac{K \gamma}{\gamma-1} \rho^{3(\gamma-1)}$ satisfies those conditions.

Due to Theorem \ref{non-rotating} (iii), we know the minimizer $\sigma_m$ is spherically symmetric after translation. In general, given a spherically symmetric function $\rho$, with an abuse of notation, we denote $\rho(|x|)=\rho(s)$, and 
$$m(r)=m_\rho(r)=\int_{|x|<r} \rho\,dx=4 \pi \int_0^r \rho(s) s^2 d s$$
is the mass of $\rho$ in $\{|x|<r\}$, then easy to show (we assume the following computational results are finite. This condition holds, for instance, when $\rho$ is continuous with compact support):
\begin{equation} \label{V_r}
	V_\rho(x)=V_\rho(|x|)=\frac{m(|x|)}{|x|}+4 \pi \int_{|x|}^{\infty} t \rho(t) d t
\end{equation}

By direct computation we have
\begin{equation}\label{diff. of V_r outside 0}
	\frac{d}{d r} V_\rho(r)=\frac{-m(r)}{r^2}, \text { where } r \neq 0 
\end{equation}
\begin{equation}\label{gradient of V outside 0}
	\nabla V_\rho(x)=\frac{d}{d r} V_\rho(|x|) \cdot \frac{x}{|x|}=\frac{-m(|x|)}{|x|^2}\cdot \frac{x}{|x|}, \text { where } x \neq 0
\end{equation}

Note that $V_\rho(0)=V_\rho(|0|)=4\pi\int_0^\infty t\rho(t)dt$, which follows directly from the definition of $V_\rho$ (or by applying L'Hôpital's rule to show $V_\rho(0)=\lim\limits_{x \rightarrow 0} V_\rho(x)=4 \pi \int_{0}^{\infty} t \rho(t) d t$). Then after using L'Hôpital's rule \cite[Theorem 5.13]{Rud76}, one can also show that
\begin{equation}\label{diff. of V_r at 0}
	\frac{d}{d r} V_\rho(0)=0 
\end{equation}
\begin{equation}\label{gradient of V at 0}
	\nabla V_\rho(0)=0
\end{equation}

Another way to show $\nabla V_\rho(0)=0$ is the fact that since $\lim\limits_{x \rightarrow 0} \nabla V_\rho(x)=0$ exists, by mean value theorem we can prove $\nabla V_\rho(0)=\lim\limits_{x \rightarrow 0} \nabla V_\rho(x)=0$. The third way to show it is using the fact $V_\rho$ is also spherically symmetric (same for $\frac{d}{d r} V_\rho(0)$)\footnote{Strictly speaking, we need to verify that $V$ is differentiable at $0$ then use the symmetric property. This differentiability can be deduced from the regularity of $\rho$. For instance, if $\rho$ is continuous with compact support, then arguments similar to those in Theorem \ref{non-rotating} (vi) ensure the differentiability of $V$ at $0$.}.

Based on (\ref{V_r}), we can show the Shell Theorem, which is given by Newton.
\begin{theorem} [Shell Theorem {\cite{She24} \cite[Page 192 Theorem XXX and Page 193 Theorem XXXI]{New87} \cite[Page 218 Theorem XXX and Theorem XXXI]{NCW99}}]\label{Shell Theorem}
	Suppose $\rho$ is spherically symmetric, if $\rho$ is supported in $B_R(0)$, denote its total mass by $m = \int_0^R 4\pi r^2 \rho(r)\,dr$, then the gravitational potential $V_\rho$ satisfies:
	\begin{enumerate}[(i)]
		\item For exterior points ($|x| \ge R$):
		$V_\rho(x) = \frac{m}{|x|}$.
		That is, given an object $P$ outside $B_R(0)$, $\rho$ produces the same external gravitational potential as if all $\rho$'s mass were concentrated at its center.
		\item For interior points ($|x| \le R$):
		$V_\rho(x) = \frac{m(|x|)}{|x|}+4 \pi \int_{|x|}^{\infty} t \rho(t) d t$,
		where the second term shows that the outer shells contribute a constant potential inside. In particular, if $\rho$ vanishes in $B_r(0)$ for some $r<R$, then $V_\rho$ is a constant in $B_r(0)$. Given an object $P$ inside $B_r(0)$, no net gravitational force is exerted by $\rho$ on $P$ due to (\ref{diff. of V_r outside 0}).
	\end{enumerate}
\end{theorem}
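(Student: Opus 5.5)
The plan is to derive both parts directly from the radial formula (\ref{V_r}), which the paper takes as given for spherically symmetric $\rho$ with finite integrals. Since $\rho$ is supported in $B_R(0)$, we have $\rho(t)=0$ for $t\ge R$. Together with $m(r)=4\pi\int_0^r\rho(s)s^2\,ds$, this means $m(r)=m$ for all $r\ge R$, so the total mass in the statement coincides with $m(R)$.

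For part (i), take $|x|\ge R$. The tail integral $4\pi\int_{|x|}^\infty t\rho(t)\,dt$ vanishes because the integrand is zero on $[|x|,\infty)\subset[R,\infty)$. Moreover $m(|x|)=m$, so (\ref{V_r}) reduces immediately to $V_\rho(x)=m/|x|$. The physical interpretation, that this is the potential of a point mass $m$ at the origin, is just a restatement of this formula.

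For part (ii), the displayed identity is exactly (\ref{V_r}), so it holds for $|x|\le R$ as well (indeed for all $x\neq 0$). At $x=0$ it should be read with $m(0)/0$ interpreted as its limit $0$, which gives $V_\rho(0)=4\pi\int_0^\infty t\rho(t)\,dt$, as noted before the theorem. For the "in particular" clause, suppose $\rho=0$ on $B_r(0)$. Then for $|x|<r$ we have $m(|x|)=0$, so the first term drops out. Also $\int_{|x|}^\infty t\rho(t)\,dt=\int_r^\infty t\rho(t)\,dt$, since the integrand vanishes on $[|x|,r)$. Hence $V_\rho$ equals the constant $4\pi\int_r^\infty t\rho(t)\,dt$ on $B_r(0)$. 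The no-force claim follows from (\ref{gradient of V outside 0}), since $\nabla V_\rho(x)=-m(|x|)x/|x|^3=0$ for $0<|x|<r$, and from (\ref{gradient of V at 0}) at the origin.

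The only point needing care is justifying (\ref{V_r}) itself, that is, the integral of $1/|x-y|$ over a sphere $|y|=t$, which equals $4\pi t^2/\max\{|x|,t\}$. If I were to verify it rather than assume it, I would write $V_\rho(x)=\int_0^\infty\rho(t)\int_{|y|=t}|x-y|^{-1}\,dS(y)\,dt$ and compute the inner integral in spherical coordinates: $2\pi t^2\int_0^\pi(|x|^2+t^2-2|x|t\cos\theta)^{-1/2}\sin\theta\,d\theta$. This evaluates to $2\pi t\,(|x|+t-\lvert |x|-t\rvert)/|x|$, which is $4\pi t^2/|x|$ if $t<|x|$ and $4\pi t$ if $t>|x|$. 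Splitting the $t$-integral at $|x|$ then gives (\ref{V_r}), with Tonelli justifying the interchange since everything is nonnegative. This sphere computation is the main, albeit elementary, obstacle. Everything else is bookkeeping of where $\rho$ vanishes.
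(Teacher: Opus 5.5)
Your proposal is correct and follows the paper's own route: the paper also obtains both parts directly from the radial formula (\ref{V_r}), using that $\rho$ vanishes for $|x|\ge R$ (respectively inside $B_r(0)$), and it cites (\ref{diff. of V_r outside 0}) for the no-force claim. Your explicit spherical-shell integral supplies the verification of (\ref{V_r}) that the paper leaves as ``easy to show''; the paper's subsequent remark mentions Legendre polynomials as another way to carry out that computation.
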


\begin{remark}
	In fact, one can also utilize Legendre polynomials to prove the Shell Theorem Theorem \ref{Shell Theorem}. The idea is essentially the same as the proof of Corollary \ref{gravitational potential energy between 2 spherical bodies} in the following.
\end{remark}

\begin{corollary}[Gravitational Potential Energy between Two Spherical Bodies]\label{gravitational potential energy between 2 spherical bodies}
	Suppose $\rho$ and $\sigma$ are spherically symmetric with total masses $m_1$ and $m_2$, and their centers are separated by a distance $R$.
	If $R$ is greater than the sum of their radii (so that the spheres do not overlap), then their mutual gravitational potential energy $G(\rho, \sigma)$ is
	$$G(\rho, \sigma) = \frac{m_1 m_2}{R}$$
	In other words, two non-overlapping spherically symmetric bodies attract each other exactly as if all their mass were concentrated at their centers.
\end{corollary}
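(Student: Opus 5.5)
We prove the corollary by applying the Shell Theorem (Theorem \ref{Shell Theorem}) twice, using Fubini's theorem to move between the two representations of $G$ in (\ref{G}). Translate coordinates so that $\rho$ is centered at $0$ and supported in $\overline{B_{r_1}(0)}$. Then $\sigma$ is centered at some point $x_0$ with $|x_0|=R$ and supported in $\overline{B_{r_2}(x_0)}$, where $r_1+r_2<R$ (or $\le R$; the boundary has measure zero). By definition,
$$G(\rho,\sigma)=\int_{\mathbb{R}^3} V_\rho(y)\,\sigma(y)\,dy .$$

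\textbf{Step 1: replace $V_\rho$ by a point-mass potential on $\operatorname{spt}\sigma$.} For $y\in\operatorname{spt}\sigma$ we have $|y|\ge |x_0|-r_2>r_1$. So $y$ is an exterior point for $\rho$, and Theorem \ref{Shell Theorem}(i) gives $V_\rho(y)=\frac{m_1}{|y|}$. Hence
$$G(\rho,\sigma)=m_1\int_{\mathbb{R}^3}\frac{\sigma(y)}{|y|}\,dy = m_1 V_\sigma(0).$$
The last expression is just the definition of $V_\sigma$ evaluated at the origin.

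\textbf{Step 2: apply the Shell Theorem to $\sigma$.} The function $\sigma$ is spherically symmetric about $x_0$. The potential is translation covariant: $V_{\sigma}(x)=V_{\sigma(\cdot+x_0)}(x-x_0)$. The origin satisfies $|0-x_0|=R>r_2$, so it is exterior to $\operatorname{spt}\sigma$, and Theorem \ref{Shell Theorem}(i) yields $V_\sigma(0)=\frac{m_2}{R}$. Combining the two steps gives $G(\rho,\sigma)=\frac{m_1m_2}{R}$.

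\textbf{Technical points.} The only things to check are the legitimacy of the integrals and of Fubini. All integrands are nonnegative, so Tonelli's theorem applies without further hypotheses. The Shell Theorem was stated under the finiteness caveat preceding (\ref{V_r}). In the exterior region, however, $V_\rho(y)=\frac{m_1}{|y|}$ follows directly from (\ref{V_r}): since $\rho(t)=0$ for $t\ge|y|$, the tail integral vanishes, and $m(|y|)=m_1$. So no regularity beyond $\rho,\sigma\in L^1$, nonnegative and spherically symmetric, is needed. The main obstacle is therefore only bookkeeping: making sure the translation of $\sigma$'s center is handled correctly when invoking a theorem stated for bodies centered at $0$. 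Alternatively, one could expand $\frac{1}{|x-y|}$ in Legendre polynomials, as the preceding remark suggests, but the two-fold Shell Theorem argument is shorter.
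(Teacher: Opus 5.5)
Your proof is correct and follows essentially the paper's route. The paper likewise uses Theorem \ref{Shell Theorem} to reduce one body to a point mass: it does this for $\sigma$, obtaining $G(\rho,\sigma)=m_2\int_{B_{r_1}(0)}\rho(x)\,|x-\mathbf D|^{-1}\,dx$, which is just $m_2V_\rho(\mathbf D)$. The only difference is the last step: the paper evaluates that integral with the Legendre expansion (\ref{expansion of 1 over distance}) and angular orthogonality, whereas your second application of the exterior Shell Theorem, via the translation covariance of $V$, gives it directly and avoids justifying termwise integration of the series.
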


\begin{proof}
	Without loss of generality, we can set up a coordinate system such that the center of $\rho$ is $(0,0,0)$, and the center of $\sigma$ is $\mathbf D = (0,0,R)$. Assume the radius of $\rho$ is $r_1$ and the radius of $\sigma$ is $r_2$, notice $R\geq r_1+r_2$, then thanks to Theorem \ref{Shell Theorem} the mutual gravitational potential energy is given by:
	
	\begin{equation}\label{mutual gravitational potential energy}
		G(\rho, \sigma) = \int_{B_{r_1}(0)} \rho(x)\,V_\sigma(x)\,dx
		= m_2 \int_{B_{r_1}(0)} \frac{\rho(x)}{|x-\mathbf D|}\,dx
	\end{equation}

	We can expand $1/|x-\mathbf D|$ as:
	
	\begin{equation}\label{expansion of 1 over distance}
		\frac{1}{|x-\mathbf D|}
		= \frac{1}{R} \sum_{\ell=0}^\infty \left(\frac{|x|}{R}\right)^\ell P_\ell(\cos\gamma)
	\end{equation}
	
	where $P_\ell$ is the Legendre's polynomial, $\gamma$ is the angle between $x$ and $ \mathbf D$.

	Then plug (\ref{expansion of 1 over distance}) into (\ref{mutual gravitational potential energy}), and notice $\rho$ is spherically symmetric, which implies the Legendre polynomials expansion of $\rho$ is itself — $\rho(x)=\rho(|x|)$), then we know all terms involving $P_\ell(\cos \gamma)$ for $\ell>0$ vanish upon angular integration, leaving only the $\ell=0$ term:
	
	$$
	G(\rho, \sigma) = m_2 \int_{B_{r_1}(0)} \frac{\rho(x)}{|x-\mathbf D|}\,dx=\frac{m_2}{R}\int_{B_{r_1}(0)} \rho(x)\,dx= \frac{m_1 m_2}{R}
	$$
	
	This is the result we want.
\end{proof}

\begin{remark} \label{homogeneous case}
	Notice that in Corollary \ref{gravitational potential energy between 2 spherical bodies}, it is not necessary to assume constant density, i.e., the celestial body need not be homogeneous. In the case of a homogeneous celestial body, we may alternatively use the fact that $\frac{1}{|x-\mathbf D|}$ is a harmonic function in $B_{r_1}(0)$ and apply the mean value property of harmonic function to prove Corollary \ref{gravitational potential energy between 2 spherical bodies}.
\end{remark}

Let's recall that for minimizer $\sigma$, by Theorem \ref{non-rotating} (vii) we have the Euler-Lagrange equation (\ref{EL}):

\begin{equation}
	A^{\prime}(\sigma(x))=\left[V_{\sigma}(x)+\lambda\right]_{+} \tag{EL}
\end{equation}

In particular,

\begin{equation} \label{EL no subscript}
	E_0^{\prime}(\sigma)(x)=\lambda \text {, where } \sigma(x)>0
\end{equation}

More specifically,
\begin{equation} \label{EL r}
	E_0^{\prime}(\sigma)(r)=A^{\prime}(\sigma(r))-V_\rho(r)=\lambda \text {, where } \sigma(r)>0
\end{equation}

Due to (\ref{V_r}), after taking the derivative of (\ref{EL r}), we can have
\begin{equation}\label{diff. of A'(r)}
	\frac{d}{d r} A^{\prime}(\sigma(r))=-r^{-2} m(r) \text {, where } \sigma>0
\end{equation}

Hence $\sigma$ is decreasing when it is positive. Moreover, from (\ref{EL no subscript}) we also have $\sigma(x)=\left(A^{\prime}\right)^{-1}\left(V_\sigma(x)+\lambda\right)$, notice that $\Delta\left(V_\sigma+\lambda\right)=\Delta V_\sigma=-4 \pi \sigma$, set $\Theta_\sigma=V_\sigma+\lambda$, then we have
\begin{equation} \label{laplacian equation for Theta when positive sigma}
	-\Delta \Theta_\sigma=4 \pi \sigma=4 \pi\left(A^{\prime}\right)^{-1}\left(\Theta_\sigma\right) \text {, where } \sigma>0
\end{equation}

We relax the constraints and allow the equation to hold over the entire $\mathbb{R}^3$.
\begin{equation}\label{laplacian equation for Theta in whole domain}
	-\Delta \Theta_\sigma=4 \pi\left(A^{\prime}\right)^{-1}\left(\Theta_\sigma\right)
\end{equation}

\begin{remark} \label{relation between sigma and Theta}
	As mentioned in \cite{LY87}, since $\sigma$ is spherically symmetric, so is $\Theta_\sigma$. Thus (\ref{laplacian equation for Theta when positive sigma}) and (\ref{laplacian equation for Theta in whole domain}) turn out to be a second order ODE. Due to (\ref{gradient of V at 0}) we have $\nabla \Theta_\sigma(0)=0$. If $\Theta_\sigma(0)=\beta>0$, by uniqueness theorem of ODE (for example, Picard's method), we know (\ref{laplacian equation for Theta in whole domain}) has a unique positive solution up to some $R(\beta)$ and spt $\Theta_\sigma=\overline{B_{R(\beta)}(0)}$. Since $\Delta \Theta_\sigma=-4 \pi\left(A^{\prime}\right)^{-1}\left(\Theta_\sigma\right)= -4 \pi \sigma$, we can see $B_{R(\beta)}(0)=\{\sigma>0\}$. Therefore, $0<R(\beta)<\infty$ due to Theorem \ref{non-rotating} (v). 
	
	On the other hand, thanks to Lieb and Yau \cite[Lemma 8]{LY87}, we can show $0<R(\beta)<\infty$ holds true for any radial solutions to (\ref{laplacian equation for Theta in whole domain}) under certain assumptions of initial conditions mentioned above and of pressure \cite{LY87}. Furthermore, note that our preceding arguments are not restricted to a specific mass (e.g., $m=1$), but naturally generalizes to any mass $m>0$. Consequently, the positive solutions to (\ref{laplacian equation for Theta in whole domain}) actually correspond one-to-one with the solutions to (\ref{EL}). We will return to discuss more about these relations later.
\end{remark}

Although the solutions to (\ref{EL}) can correspond one-to-one with the positive solutions to (\ref{laplacian equation for Theta in whole domain}), and one positive solution to (\ref{laplacian equation for Theta in whole domain}) is unique given initial conditions by uniqueness theorem of ODE, so far we cannot conclude that the minimizer is unique. The problem is we do not yet know two minimizers with same mass have to correspond with the same initial conditions of (\ref{laplacian equation for Theta in whole domain}). The next several results help to show the minimizer is truly unique.

\begin{lemma}[Relation between Central Density and Mass {\cite[Lemma 10]{LY87}}] \label{relation between central density and mass}
	Suppose $\sigma_m$ and $\sigma_{\widetilde{m}}$ are minimizers for $E_0(\rho)$ with $\int_{\mathbb{R}^3} \sigma_m\,dx=m$, $\int_{\mathbb{R}^3} \sigma_{\widetilde{m}}\,dx=\widetilde{m}$ respectively and their centers of mass are $(0,0,0)^T$. Let $R_m$ and $R_{\widetilde{m}}$ be the radii of their supports and set $R=\max \left\{R_m, R_{\widetilde{m}}\right\}$. Suppose that $\sigma_m(0)>\sigma_{\widetilde{m}}(0)$, then $\forall 0<r<R$ we have $m(r)>\widetilde{m}(r)$.
\end{lemma}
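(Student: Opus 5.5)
We argue by contradiction via a first-crossing argument. Both minimizers are radial and radially decreasing by Theorem \ref{non-rotating} (iii). We use the radial form of (\ref{EL}) together with (\ref{diff. of A'(r)}). Write $\sigma=\sigma_m$, $\widetilde\sigma=\sigma_{\widetilde m}$, with cumulative masses $m(r)$ and $\widetilde m(r)$.

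First, continuity of $\sigma,\widetilde\sigma$ (Theorem \ref{non-rotating} (vi)) and $\sigma(0)>\widetilde\sigma(0)$ give $\sigma>\widetilde\sigma$ on some $[0,\delta)$. Hence $m(r)=4\pi\int_0^r\sigma s^2ds>\widetilde m(r)$ for $0<r\le\delta$. Now suppose the claim fails, and let $r_0\in(0,R)$ be the first zero of $m-\widetilde m$. Then $m(r)>\widetilde m(r)$ on $(0,r_0)$ and $m(r_0)=\widetilde m(r_0)$.

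Second, we compare $A'(\sigma)$ and $A'(\widetilde\sigma)$ on $[0,r_0]$. Inside the support, (\ref{diff. of A'(r)}) gives $\frac{d}{dr}A'(\sigma(r))=-m(r)/r^2$, and similarly for $\widetilde\sigma$. Set $h(r)=A'(\sigma(r))-A'(\widetilde\sigma(r))$. On any interval where both densities are positive,
\[
h'(r)=-\frac{m(r)-\widetilde m(r)}{r^2}<0 \quad \text{on } (0,r_0),
\]
so $h$ is strictly decreasing there.

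The main obstacle is handling supports when one density vanishes before $r_0$. We will use the global formula $A'(\sigma)=[V_\sigma+\lambda]_+$. The quantity $\Theta_\sigma=V_\sigma+\lambda$ satisfies $\Theta_\sigma'(r)=-m(r)/r^2$ on all $r>0$, by (\ref{diff. of V_r outside 0}). Hence $\Theta_\sigma-\Theta_{\widetilde\sigma}$ is strictly decreasing on $(0,r_0)$, and $A'(\sigma)=[\Theta_\sigma]_+$. Because $x\mapsto[x]_+$ is monotone, we can work with $\Theta$ directly rather than with $A'(\sigma)$. This avoids any case split on the supports.

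Third, we reach the contradiction. Since $m(r_0)=\widetilde m(r_0)$ while $m>\widetilde m$ on $(0,r_0)$, we get
\[
\int_0^{r_0}(\sigma-\widetilde\sigma)s^2\,ds=0.
\]
So $\sigma-\widetilde\sigma$ must be negative on a set of positive measure in $(0,r_0)$. Choose $r_1<r_0$ close to $r_0$ with $\sigma(r_1)<\widetilde\sigma(r_1)$. This is possible because positivity of $\sigma-\widetilde\sigma$ near $r_0^-$ would contradict $m-\widetilde m$ decreasing to $0$ at $r_0$; if $\sigma\le\widetilde\sigma$ only at points, we use continuity. Then $[\Theta_\sigma(r_1)]_+<[\Theta_{\widetilde\sigma}(r_1)]_+$, so $\Theta_\sigma(r_1)<\Theta_{\widetilde\sigma}(r_1)$.

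Since $\Theta_\sigma-\Theta_{\widetilde\sigma}$ is strictly decreasing on $(0,r_0)$, we get $\Theta_\sigma<\Theta_{\widetilde\sigma}$ on $[r_1,r_0]$. By monotonicity of $\phi\circ[\cdot]_+$, this gives $\sigma\le\widetilde\sigma$ on $[r_1,r_0]$.

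We need a contradiction, so we instead look back toward $0$. At $r=0$ we have $A'(\sigma(0))>A'(\widetilde\sigma(0))$ with $\sigma(0)>0$, hence $\Theta_\sigma(0)>\Theta_{\widetilde\sigma}(0)$. By strict monotonicity, $\Theta_\sigma-\Theta_{\widetilde\sigma}$ has exactly one zero $r^*\in(0,r_0)$. It is positive before $r^*$ and negative after. Consequently $\sigma\ge\widetilde\sigma$ on $(0,r^*)$ and $\sigma\le\widetilde\sigma$ on $(r^*,r_0)$, with strict inequality near $0$.

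This single crossing alone does not contradict $\int_0^{r_0}(\sigma-\widetilde\sigma)s^2\,ds=0$, so one more input is needed. We plan to supply it by comparing with $r_0$ and $R$, and if necessary by iterating. After $r_0$, $\Theta_\sigma-\Theta_{\widetilde\sigma}$ is again monotone on each interval where $m-\widetilde m$ has a fixed sign. Following \cite[Lemma 10]{LY87}, we will also use the ODE uniqueness of Remark \ref{relation between sigma and Theta}. At $r_0$ we have equal masses, $\Theta_\sigma<\Theta_{\widetilde\sigma}$, and, by (\ref{V_r}), $\Theta_\sigma'(r_0)=\Theta_{\widetilde\sigma}'(r_0)$. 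Combining this with the exterior formula $\Theta=M/r+\lambda$ from Theorem \ref{Shell Theorem} and the boundary behaviour at $R_m,R_{\widetilde m}$ should yield the contradiction.

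I expect making this final step rigorous to be the hardest part. The natural route is a Sturm-type comparison on $u=r\Theta$, which satisfies $u''=-4\pi r\,\phi(u/r)$ on the support. Comparing $u$ and $\widetilde u$ between consecutive zeros of $u-\widetilde u$ would show that the positions of the zeros are incompatible with the mass equality at $r_0<R$.
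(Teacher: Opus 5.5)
Your first steps are correct. On $(0,r_0)$ the function $\Theta_\sigma-\Theta_{\widetilde\sigma}$ has derivative $-(m(r)-\widetilde m(r))/r^2<0$, is positive at $r=0$, and must become negative before $r_0$, so $\Theta_\sigma(r_0)<\Theta_{\widetilde\sigma}(r_0)$. Since $r_0<R$, also $\Theta_{\widetilde\sigma}(r_0)>0$ (otherwise both densities would vanish on $[r_0,\infty)$), hence $\sigma(r_0)<\widetilde\sigma(r_0)$.

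But the proof stops there: as you note, a single crossing is compatible with $m(r_0)=\widetilde m(r_0)$, and the contradiction is only announced (Sturm comparison for $u=r\Theta$, exterior formula, iteration). This route cannot close in general. Every ingredient you use---radial symmetry, continuity, \eqref{EL} with some $\lambda<0$, $\Theta'=-m(r)/r^2$---holds for every compactly supported radial solution of \eqref{EL}, not only for minimizers. Letting $r\uparrow R$ in the conclusion gives $m\ge\widetilde m$, so your argument would prove that total mass is monotone in central density along the whole family of radial \eqref{EL} solutions. That is false for pressures satisfying (F1)--(F3) whose effective index drops below $4/3$ on a long intermediate density range, since the mass--central-density curve then has turning points. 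So the minimizing property must enter.

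The missing idea is the paper's cut-and-paste argument at the radius where the masses agree. Set $Q=m(r_0)=\widetilde m(r_0)$ and split $\rho=\rho^i+\rho^o$ with $\rho^i=\rho\mathbf{1}_{\{|x|\le r_0\}}$. Since $\sigma^o$ is radial and vanishes in $B_{r_0}(0)$, the Shell Theorem makes $V_{\sigma^o}$ constant on $\overline{B_{r_0}(0)}$. Hence $G(\rho^i,\sigma^o)=Q\,V_{\sigma^o}(0)$ for every $\rho^i$ of mass $Q$ supported there, and $E_0(\rho^i+\sigma^o)=E_0(\rho^i)+E_0^o(\sigma^o)$ with $E_0^o(\rho^o)=U(\rho^o)-\frac{1}{2}G(\rho^o,\rho^o)-Q\int_{\mathbb{R}^3}\rho^o/|x|\,dx$. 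Minimality of $\sigma$ and of $\widetilde\sigma$ forces both $\sigma^i$ and $\widetilde\sigma^i$ to minimize $E_0$ among densities of mass $Q$ supported in $\overline{B_{r_0}(0)}$. Therefore $E_0(\sigma^i)=E_0(\widetilde\sigma^i)$, and $\sigma^i+\widetilde\sigma^o$ is again a minimizer of mass $\widetilde m$. By your inequality $\sigma(r_0)<\widetilde\sigma(r_0)$, this minimizer jumps upward across $|x|=r_0$, contradicting the continuity and radial monotonicity of minimizers in Theorem~\ref{non-rotating}.

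Combined with this step, your proposal gives a complete proof. Your monotonicity argument for $\sigma(r_0)\neq\widetilde\sigma(r_0)$ is also an improvement over the paper's ODE-uniqueness step, which needs $\phi=(A')^{-1}$ to be locally Lipschitz; that is not guaranteed under (F1)--(F3) alone.
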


\begin{proof}
	{
		\rm
		We provide a proof outline here. One can check the proofs in \cite{LY87} and references therein, to understand some step(s) omitted in the following.
		
		We use proof by contradiction. Since $\sigma_m$ and $\sigma_{\widetilde{m}}$ are minimizers, by Theorem \ref{non-rotating} (vi) we know they are continuous. It is easy to see $m>0$ and $R_m>0$, so is $R$. Thus $\sigma_m(0)>\sigma_{\widetilde{m}}(0)$ implies $\exists \hat{r}$, $\forall 0<r<\hat{r}$ we have $m(r)> \widetilde{m}(r)$. If the result of the Lemma does not hold, there is an $r_0<R$ such that $m\left(r_0\right) \leq \widetilde{m}\left(r_0\right)$. By continuity of $m(r)$ and $\widetilde{m}(r)$, there exists an $\widetilde{r}$, where $0<\hat{r} \leq \widetilde{r} \leq r_0$, such that $Q:=m(\widetilde{r})=\widetilde{m}(\widetilde{r})$. We first claim that $\sigma_m(\widetilde{r}) \neq \sigma_{\widetilde{m}}(\widetilde{r})$. If not, that is $\sigma_m(\widetilde{r})=\sigma_{\widetilde{m}}(\widetilde{r})>0$, due to (\ref{gradient of V outside 0}) (\ref{diff. of A'(r)}) (\ref{laplacian equation for Theta when positive sigma}), we have $\Theta_{\sigma_m}(\widetilde{r})=\Theta_{\sigma_{\widetilde{m}}}(\widetilde{r})>0$, $\dot{\Theta}_{\sigma_m}(\widetilde{r})=\dot{\Theta}_{\sigma_{\widetilde{m}}}(\widetilde{r})$. By the uniqueness theorem of ODE, $\Theta_{\sigma_m}=\Theta_{\sigma_{\widetilde{m}}}$ where they are positive. Apply (\ref{laplacian equation for Theta when positive sigma}) we obtain $\sigma_m(0)=\sigma_{\widetilde{m}}(0)$ which contradicts that $\sigma_m(0)> \sigma_{\widetilde{m}}(0)$.
		
		We consider dividing the density functions into inner parts and outer parts.
		That is, let 
		\[
		\sigma_m^i = \sigma_m \mathbf{1}_{\{|x| \leq \bar{r}\}},\quad 
		\sigma_m^o = \sigma_m \mathbf{1}_{\{|x| > \bar{r}\}},
		\]
		\[
		\sigma_{\widetilde{m}}^i = \sigma_{\widetilde{m}} \mathbf{1}_{\{|x| \leq \bar{r}\}},\quad 
		\sigma_{\widetilde{m}}^o = \sigma_{\widetilde{m}} \mathbf{1}_{\{|x| > \bar{r}\}}.
		\]
		Set $E_0^i\left(\rho^i\right)=E_0\left(\rho^i\right)$, and
		$$
		E_0^o\left(\rho^o\right)=U\left(\rho^o\right)-\frac{G\left(\rho^o, \rho^o\right)}{2}-Q \int_{\mathbb{R}^3} \frac{\rho^o(x)}{|x|} \,dx
		$$
		By Remark \ref{Shell Theorem}, $\int_{\mathbb{R}^3} \sigma_m^i\,dx=\int_{\{|x| \leq \widetilde{r}\}} \sigma_m^i\,dx=Q$, and notice
		\begin{align*}
			G\left(\sigma_m^i, \sigma_m^o\right) 
			&= \iint_{\mathbb{R}^3 \times \mathbb{R}^3} \frac{\sigma_m^i(y) \sigma_m^o(x)}{|x-y|} \,dxdy \\
			&= \int_{\{|y| \leq \widetilde{r}\}} \sigma_m^i(y) V_{\sigma_m^o}(y) \,dy \\
			&\stackrel{(\ref{V_r})}{=} \int_{\{|y| \leq f\}} \sigma_m^i(y) V_{\sigma_m^o}(0) \,dy \\
			&= Q V_{\sigma_m^o}(0) \\
			&= Q \int_{\mathbb{R}^3} \frac{\sigma_m^o(x)}{|x|} \,dx.
		\end{align*}
		we obtain $e_0(m)=E_0\left(\sigma_m\right)=E_0^i\left(\sigma_m^i\right)+E_0^o\left(\sigma_m^o\right)$. Similarly we have $e_0(\widetilde{m})=E_0\left(\sigma_{\widetilde{m}}\right)= E_0^i\left(\sigma_m^i\right)+E_0^o\left(\sigma_{\widetilde{m}}^o\right)$. 
		
		Moreover, we claim $E_{0}^{i}\left( \sigma_{m}^{i} \right) = E_{0}^{i}\left( \sigma_{\widetilde{m}}^{i} \right) = {\inf\limits_{\rho^{i} \in U^{i}}{E_{0}^{i}\left( \rho^{i} \right)}}$, where 
		$$U^{i} = \left\{ \rho^{i} \in L^{\frac{4}{3}}\left( \mathbb{R}^{3} \right) \middle| \rho^{i} \geq 0,\rho^{i} \text{ vanishes outside } \overline{B_{\widetilde{r}}(0)},~{\int_{\mathbb{R}^3}\rho^{i}\,dx} = Q \right\}$$ 
		
		In fact, if not, we can find a $\rho^i\in U^i$ with $E_{0}\left( \rho^{i} \right) < E_{0}^{i}\left( \sigma_{m}^{i} \right)$, then $\tilde{\rho}=\rho^i+\sigma_m^o$ has mass $m$ and energy $E_{0}\left( \widetilde{\rho} \right) = E_{0}\left( \rho^{i} \right) + E_{0}^{o}\left( \sigma_{m}^{o} \right) < E_{0}\left( \sigma_{m} \right)$, which contradicts the fact $\sigma_m$ is a minimizer with mass $m$. Now we consider $\widetilde{\sigma}=\sigma_m^i+\sigma_{\widetilde{m}}^o$, we also have $e_0(\widetilde{m})=E_0(\widetilde{\sigma})$ which means $\widetilde{\sigma}$. is also a minimizer with mass $\widetilde{m}$. But due to $\sigma_m(\widetilde{r}) \neq \sigma_{\widetilde{m}}(\widetilde{r})$, we conclude $\widetilde{\sigma}$ is not continuous at $\widetilde{r}$, which violates the regularity of the minimizer proved in Theorem \ref{non-rotating}. Another way to reach a contradiction is to note that $\hat{\sigma}=\sigma_m^i+\sigma_m^o$ is a minimizer with mass $m$. One of $\widetilde{\sigma}$ and $\hat{\sigma}$ must be increasing (jumping up) at $\widetilde{r}$, and this contradicts the symmetric decreasing property of minimizers in Theorem \ref{non-rotating}.
	}
\end{proof}

\begin{remark}
	Notice that in Lemma \ref{relation between central density and mass}, only when $\forall 0<r<R$ we have $m(r)>\widetilde{m}(r)$. Hence it does not say $m>\widetilde{m}$, although we shall later see that this is true (Corollary \ref{relation between central density and mass strenthed} or Remark \ref{asymptotic behaviour of radius and norm in single star case}) after obtaining the uniqueness result. If we knew in advance that $m>\widetilde{m}$, the proof of the uniqueness of minimizer would be trivial.
\end{remark}


So far we only know the energy consists of inertial energy and gravitational interaction energy:
\begin{equation}\label{E_0}
	E_0(\rho)=U(\rho)-\frac{G(\rho, \rho)}{2}
\end{equation}
To find a deeper relation between energy and $A(\rho)$, it will be nice if we can find another expression of $E_0(\rho)$ in which $G(\rho, \rho)$ does not appear. Thanks to Lieb and Yau's arguments \cite{LY87}, we can truly find such expression when $\rho$ is a minimizer, since in this case we have Euler-Lagrange equation which connects $A^{\prime}(\rho)$ with gravitational potential $V_\rho$ and thereby $G(\rho, \rho)$.

\begin{lemma}[Relation between $E_0$ and $A$ {\cite[Lemma 6]{LY87}}]\label{relation between E_0 and A}
	Let $P(s)$ satisfies \ref{F1} \ref{F2} \ref{F3}. Suppose $\sigma$ is continuous, spherically symmetric with finite mass $m=\int_{\mathbb{R}^3} \sigma\,dx$, $\sigma$ has compact support, and satisfies (\ref{EL}) on all of $\mathbb{R}^3$ for $J=0$ and a single $\lambda<0$. Then
	\begin{equation}\label{E_0 by A}
		E_0(\sigma)=\int_{\mathbb{R}^3}\left(4 A(\sigma)-3 \sigma A^{\prime}(\sigma)\right)\,dx
	\end{equation}
\end{lemma}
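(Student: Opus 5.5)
The plan is to combine two identities for $G(\sigma,\sigma)$: one from the Euler--Lagrange equation and one from a virial (Pohozaev-type) identity. Write $\Omega=\{\sigma>0\}=B_{R_\sigma}(0)$. By the spherical symmetry and radial monotonicity coming from (\ref{diff. of A'(r)}), $\Omega$ is a ball.

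\textbf{Step 1: the Euler--Lagrange identity.} On $\Omega$, (\ref{EL}) gives $A'(\sigma)=V_\sigma+\lambda$. Multiplying by $\sigma$ and integrating over $\Omega$, where $\sigma$ carries all the mass, yields
\[
\int_{\mathbb{R}^3}\sigma A'(\sigma)\,dx=G(\sigma,\sigma)+\lambda m .
\]

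\textbf{Step 2: the virial identity.} Pair the equation with $x\cdot\nabla$ rather than with $\sigma$. On $\Omega$ we have $\nabla P(\sigma)=\sigma\nabla V_\sigma$. This is exactly (\ref{EP'}); it follows because $P(s)=sA'(s)-A(s)$ and $P'(s)=sA''(s)$ give $\nabla P(\sigma)=\sigma\nabla A'(\sigma)$, and alternatively it follows from (\ref{diff. of A'(r)}) in radial form. Written radially, $\frac{d}{dr}P(\sigma(r))=-\sigma(r)m(r)r^{-2}$.

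Multiply this by $4\pi r^3$ and integrate over $(0,R_\sigma)$. Integrating by parts on the left, the boundary terms vanish: at $r=0$ because of the factor $r^3$, and at $r=R_\sigma$ because $\sigma$ is continuous with $\sigma(R_\sigma)=0$ and $P(0)=0$. The left side becomes $-3\int P(\sigma)\,dx$.

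The right side is $-\int_0^{R_\sigma}4\pi r\sigma(r)m(r)\,dr=-\int_0^{R_\sigma} m'(r)m(r)/r\,dr$. Integrating by parts again gives $-\bigl[\tfrac{m^2}{2r}\bigr]_0^{R_\sigma}-\int_0^{R_\sigma}\tfrac{m^2}{2r^2}\,dr$. Using (\ref{V_r}) and the Shell Theorem (Theorem \ref{Shell Theorem}), this last expression should equal $-\tfrac12 G(\sigma,\sigma)$. One can check this directly: $G=\int\sigma V_\sigma=\int_0^{R_\sigma} m'(r)\bigl(m(r)/r+\int_r^{R_\sigma}m'(t)/t\,dt\bigr)dr$, and a Fubini exchange gives $G=2\int_0^{R_\sigma} m m'/r\,dr$. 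The result is the virial identity
\[
3\int_{\mathbb{R}^3}P(\sigma)\,dx=\tfrac12 G(\sigma,\sigma).
\]
If $A'$ is not differentiable, avoid $P'$ altogether by working directly with the absolutely continuous function $r\mapsto P(\sigma(r))$. Its a.e. derivative follows from the relation $A'(\sigma(r))=V_\sigma(r)+\lambda$ together with $P=sA'-A$, in the form $dP=\sigma\,dA'(\sigma)$, which can be justified by a Stieltjes/chain-rule argument for monotone $\sigma$.

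\textbf{Step 3: combine.} Substitute $P(\sigma)=\sigma A'(\sigma)-A(\sigma)$ into the virial identity to get $G=6\int(\sigma A'(\sigma)-A(\sigma))\,dx$. Then
\[
E_0(\sigma)=U-\tfrac12 G=\int\bigl(A(\sigma)-3\sigma A'(\sigma)+3A(\sigma)\bigr)\,dx=\int\bigl(4A(\sigma)-3\sigma A'(\sigma)\bigr)\,dx,
\]
which is (\ref{E_0 by A}). Step 1 is not actually needed for the formula itself; it only provides a consistency check.

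\textbf{Main obstacle.} The main difficulty is justifying the integrations by parts in Step 2 with limited regularity. This requires showing that $r\mapsto P(\sigma(r))$ is absolutely continuous on $[0,R_\sigma]$ and that the boundary terms vanish. Both should follow from continuity and compact support of $\sigma$, from $\sigma\in L^\infty$, and from $m(r)=O(r^3)$ near $0$.
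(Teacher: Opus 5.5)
Your proposal is correct and follows essentially the same route as the paper: the paper also differentiates (\ref{EL r}) radially and multiplies by $4\pi r^3\sigma$. It then uses $\sigma\,\frac{d}{dr}A'(\sigma)=\frac{d}{dr}\bigl(\sigma A'(\sigma)-A(\sigma)\bigr)$, which is your $\frac{d}{dr}P(\sigma)=-\sigma m r^{-2}$, and integrates by parts to get the virial identity $3\int_{\mathbb{R}^3}P(\sigma)\,dx=\tfrac12 G(\sigma,\sigma)$; the only difference is that it identifies $4\pi\int_0^\infty m(r)\sigma(r)r\,dr$ with $\tfrac12 G(\sigma,\sigma)$ by restricting the double integral to $\{|x|\le|y|\}$ and applying (\ref{V_r}), rather than by your Fubini computation. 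Your caveat about justifying the chain rule without differentiability of $\sigma$ is well placed, since the paper asserts that identity without comment; it is settled cleanly by writing $P(\sigma)=\int_0^{A'(\sigma)}(A')^{-1}(t)\,dt$ and noting that $A'(\sigma(r))=V_\sigma(r)+\lambda$ is $C^1$ on $\{\sigma>0\}$.
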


\begin{proof}
	{
		\rm
		If $\sigma=0$, then (\ref{E_0 by A}) is trivial since both sides are 0. If $\sigma$ is not zero function, since $\sigma \in L^1\left(\mathbb{R}^3\right) \cap C_c^0\left(\mathbb{R}^3\right)$, by Proposition \ref{diff. of poten.} we know $V_\rho$ is continuously differentiable, thus we can take the derivative of (\ref{EL r}) where $\sigma>0$ (notice $\{\sigma>0\}$ is open). Since $\sigma$ is spherically symmetric, we have exactly (\ref{diff. of A'(r)}): 
		$$\frac{d}{d r} A^{\prime}(\sigma(r))=-r^{-2} m(r)$$ 
		Multiply it by $4 \pi r^3 \sigma(r)$ and integrate over $\mathbb{R}$, we obtain 
		$$-4 \pi \int_0^{\infty} r^3\left(\frac{d}{d r} A^{\prime}(\sigma)\right) \sigma d r=4 \pi \int_0^{\infty} m(r) \sigma(r) r d r$$
		Easy to see $\iint_{\{|x| \leq|y|\}} \frac{\sigma(x) \sigma(y)}{|x-y|} \,dx\,dy=\iint_{\{|x| \geq|y|\}} \frac{\sigma(x) \sigma(y)}{|x-y|} \,dx\,dy$. Then we compute
		
		$$
		\begin{aligned}
			\frac{G(\sigma, \sigma)}{2} &=\frac{1}{2}\left(\iint_{\{|x| \leq|y|\}} \frac{\sigma(x) \sigma(y)}{|x-y|} \,dx\,dy+\iint_{\{|x| \geq|y|\}} \frac{\sigma(x) \sigma(y)}{|x-y|} \,dx\,dy\right) \\
			& =\iint_{\{|x| \leq|y|\}} \frac{\sigma(x) \sigma(y)}{|x-y|} \,dx\,dy \\
			& =\iint_{\mathbb{R}^3\times \mathbb{R}^3} \frac{\sigma(x) \sigma(y)\mathbf{1}_{\{|x| \leq|y|\}}}{|x-y|}\,dx\,dy \\
			& =\int_{\mathbb{R}^3} \sigma(y) \,dy \int_{\mathbb{R}^3} \frac{\sigma(x) \mathbf{1}_{\{|x| \leq|y|\}}}{|x-y|} \,dx \\
			& =\int_{\mathbb{R}^3} \sigma(y) V_{\left\{\sigma \mathbf{1}_{\{|\cdot| \leq|y|\}}\right\}}(y) \,dy \\
			& \stackrel{(\ref{V_r})}{=} \int_{\mathbb{R}^3} \frac{\sigma(|y|) m(|y|)}{|y|}  \,dy \\
			& =4 \pi \int_0^{\infty} m(r) \sigma(r) r d r
		\end{aligned}
		$$
		
		Notice we use spherical coordinate transformation in the last identity.
		
		On the other hand, we notice 
		$$\left(\frac{d}{d r} A^{\prime}(\sigma)\right) \sigma=\frac{d}{d r}\left(\sigma A^{\prime}(\sigma)-A(\sigma)\right)$$
		
		 And $\sigma$ vanishes at 0 and outside a large ball, so is $\sigma A^{\prime}(\sigma)-A(\sigma)$. Then we use spherical coordinate transformation and integration by parts, and obtain
		
		$$
		\begin{aligned}
			3 \int_{\mathbb{R}^3}\left(\sigma A^{\prime}(\sigma)-A(\sigma)\right)(x) \,dx & =12 \pi \int_0^{\infty}\left(\sigma A^{\prime}(\sigma)-A(\sigma)\right) r^2 d r \\
			& =-4 \pi \int_0^{\infty} r^3\left(\frac{d}{d r} A^{\prime}(\sigma)\right) \sigma d r
		\end{aligned}
		$$
		
		Collect the results above we obtain $E_0(\sigma)=\int_{\mathbb{R}^3} A(\sigma)\,dx-\frac{G(\sigma, \sigma)}{2}=\int_{\mathbb{R}^3}(4 A\left(\sigma)-3 \sigma A^{\prime}(\sigma)\right)\,dx$.
	}
\end{proof}

\begin{remark}\label{relation between E_0 and A for minimizer}
	Although the differentiability of $\sigma$ is used in Lieb and Yau's proof \cite{LY87}, we refine their proof as above and do not necessarily need the differentiability of $\sigma$. This is the reason we do not need to assume \ref{F4} holds. If $P(s)$ satisfies \ref{F1}\ref{F2}\ref{F3}, then thanks to Theorem \ref{non-rotating}, a minimizer $\sigma_m$ (after translation) meets the conditions in Lemma \ref{relation between E_0 and A}, thus (\ref{E_0 by A}) holds true for $\sigma_m$.
\end{remark}

\begin{remark}\label{relation between G and U}
	In particular, if $P(\sigma)$ satisfies the polytropic equations of state, then $\sigma A^{\prime}(\sigma)=\gamma A(\sigma)=\frac{K \gamma}{\gamma-1} \sigma^\gamma$. Thanks to Lemma \ref{relation between E_0 and A}, we have $E_0(\sigma)=(4-3 \gamma) U(\sigma)=(4-3 \gamma) \int_{\mathbb{R}^3} A(\sigma)\,dx$, and $G(\sigma, \sigma)=(6 \gamma-6) U(\sigma)$. Actually, inspired by Lieb and Yau \cite[Remark after Lemma 6]{LY87}, we claim 
	$$G(\sigma, \sigma)=(6 \gamma-6) U(\sigma)$$ 
	for minimizers using the scaling relations similar to those in Section \ref{section3-scaling relations}. That is, (with an abuse of notations) let $\sigma_\lambda(x)=\lambda^3 \sigma(\lambda x)$ we have $\int_{\mathbb{R}^3} \sigma_\lambda\,dx=\int_{\mathbb{R}^3} \sigma\,dx$ thus $E_0(\sigma) \leq E_0\left(\sigma_\lambda\right)$, $\left.\frac{d}{d \lambda} E_0\left(\sigma_\lambda\right)\right|_{\lambda=1}=0$. Moreover, $E_0\left(\sigma_\lambda\right)=\lambda^{3 \gamma-3} U(\sigma)-\frac{\lambda}{2} G(\sigma, \sigma)$, then we have $\left.\frac{d}{d \lambda} E_0\left(\sigma_\lambda\right)\right|_{\lambda=1}=(3 \gamma-3) U(\sigma)-\frac{G(\rho, \rho)}{2}=0$, and hence the claim follows.
\end{remark}

In order to obtain uniqueness result for minimizers, Lieb and Yau \cite[Lemma 11 and Remark following]{LY87} use a proof by contradiction. They said the only property of $A(s)$ used was the convexity of $g$ (also see \cite[Section 3]{McC06}), where $g(s)=4 A(s)-3 A^{\prime}(s) s$. However, a closer look reveals that the proof needs the strict inequality $g''(s) < 0$ on some interval, whereas convexity only gives $g''(s) \leq 0$. Nevertheless, it turns out we indeed can show that the convexity of $g$, together with the specific structure of $A$, implies $g^{\prime \prime}<0$ holds true in a small interval, which is already enough to make a contradiction in Proposition \ref{uniqueness in non-rotating cases}.

In the following lemma, instead of considering $g$, we consider $f(s):=A^{\prime}\left(s^3\right)$ and one can check $g^{\prime \prime}(s)<0$ is equivalent to $f^{\prime \prime}(s)>0$ when $s>0$.
\begin{lemma}[Positive Second Order Derivative of $A^{\prime}\left(s^3\right)$]\label{positive 2nd order derivative}
	If $P(s)$ satisfies \ref{F1}\ref{F2}\ref{F3}\ref{F4}, $A(s) \in C^3\left(\mathbb{R}^{+}\right)$(that is $A(s)$ has third order continuous derivative when $s>0)$, and $f(s):=A^{\prime}\left(s^3\right)$ is convex. Then $\forall \delta>0, \exists s_\delta \in(0, \delta)$, such that $f^{\prime \prime}\left(s_\delta\right)>0$.
\end{lemma}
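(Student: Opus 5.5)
Suppose the contrary. Then there is $\delta>0$ with $f''(s)\le 0$ for all $s\in(0,\delta)$. Convexity of $f$ gives $f''\ge 0$ wherever $f''$ exists; here $f\in C^2((0,\infty))$, because $A\in C^3(\mathbb{R}^+)$ and $s\mapsto s^3$ is smooth. Hence $f''\equiv 0$ on $(0,\delta)$, so $f$ is affine there: $f(s)=as+b$ for $s\in(0,\delta)$. I will derive a contradiction from the boundary behaviour of $A'$ near $0$ recorded in Lemma \ref{properties of A}.

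The first step pins down the constants. Lemma \ref{properties of A} (iv) says $A'$ is continuous on $[0,\infty)$ with $A'(0)=0$. Letting $s\to 0^+$ in $f(s)=A'(s^3)=as+b$ therefore gives $b=0$, so $A'(t)=a\,t^{1/3}$ for $t\in(0,\delta^3)$. By Lemma \ref{properties of A} (v), $A'$ is strictly increasing and $A'(0)=0$, so $A'>0$ on $(0,\infty)$ and hence $a>0$.

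The second step produces the contradiction. Lemma \ref{properties of A} (iii) gives $\lim_{t\to 0}A'(t)t^{-1/3}=0$, but on $(0,\delta^3)$ we have $A'(t)t^{-1/3}\equiv a>0$. Equivalently, one can argue through the pressure: from $A''(t)=P'(t)/t$ (Lemma \ref{properties of A} (iv)/(vii)) we get $P'(t)=\tfrac a3 t^{1/3}$ near $0$, hence $P(t)=\tfrac a4 t^{4/3}$ (using $P(0)=0$, which follows from \ref{F2}). This contradicts \ref{F2}, since then $P(t)t^{-4/3}\to a/4\neq 0$. Either way the assumption fails, and so for every $\delta>0$ there is $s_\delta\in(0,\delta)$ with $f''(s_\delta)>0$.

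The argument is essentially this short, so I do not expect a serious obstacle. The only delicate point is to justify using the boundary limits at $0$ (continuity of $A'$ at $0$, and \ref{F2}), since the regularity hypothesis $A\in C^3$ is only assumed on the open half-line. That is why I extend the affine identity to the endpoint through continuity of $A'$ on $[0,\infty)$ rather than through any differentiability at $0$. I also need \ref{F4} only to ensure $A''=P'/t$, so that the $C^2$ regularity of $f$ is consistent with the given data.
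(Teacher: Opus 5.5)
Your proof is correct and is essentially the paper's argument. In both, failure of the conclusion combined with convexity forces $f''\equiv 0$ near $0$, which pins down the critical behaviour $A'(t)\propto t^{1/3}$ (equivalently $P(t)\propto t^{4/3}$) that \ref{F2} rules out. The difference is only bookkeeping: the paper rewrites $f''=0$ as the ODE $3tP''(t)=P'(t)$, solves $P'=Ct^{1/3}$, gets $C=0$ from L'H\^opital and \ref{F2}, and contradicts \ref{F1}; you instead integrate $f$ directly, remove the constant using $A'(0)=0$, and contradict Lemma \ref{properties of A} (iii), which avoids any use of $P''$.
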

\begin{proof}
	{
		\rm
		Since $P^{\prime}(s)=s A^{\prime \prime}(s)$, we know $A(s) \in C^3\left(\mathbb{R}^{+}\right)$ is equivalent to $P(s) \in C^2\left(\mathbb{R}^{+}\right)$. Notice $f^{\prime}(s)=3 s^2 A^{\prime \prime}\left(s^3\right)=\frac{3 P^{\prime}\left(s^3\right)}{s}$, and $f^{\prime \prime}(s)=\frac{9 s^2 P^{\prime \prime}\left(s^3\right) \cdot s-3 P^{\prime}\left(s^3\right)}{s^2}$. Since $f(s)$ is convex, $f^{\prime \prime}(s)=\frac{9 s^2 P^{\prime \prime}\left(s^3\right) \cdot s-3 P^{\prime}\left(s^3\right)}{s^2} \geq 0$. If the conclusion does not hold, then $\exists \delta_0>0$, such that $\forall s \in\left(0, \delta_0\right), f^{\prime \prime}(s)=0$, which implies $3 t P^{\prime \prime}(t)-P^{\prime}(t)=0$ for all $t \in \left(0, \delta_0^3\right)$, then $P^{\prime}(t)=C t^{\frac{1}{3}}$ for some $C$ when $t \in\left(0, \delta_0^3\right)$. Then $\lim\limits_{t \rightarrow 0} \frac{P^{\prime}(t)}{t^{\frac{1}{3}}}=C$. Since we already know $P(t)$ satisfies \ref{F2}, that is $\lim\limits_{t \rightarrow 0} \frac{P(t)}{t^{\frac{4}{3}}}=0$, then $P \rightarrow 0$ as $t \rightarrow 0$. Then we can apply L'Hôpital's rule \cite[Theorem 5.13]{Rud76} and obtain $$0=\lim\limits_{t \rightarrow 0} \frac{P(t)}{t^{\frac{4}{3}}}= \lim\limits_{t \rightarrow 0} \frac{3}{4} \cdot \frac{P^{\prime}(t)}{t^{\frac{1}{3}}}=\frac{3 C}{4}$$
		which means $C=0$. Then $P^{\prime}(t)=C t^{\frac{1}{3}}=0$ when $t \in\left(0, \delta_0^3\right)$, notice again by (\ref{F2}) $\lim\limits_{t \rightarrow 0} \frac{P(t)}{t^{\frac{4}{3}}}=0$, hence $P(t)=0$. It contradicts the assumption \ref{F1} that $P(t)$ is strictly increasing for $t>0$. Therefore, we know $\forall \delta>0, \exists s_\delta \in(0, \delta)$, such that $f^{\prime \prime}\left(s_\delta\right)>0$.	
	}
\end{proof}

\begin{remark} \label{positive 2nd order derivative in a interval}
	Since $f^{\prime \prime}$ is continuous, by Lemma \ref{positive 2nd order derivative} we know $\forall \delta>0$, $\exists\left(s_1, s_2\right) \subset(0,2 \delta)$ such that $f^{\prime \prime}(s)>0$ if $s \in\left(s_1, s_2\right)$.
\end{remark}

\begin{proposition}[Uniqueness of Minimizer in Non-rotating Cases \cite{LY87} \cite{McC06}] \label{uniqueness in non-rotating cases}
	If $P(s)$ satisfies \ref{F1}\ref{F2}\ref{F3}\ref{F4}, $A(s) \in C^3\left(\mathbb{R}^{+}\right)$, and $A^{\prime}\left(s^3\right)$ is convex, then the minimizer of $E_0(\rho)$ is unique up to translation. In particular, if $P(s)$ satisfies the polytropic equations of state with index $\gamma>\frac{4}{3}$, the minimizer of $E_0(\rho)$ is unique up to translation.
\end{proposition}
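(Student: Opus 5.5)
Suppose, for contradiction, that $\sigma$ and $\widetilde{\sigma}$ are two minimizers with the same mass $m>0$ that do not coincide after translation. The plan is to show that their energies must differ, using the representation $E_0=\int g(\cdot)\,dx$ of Lemma \ref{relation between E_0 and A} together with a majorization argument driven by Lemma \ref{relation between central density and mass}.

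\textbf{Reduction to distinct central densities.} By Theorem \ref{non-rotating} (iii), (v), (vi) we translate both to be spherically symmetric, radially decreasing, continuous and compactly supported about the origin.

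If $\sigma(0)=\widetilde{\sigma}(0)$, then $\Theta_\sigma(0)=\Theta_{\widetilde{\sigma}}(0)$. This is not yet clear because the Lagrange multipliers might a priori differ. To handle it, note that $\lambda$ is determined by the mass via Theorem \ref{non-rotating} (viii) only when $e_0$ is differentiable, so I would instead argue directly. By (\ref{EL}), $A'(\sigma(0))=V_\sigma(0)+\lambda$. For the ODE (\ref{laplacian equation for Theta in whole domain}) with data $\Theta(0)=\beta$, $\dot\Theta(0)=0$ (Remark \ref{relation between sigma and Theta}), the density $\sigma=\phi(\Theta_+)$ is determined by $\beta=A'(\sigma(0))$ alone. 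Hence equal central densities give $\sigma=\widetilde{\sigma}$ by ODE uniqueness (Picard, using $\phi\in C^1$ from Lemma \ref{properties of A} (vii)).

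So we may assume $\sigma(0)>\widetilde{\sigma}(0)$. Lemma \ref{relation between central density and mass} with $m=\widetilde m$ then gives $m_\sigma(r)>m_{\widetilde{\sigma}}(r)$ for all $0<r<R$.

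\textbf{Majorization.} Set $g(s)=4A(s)-3sA'(s)$. By Lemma \ref{relation between E_0 and A} and Remark \ref{relation between E_0 and A for minimizer}, $E_0(\sigma)=\int g(\sigma)$ and $E_0(\widetilde{\sigma})=\int g(\widetilde{\sigma})$, and both equal $e_0(m)$.

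A direct computation with $f(s)=A'(s^3)$ gives $f''(s)=-3s\,g''(s^3)$. So convexity of $f$ means $g$ is concave on $(0,\infty)$, and Remark \ref{positive 2nd order derivative in a interval} gives an interval $(t_1,t_2)\subset(0,\widetilde{\sigma}(0))$ on which $g''<0$. Such an interval exists once $\delta$ is chosen small.

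Also $g(0)=0$ and $g'(s)=A'(s)-3P'(s)\to0$ as $s\to0$ (Lemma \ref{properties of A} (iv) and \ref{F4} with \ref{F2}, which forces $P'(0)=0$). Then the layer-cake identity
\[
\int_{\mathbb{R}^3} g(\sigma)\,dx-\int_{\mathbb{R}^3} g(\widetilde{\sigma})\,dx=\int_0^\infty g''(t)\Big(\int_{\mathbb{R}^3}[\sigma-t]_+\,dx-\int_{\mathbb{R}^3}[\widetilde{\sigma}-t]_+\,dx\Big)\,dt
\]
holds. It follows from $g(s)=\int_0^\infty g''(t)[s-t]_+\,dt$, justified by the boundedness of $\sigma$ and $\widetilde{\sigma}$ and the compactness of their supports.

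\textbf{Sign of the bracket.} Fix $t\in(0,\widetilde{\sigma}(0))$ and let $r_t$ satisfy $\{\widetilde{\sigma}>t\}=B_{r_t}$, so $0<r_t<R$. Then
\[
\int[\widetilde{\sigma}-t]_+=m_{\widetilde{\sigma}}(r_t)-t|B_{r_t}|<m_\sigma(r_t)-t|B_{r_t}|\le\int[\sigma-t]_+ .
\]
For $t\ge\widetilde{\sigma}(0)$ the bracket is trivially nonnegative. Hence the bracket is $\ge0$ everywhere and $>0$ on $(0,\widetilde{\sigma}(0))$.

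Since $g''\le0$ everywhere and $g''<0$ on $(t_1,t_2)$, the right-hand side is strictly negative. This gives $e_0(m)=E_0(\sigma)<E_0(\widetilde{\sigma})=e_0(m)$, a contradiction.

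\textbf{Polytropic case.} Here $A'(s^3)=\frac{K\gamma}{\gamma-1}s^{3\gamma-3}$ with $3\gamma-3>1$, which is convex and $C^3$ on $\mathbb{R}^+$, so the hypotheses hold.

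\textbf{Main obstacle.} I expect the delicate points to be the regularity near $s=0$ needed for the layer-cake identity, namely integrability of $g''$ near $0$ and $g'(0)=0. If these fail, I would apply the identity to $g$ restricted to $[\varepsilon,\infty)$ with boundary terms and let $\varepsilon\to0$. The other delicate point is making sure the strict-concavity interval lies inside the range $(0,\widetilde{\sigma}(0))$ where the mass comparison is strict; Remark \ref{positive 2nd order derivative in a interval} settles this by allowing $\delta$ arbitrarily small.
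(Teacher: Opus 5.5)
Your proposal is correct. Its skeleton matches the paper's: ODE uniqueness when the central densities agree, Lemma~\ref{relation between central density and mass} to get $m_\sigma(r)>m_{\widetilde\sigma}(r)$, $E_0=\int g$ from Lemma~\ref{relation between E_0 and A}, and concavity of $g$ with strict concavity near $0$. The decisive comparison, however, is done differently.

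The paper applies the tangent-line inequality $g(a)-g(b)\le g'(b)(a-b)$ at $b=\widetilde\sigma(r)$. It then integrates by parts in $r$ via $4\pi r^2(\sigma-\widetilde\sigma)=\frac{d}{dr}(m-\widetilde m)$, arriving at $0\le-\int_0^{\widetilde R}(m-\widetilde m)\,g''(\widetilde\sigma)\,\dot{\widetilde\sigma}\,dr<0$. This route needs three extra facts:
\begin{itemize}
\item $\widetilde\sigma\in C^1$ on its positivity set with $\dot{\widetilde\sigma}<0$;
\item $R\le\widetilde R$, so that the boundary term vanishes;
\item a continuity argument producing an $r$-interval on which $g''(\widetilde\sigma(r))<0$.
\end{itemize}
Your exact layer-cake identity uses only continuity and radial monotonicity of $\widetilde\sigma$, so that $\{\widetilde\sigma>t\}=B_{r_t}$ with $0<r_t<\widetilde R$. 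The strictness is then located directly in the level variable $t$.

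The two arguments are really the same majorization inequality written in different variables. Substituting $t=\widetilde\sigma(r)$ turns the paper's final integral into $\int_0^{\widetilde\sigma(0)}g''(t)\bigl(m_\sigma(r_t)-m_{\widetilde\sigma}(r_t)\bigr)\,dt$, which is exactly the bound your bracket estimate produces. The difference is that the paper reaches it through a lossy tangent-line step, whereas your identity is exact.

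The endpoint issue you flag as the main obstacle is harmless. We have $g(0)=0$ and $g'(0^+)=0$. Since $g''\le 0$ and $\int_\varepsilon^s g''\,dt=g'(s)-g'(\varepsilon)\to g'(s)$, monotone convergence gives $g''\in L^1(0,s)$ and $g(s)=\int_0^s g''(t)(s-t)\,dt$. Tonelli therefore applies to each nonpositive integrand, and no $\varepsilon$-cutoff is needed. Likewise, the worry in your equal-central-density step dissolves at once: (\ref{EL}) at the origin gives $\Theta_\sigma(0)=A'(\sigma(0))$ regardless of $\lambda$.
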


\begin{proof}
	{
		\rm
		We provide a proof outline here. One can check the proofs in \cite{LY87} and references therein, to understand some step(s) omitted in the following.
		
		Let $f(s)=A^{\prime}\left(s^3\right)$, and $g(s)=4 A(s)-3 A^{\prime}(s) s$, by direct computation we obtain $f^{\prime \prime}(s)=6 s A^{\prime \prime}\left(s^3\right)+9 A^{\prime \prime \prime}\left(s^3\right) s^4$, and $g^{\prime \prime}(s)=-2 A^{\prime \prime}(s)- 3 s A^{\prime \prime \prime}(s)$ when $s>0$. (Notice $A(s) \in C^3\left(\mathbb{R}^{+}\right)$ guarantees those derivatives exist and are continuous.) When $s>0$, $f$ is convex then $f^{\prime \prime} \geq 0$ implies $g^{\prime \prime} \leq 0$, then $g$ is concave. Therefore, we have $\frac{g(a)-g(b)}{a-b}\left\{\begin{array}{l}\leq g^{\prime}(b), b<a \\ \geq g^{\prime}(b), b>a\end{array}\right.$, therefore, $\forall a, b \geq 0$, we have $g(a)-g(b) \leq g^{\prime}(b)(a-b)$.
		
		If $m=0$, then minimizer $\sigma_m=0$ is unique. If $m>0$, without loss of generality, we assume $m=$ 1. Given two minimizers $\sigma$ and $\widetilde{\sigma}$, thanks to Theorem \ref{non-rotating}, after translation, we can assume $\sigma$ and $\widetilde{\sigma}$ are continuous, spherically symmetric, centered at $0$, differentiable when they are positive, and radially decreasing. Then we know $\sigma(0)>0, \widetilde{\sigma}(0)>0$, and $\nabla \sigma(0)=\nabla \widetilde{\sigma}(0)=0$ (due to being symmetric and differentiable at $0$). If $\sigma(0)=\widetilde{\sigma}(0)$, due to (\ref{gradient of V at 0}) (\ref{laplacian equation for Theta when positive sigma}) we have $\Theta_\sigma(0)=\Theta_{\widetilde{\sigma}}(0)$, and $\dot{\Theta}_\sigma(0)=\dot{\Theta}_{\widetilde{\sigma}}(0)$. By the uniqueness theorem of ODE, $\Theta_\sigma=\Theta_{\widetilde{\sigma}}$ where they are positive. Due to (\ref{laplacian equation for Theta when positive sigma}) we obtain $\sigma=\widetilde{\sigma}$. If $\sigma(0) \neq \widetilde{\sigma}(0)$, without loss of generality, we assume $\sigma(0)>\widetilde{\sigma}(0)>0$. Let $R>0$ and $\widetilde{R}>0$ be the radii of their supports. By Lemma \ref{relation between central density and mass}, $m(r)>\widetilde{m}(r)$ for all $0<r<\max \{R, \widetilde{R}\}$. Then $R \leq \widetilde{R}$; otherwise $m=\widetilde{m}(\widetilde{R})<m(\widetilde{R})<m$, which is a contradiction. Then since $\sigma$ and $\widetilde{\sigma}$ are minimizers, thanks to Lemma \ref{relation between E_0 and A}, use spherical coordinate transformation then we have
		$$
		\begin{aligned}
			0&=E_0(\sigma)-E_0(\widetilde{\sigma})\\
			&=\int_{\mathbb{R}^3} g(\sigma)-g(\widetilde{\sigma})\,dx\\
			&=4 \pi \int_0^{\widetilde{R}}(g(\sigma(r))-g(\widetilde{\sigma}(r))) r^2 d r \\
			&\leq 4 \pi \int_0^{\widetilde{R}} g^{\prime}(\widetilde{\sigma}(r))(\sigma(r)-\widetilde{\sigma}(r)) r^2 d r
		\end{aligned}
		$$

		Notice that since $P(\rho)$ satisfies \ref{F4}, $\sigma$ and $\widetilde{\sigma}$ are continuously differentiable when they are positive due to reasons similar to those mentioned in the proof of Theorem \ref{non-rotating} (vi). Integrating the last integral by parts and using the definitions of $m(r)$ and $\widetilde{m}(r)$, we have $0 \leq -\int_0^{\widetilde{R}}(m(r)-\widetilde{m}(r)) g^{\prime \prime}(\widetilde{\sigma}(r)) \dot{\widetilde{\sigma}}(r) d r$. Notice on $(0, \widetilde{R})$, we have, $g^{\prime \prime}(\widetilde{\sigma}(r)) \leq 0$, 
		$m(r)>\widetilde{m}(r)$. By Theorem \ref{non-rotating} (vii) we know $\widetilde{\sigma}(r)=\phi \circ W_{\widetilde{\sigma}}$ where $\widetilde{\sigma}$ is positive, here $\phi=\left(A^{\prime}\right)^{-1}$ and $W_{\widetilde{\sigma}}=V_{\widetilde{\sigma}}+\lambda$. Since $P$ satisfies \ref{F4}, by Lemma \ref{properties of A} we know $\phi$ is differentiable with $\phi^{\prime}(s)=\frac{1}{A^{\prime \prime}(\phi(s))}>0$. From (\ref{diff. of V_r outside 0}) we have $\frac{d}{d r} W_{\widetilde{\sigma}}= \frac{-\widetilde{m}(r)}{r^2}<0$. Therefore, $\dot{\widetilde{\sigma}}(r)<0$ holds for all $r \in(0, \widetilde{R})$. By Lemma \ref{positive 2nd order derivative} and Remark \ref{positive 2nd order derivative in a interval}, together with the relation between $f^{\prime \prime}$ and $g^{\prime \prime}$ we know $\exists\left(a_1, a_2\right) \subset (0, \widetilde{\sigma}(0))$, such that $g^{\prime \prime}(s)<0$ when $s \in\left(a_1, a_2\right)$. 
		
		On the other hand, notice $\widetilde{\sigma}(\widetilde{R})=0$, by continuous of $\widetilde{\sigma}$ we know $\exists\left(r_1, r_2\right) \subset(0, \widetilde{R})$, such that $g^{\prime \prime}(\widetilde{\sigma}(r))<0$. Therefore, we get $0 \leq -\int_0^{\widetilde{R}}(m(r)-\widetilde{m}(r)) g^{\prime \prime}(\widetilde{\sigma}(r)) \dot {\widetilde{\sigma}}(r) d r<0$, which leads to a contradiction.
	}
\end{proof}

\begin{corollary}\label{relation between central density and mass strenthed}
	Let $\sigma_m$ be the minimizer for mass $m$, then the mass $m$ can be viewed as a strictly increasing function of minimizers' central density.
\end{corollary}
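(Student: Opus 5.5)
The plan is to combine the uniqueness result (Proposition \ref{uniqueness in non-rotating cases}) with the comparison of mass profiles in Lemma \ref{relation between central density and mass}. We work under the hypotheses of Proposition \ref{uniqueness in non-rotating cases}, so that for each $m>0$ the minimizer $\sigma_m$ is unique up to translation. After centering, $\sigma_m(0)$ is therefore a well-defined number. Define the map $m\mapsto \sigma_m(0)$ on $(0,\infty)$, with the value $0$ at $m=0$ since $\sigma_0=0$.

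\textbf{Step 1: the map is injective.} Suppose $m\neq\widetilde m$ but $\sigma_m(0)=\sigma_{\widetilde m}(0)$. As in the first case of the proof of Proposition \ref{uniqueness in non-rotating cases}, we have $\Theta_{\sigma_m}(0)=\Theta_{\sigma_{\widetilde m}}(0)$ by (\ref{laplacian equation for Theta when positive sigma}) and $\nabla\Theta=0$ at the origin by (\ref{gradient of V at 0}). The two Lagrange multipliers may differ, but this does not matter: $\Theta$ solves the autonomous radial ODE (\ref{laplacian equation for Theta in whole domain}) with the same initial data. ODE uniqueness then gives $\Theta_{\sigma_m}=\Theta_{\sigma_{\widetilde m}}$ where they are positive. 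Hence $\sigma_m=\sigma_{\widetilde m}$, and so $m=\widetilde m$, a contradiction.

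\textbf{Step 2: the map is strictly increasing.} Take $\sigma_m(0)>\sigma_{\widetilde m}(0)$ and let $R_m,R_{\widetilde m}$ be the support radii. By Lemma \ref{relation between central density and mass}, $m(r)>\widetilde m(r)$ for all $0<r<R=\max\{R_m,R_{\widetilde m}\}$.
\begin{itemize}
\item If $R_m<R_{\widetilde m}$, then $\widetilde m=\widetilde m(R_{\widetilde m})$. Evaluating the strict inequality at $r$ slightly below $R_{\widetilde m}$ and letting $r\to R_{\widetilde m}$ gives $m\ge \widetilde m$.
\item If $R_m\ge R_{\widetilde m}$, then for $r\in(R_{\widetilde m},R_m)$ we have $m\ge m(r)>\widetilde m(r)=\widetilde m$. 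If instead $R_m=R_{\widetilde m}$, the limit $r\to R$ again gives $m\ge\widetilde m$.
\end{itemize}
In every case $m\ge\widetilde m$. Injectivity from Step 1 excludes equality, so $m>\widetilde m$.

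Consequently $\sigma_m(0)>\sigma_{\widetilde m}(0)$ implies $m>\widetilde m$. By contraposition and injectivity, $m>\widetilde m$ implies $\sigma_m(0)>\sigma_{\widetilde m}(0)$. So central density is a strictly increasing function of mass, equivalently mass is a strictly increasing function of central density on the range of central densities. One can also note, via the scaling of Section \ref{section3-scaling relations} in the polytropic case, that this range is all of $(0,\infty)$, though that is not needed.

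The main obstacle is the boundary case in Step 2. Lemma \ref{relation between central density and mass} gives only a strict inequality on the open interval, and in the limit this yields only $m\ge\widetilde m$. Injectivity from Step 1 is what rules out equality. So the key point to check carefully is that ODE uniqueness from the center applies even when the multipliers $\lambda_m\neq\lambda_{\widetilde m}$. This holds because the ODE for $\Theta$ does not involve $\lambda$ explicitly.
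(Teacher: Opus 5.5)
Your argument is correct, and its core is the paper's. Lemma~\ref{relation between central density and mass} gives $m\ge\widetilde m$, and equality is then excluded. Your three-case analysis works, though letting $r\to R$ in $m\ge m(r)>\widetilde m(r)$ covers all cases at once.

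However, you attribute the exclusion of equality to the wrong fact. Step 1 says that \emph{different} masses give different central densities. It says nothing about $m=\widetilde m$ with $\sigma_m(0)>\sigma_{\widetilde m}(0)$, i.e.\ two minimizers of the same mass with different central densities. What rules that out is Proposition~\ref{uniqueness in non-rotating cases}. This is the well-definedness of $m\mapsto\sigma_m(0)$ that you assumed in your first paragraph, and it is exactly what the paper's proof invokes. So the ``main obstacle'' you identify is misplaced. The substantive input is the convexity argument behind uniqueness. ODE uniqueness with different multipliers is routine; the paper already uses it across different masses in the proof of Lemma~\ref{relation between central density and mass}.

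Step 1 is still needed, for a different purpose. It shows that the central density determines the mass, i.e.\ that mass is a well-defined function of central density at all. It is also what gives the converse implication $m>\widetilde m\Rightarrow\sigma_m(0)>\sigma_{\widetilde m}(0)$. The paper's proof only establishes $\sigma_m(0)>\sigma_{m'}(0)\Rightarrow m>m'$. It leaves the ``function'' part implicit, although that part follows from the same ODE argument used in the first case of the proof of Proposition~\ref{uniqueness in non-rotating cases}. On that point your write-up is more complete than the paper's.
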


\begin{proof}
	{
		\rm
		If $\sigma_m$ and $\sigma_{m^{\prime}}$ are minimizers for mass $m$ and $m^{\prime}$ respectively, thanks to Theorem \ref{non-rotating} (iii) we can assume they are spherically symmetric and radially decreasing, and $\sigma_m(0)$ and $\sigma_{m^{\prime}}(0)$ are their central densities. If $\sigma_m(0)>\sigma_{m^{\prime}}(0)$, due to Lemma \ref{relation between central density and mass} we know $m \geq m^{\prime}$. But Proposition \ref{uniqueness in non-rotating cases} tells us that $m=m^{\prime}$ is impossible unless $\sigma_m=\sigma_{m^{\prime}}$. Thus, we have $m>m^{\prime}$.
	}
\end{proof}

Now we know the minimizer satisfies (\ref{EL}) and it is unique. A natural question is: given a radial solution $\sigma_{(\alpha)}$ to (\ref{EL}) with central density (initial data) $\sigma_{(\alpha)}(0)=\alpha$, is it a minimizer? It turns out the answer is yes, but we first need to discuss more about Remark \ref{relation between sigma and Theta} to tie together minimizers, radial solutions to (\ref{EL}) and positive radial solutions to (\ref{laplacian equation for Theta in whole domain}). We first review a lemma in \cite{LY87} (but for simplicity, here we assume the pressure satisfies the polytropic equations of state).

\begin{lemma}[Compact Support of $\Theta$ {\cite[Lemma 8]{LY87}}]\label{compact of positive Theta}
	Assume the pressure satisfies the polytropic equations of state with index $\gamma>\frac{3}{2}$. Let $\Theta$ be the radial solution(i.e., $\Theta$ is spherically symmetric) to (\ref{laplacian equation for Theta in whole domain}) obtained by integrating outwards from $r=$ 0 with the initial conditions $\Theta(0)=\beta>0$ and $\dot{\Theta}(0)=0$. Then there is some $R(\beta) \in (0, \infty)$ such that $\Theta(R(\beta))=0$. For $r<R(\beta), \dot{\Theta}(r)<0$. In particular, $\Theta (r)$ is decreasing up to $R(\beta)$.
\end{lemma}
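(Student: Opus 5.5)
We reduce the statement to the classical Lane–Emden theory. In the polytropic case $A'(s)=\frac{K\gamma}{\gamma-1}s^{\gamma-1}$, so $\phi(t)=(A')^{-1}(t)=c\,t^{n}$ with $n:=\frac{1}{\gamma-1}$ and $c=\left(\frac{\gamma-1}{K\gamma}\right)^{n}$. The hypothesis $\gamma>\frac{3}{2}$ is exactly $n<2$; in particular $n<5$, which is the regime we need. For radial $\Theta$, equation (\ref{laplacian equation for Theta in whole domain}) becomes the ODE
\[
\ddot\Theta+\frac{2}{r}\dot\Theta=-4\pi c\,[\Theta]_+^{\,n},\qquad \Theta(0)=\beta,\ \dot\Theta(0)=0.
\]
Integrated, it reads $r^2\dot\Theta(r)=-4\pi c\int_0^r s^2[\Theta(s)]_+^{n}\,ds$.

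First we obtain local existence near $r=0$ and remove the singular coefficient. Writing the ODE as the fixed-point problem
\[
\Theta(r)=\beta-4\pi c\int_0^r s^{-2}\int_0^s t^2[\Theta(t)]_+^{n}\,dt\,ds,
\]
we apply a contraction argument on $[0,\epsilon]$. The map is well defined since $[\cdot]_+^n$ is continuous, and it is Lipschitz near $\beta>0$. This yields a $C^2$ solution with $\dot\Theta(0)=0$, which can be continued as long as $\Theta$ stays positive and bounded. From the integrated form, while $\Theta>0$ we have $r^2\dot\Theta<0$ for every $r>0$. Hence $\Theta$ is strictly decreasing and bounded by $\beta$, so the solution extends to the maximal interval $[0,R(\beta))$, where $R(\beta)$ is the first zero, or $\infty$ if none exists. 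This already gives $\dot\Theta<0$ on $(0,R(\beta))$. It remains to show $R(\beta)<\infty$.

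The main step is to rule out $\Theta>0$ on all of $(0,\infty)$. Suppose this holds. Since $\Theta$ is decreasing, $\Theta(r)\to L\ge 0$. If $L>0$, then $r^2\dot\Theta\le -\frac{4\pi c}{3}L^n r^3$, so $\dot\Theta\le -C r$ and $\Theta\to-\infty$, a contradiction. Thus $L=0$. Next, $r^2\dot\Theta$ is decreasing and negative, so $\dot\Theta\le -a/r^2$ for $r\ge1$ with $a>0$. Integrating from $r$ to $\infty$ gives $\Theta(r)\ge a/r$. Feeding this back in,
\[
r^2|\dot\Theta(r)|\ge 4\pi c\int_1^r s^2\,a^n s^{-n}\,ds\sim C\,r^{3-n}.
\]
Since $n<2$, this grows faster than $r$, so $|\dot\Theta|\gtrsim r^{1-n}$. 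This is not integrable at $\infty$ because $1-n>-1$, so $\Theta$ would become negative in finite $r$, a contradiction. This is where $\gamma>\frac{3}{2}$ enters: the bootstrap closes in one step exactly when $n<2$.

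The step I expect to need the most care is this last bootstrap; the local existence and monotonicity are routine. If the one-step bootstrap had failed (for $2\le n<5$), I would instead have used the Pohozaev identity from Lemma \ref{relation between E_0 and A}. With the argument above the range stated in the lemma is covered, and it yields $R(\beta)\in(0,\infty)$ with $\Theta(R(\beta))=0$ and $\dot\Theta<0$ on $(0,R(\beta))$.
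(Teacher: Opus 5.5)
Your proposal is correct and follows essentially the same route as the paper. Assuming $R(\beta)=\infty$, both arguments use the lower bound $\Theta(r)\ge c/r$ to get $\rho\gtrsim r^{-n}$ with $n=\frac{1}{\gamma-1}<2$, and then use that $\int^\infty r^{1-n}\,dr=\infty$. The paper states the contradiction as $t\rho(t)\in L^1$, which comes from $\Theta(r)=\beta-4\pi\int_0^r(\frac1t-\frac1r)t^2\rho(t)\,dt>0$, against $t\rho(t)\gtrsim t^{1-n}$. You state it as $\int^\infty|\dot\Theta|\,dr=\infty$ forcing $\Theta$ negative, which is the same statement after Fubini. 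What you add is a self-contained proof of the $c/r$ bound (via monotonicity of $r^2\dot\Theta$), together with the local existence and monotonicity details, all of which the paper delegates to Lieb--Yau.
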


\begin{proof}
	{
		\rm
		$\Theta(0)=\beta>0$ implies $R(\beta) \neq 0$ if $R(\beta)$ exists. Set $\rho=\left(A^{\prime}\right)^{-1}(\Theta)$, by Shell Theorem, or the arguments similar as (\ref{V_r}) and (\ref{gradient of V outside 0}) we have $\Theta^{\prime}(r)=-4 \pi \int_0^r \frac{t^2}{r^2} \rho(t) d t$, from which we get $\Theta^{\prime}(r)<0$ for $r<R(\beta)$. Integrate it again and use Fubini's theorem, we have
		$$
		\Theta(r)=\beta-4 \pi \int_0^r\left(\frac{1}{t}-\frac{1}{r}\right) t^2 \rho(t) d t
		$$
		
		Suppose $R(\beta)=\infty$, set $g(t)=t \rho(t)$, we could obtain the result that $g \in L^1([0, \infty), d r)$ \cite[Lemma 8]{LY87}. However, one can also show $\Theta(r) \geq \frac{c}{r}$ for large $r$ \cite[Lemma 8]{LY87}. Then for $r$ large enough we have $$\rho= \left(A^{\prime}\right)^{-1}(\Theta)=C \Theta^{\frac{1}{\gamma-1}} \geq c r^{-\frac{1}{\gamma-1}}$$ 
		Notice $\gamma>\frac{3}{2}$ implies $1-\frac{1}{\gamma-1}>-1$, thus $\int_0^{\infty} g(r) d r=\int_0^{\infty} r \rho(r) d r=\infty$, which make a contradiction. For more details one can refer to \cite[Lemma 8 and Proposition 9]{LY87}.
	}
\end{proof}


\begin{remark}\label{nonpositive of Theta after R(beta)}
	Actually $\Theta^{\prime}(r)=-4 \pi \int_0^r \frac{t^2}{r^2} \rho(t) d t$ implies $\Theta^{\prime}(r)<0$ when $\Theta$ is positive. Therefore, by contradiction arguments we can show $\Theta(r) \leq 0$ when $r \geq R(\beta)$, where $R(\beta)$ is given in Lemma \ref{compact of positive Theta}.
\end{remark}

Due to Remark \ref{relation between sigma and Theta}, we know given a radial solution $\sigma_{(\alpha)}$ to (\ref{EL}) with compact support, we can find a positive radial solution $\Theta_{\sigma_{(\alpha)}}$ to (\ref{laplacian equation for Theta in whole domain}). On the other hand, Lemma \ref{compact of positive Theta} and Remark \ref{nonpositive of Theta after R(beta)} tell us a radial solution $\Theta$ to (\ref{laplacian equation for Theta in whole domain}) with positive initial data is only positive up to $r=R(\beta)$. Let 

$$\sigma_\Theta(r)=\left\{\begin{array}{r}\frac{-\Delta \Theta}{4\pi}\text{,    }r \leq R(\beta) \\ 0\text{,    }r >R(\beta) \end{array}\right.$$

Then one can see $\sigma_\Theta$ satisfies (\ref{EL}) with $\lambda = \Theta(0)-V_{\sigma_\Theta}(0)$.

We claim any radial solution $\sigma_{(\alpha)}$ to (\ref{EL}) has compact support. In fact, due to (\ref{diff. of A'(r)}) we know $\sigma_{(\alpha)}$ is decreasing when it is positive. If its support is unbounded, then $\sigma_{(\alpha)}(r)>0$ for all $r \geq 0$. Then we can construct a solution $\Theta_{\sigma_{(\alpha)}}$ to (\ref{laplacian equation for Theta in whole domain}) with $\sigma_{(\alpha)}=\left(A^{\prime}\right)^{-1}\left(\Theta_{\sigma_{(\alpha)}}\right)$ (one can also see (\ref{laplacian equation for Theta when positive sigma})), which implies $\Theta_{\sigma_{(\alpha)}}$ also has unbounded support. But it contracts Remark \ref{nonpositive of Theta after R(beta)}.

Therefore, we can actually remove the compact support assumption at the beginning in this remark that ``a radial solution $\sigma_{(\alpha)}$ to (\ref{EL}) with compact support", and show the solutions to (\ref{EL}) indeed correspond one-to-one with the positive solutions to (\ref{laplacian equation for Theta in whole domain}).

Moreover, from $\lambda = \Theta(0)-V_{\sigma_\Theta}(0)$ above, one can see the choice of $\lambda$ in (\ref{EL}) is not arbitrary but is related to the choice of $\Theta(0)$, which in turn is related to the central density $\sigma_{(\alpha)}(0)=\alpha$ (Recall $\sigma_{(\alpha)}=\left(A^{\prime}\right)^{-1}\left(\Theta_{\sigma_{(\alpha)}}\right)$).

Based on the above observation, we can now establish the equivalent relation between radial solutions to (\ref{EL}) (with specific central densities) and minimizers:

\begin{proposition}[Equivalence between (\ref{EL}) Solutions and Minimizers {\cite[Lemma 12]{LY87}}]\label{equivalence between solutions and minimizers}
	Let $\sigma_{(\alpha)}$ be the unique nonnegative radial solution of (\ref{EL}) with central density $\sigma(0)=\alpha$, then $\sigma_{(\alpha)}$ is the unique minimizer for $E_0(\rho)$ among $mR(\mathbb{R}^3)$, where $m=\int_{\mathbb{R}^3}\sigma_{(\alpha)}\,dx$. That is, all radial solutions of (\ref{EL}) parametrized by their central density are in fact minimizers of $E_0(\rho)$ among $mR(\mathbb{R}^3)$ for some mass $m$.
\end{proposition}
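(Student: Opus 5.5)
Fix $\alpha>0$ and let $\sigma_{(\alpha)}$ be the radial solution of (\ref{EL}) with central density $\alpha$. By the discussion after Remark \ref{nonpositive of Theta after R(beta)}, it has compact support $\overline{B_{R(\beta)}(0)}$ and mass $m(\alpha):=\int_{\mathbb{R}^3}\sigma_{(\alpha)}\,dx\in(0,\infty)$. Here $\beta=A'(\alpha)$ is the initial datum of the associated $\Theta$.

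The plan is to exploit the map $\alpha\mapsto m(\alpha)$ and compare it with minimizers. Write $\mathcal{M}$ for the set of central densities realized by minimizers. By Theorem \ref{non-rotating} (i), for every mass $m>0$ there is a minimizer $\sigma_m$. By Theorem \ref{non-rotating} (iii) and (vii), after translation $\sigma_m$ is a radial solution of (\ref{EL}). By the ODE uniqueness in Remark \ref{relation between sigma and Theta}, we get $\sigma_m=\sigma_{(\sigma_m(0))}$. So every minimizer belongs to the family $\{\sigma_{(\alpha)}\}$. Moreover, $\alpha\mapsto m(\alpha)$ restricted to $\mathcal{M}$ is strictly increasing (Corollary \ref{relation between central density and mass strenthed}) and surjective onto $(0,\infty)$. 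The statement then reduces to showing $\mathcal{M}=(0,\infty)$, i.e. that no central density is ``skipped''.

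\textbf{Step 1: continuity of $m(\alpha)$.} The solution $\Theta$ of the radial ODE depends continuously on $\beta$, by continuous dependence on initial data for the Lipschitz (on the positivity region, via \ref{F4} and Lemma \ref{properties of A} (vii)) right-hand side $4\pi\phi(\Theta)$. The zero $R(\beta)$ is transversal, since $\dot\Theta(R(\beta))=-m/R^2<0$, so $R(\beta)$ is continuous. Hence $m(\alpha)=R(\beta)^2|\dot\Theta(R(\beta))|$ is continuous in $\alpha$. Near $r=0$ the ODE is singular, so I would handle continuous dependence through the integral equation $\Theta(r)=\beta-4\pi\int_0^r(\tfrac1t-\tfrac1r)t^2\phi(\Theta(t))\,dt$.

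\textbf{Step 2: density of $\mathcal{M}$ and closedness.} The function $m$ is strictly increasing on $\mathcal{M}$ and maps $\mathcal{M}$ onto $(0,\infty)$. Suppose $\alpha_0\notin\mathcal{M}$. Let $\alpha_-=\sup(\mathcal{M}\cap(0,\alpha_0))$ and $\alpha_+=\inf(\mathcal{M}\cap(\alpha_0,\infty))$. I will show $\mathcal{M}$ is closed: a limit of minimizers $\sigma_{(\alpha_k)}\to\sigma_{(\alpha)}$ uniformly with supports in a common ball (Step 1) gives $E_0(\sigma_{(\alpha_k)})\to E_0(\sigma_{(\alpha)})$ and $e_0(m(\alpha_k))\to e_0(m(\alpha))$ by continuity of $e_0$ (Theorem \ref{non-rotating} (ii)), so $\sigma_{(\alpha)}$ is a minimizer. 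Closedness gives $\alpha_\pm\in\mathcal{M}$ and $m(\alpha_-)<m(\alpha_+)$. Surjectivity then forces some $\alpha'\in\mathcal{M}$ with $m(\alpha')$ strictly between them. Monotonicity on $\mathcal{M}$ places $\alpha'$ in $(\alpha_-,\alpha_+)$, contradicting the choice of $\alpha_\pm$. One must also exclude that $\mathcal{M}$ is bounded above or away from $0$. Surjectivity onto large and small masses, combined with continuity of $m$ on compact $\alpha$-ranges, handles this. If $\mathcal{M}\subset[a,b]$, then $m$ would be bounded there, contradicting surjectivity onto $(0,\infty)$. The lower end needs $m(\alpha)\to0$ as $\alpha\to 0$ along $\mathcal{M}$ only, which surjectivity already gives.

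\textbf{Conclusion and main obstacle.} Hence every $\sigma_{(\alpha)}$ is a minimizer of mass $m(\alpha)$, and it is the unique one by Proposition \ref{uniqueness in non-rotating cases}. The main obstacle is Step 1, specifically continuous dependence at the singular point $r=0$ and at the free boundary $R(\beta)$, where $\phi$ may fail to be Lipschitz near $0$. I would first try a contraction argument for the integral equation on small $[0,r_0]$, using that $\phi$ is $C^1$ on $[\beta/2,2\beta]$. After that, standard ODE theory applies up to where $\Theta$ is small, and transversality of the zero handles the remaining piece.
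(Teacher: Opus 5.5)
Your proof is correct, and it reaches the conclusion by a different route than the paper.

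\textbf{The paper's route.} Following \cite{LY87}, the paper asserts three facts and defers their proofs to \cite{LY87}: the map $\Gamma\colon m\mapsto\alpha_m$ is a homeomorphism from $G=(0,\infty)$ onto $D$ (your $\mathcal{M}$), and $D$ is closed in $(0,\infty)$. From connectedness of $G$ it then concludes that $D$ is a closed interval homeomorphic to $(0,\infty)$, hence $D=(0,\infty)$. This requires continuity of $\Gamma$ itself, i.e.\ continuity of minimizers as the mass varies, which needs its own justification. It would typically come from a compactness argument using uniform $L^\infty$ and support bounds, with the limit identified via uniqueness.

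\textbf{Your route.} You avoid continuity of $\Gamma$ altogether. Using the strict monotonicity of Corollary \ref{relation between central density and mass strenthed}, you observe that a strictly increasing surjection from a relatively closed $\mathcal{M}\subset(0,\infty)$ onto $(0,\infty)$ cannot have gaps. The same closedness argument excludes $\mathcal{M}$ being bounded above or bounded away from $0$. In each case an extremal element of $\mathcal{M}$ exists and a whole range of masses would be missed. Your sentence on the lower end attributes this to surjectivity alone, but it really uses closedness as well. So the only analytic input you need is closedness of $\mathcal{M}$, which you obtain from continuous dependence of the radial ODE together with continuity of $e_0$. Continuity of $\Gamma$ then follows a posteriori, since a strictly monotone bijection between intervals is a homeomorphism.

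\textbf{Trade-off.} Your route relies on the comparison Lemma \ref{relation between central density and mass} in addition to uniqueness, while the topological route needs only uniqueness (to define $\Gamma$) plus continuity in both directions. In exchange, you make explicit the continuous-dependence details at the singular point $r=0$ and at the transversal zero $R(\beta)$, which the paper leaves to \cite{LY87}.
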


\begin{proof}	
	
	Here we provide a proof outline. One can refer to \cite[Lemma 12]{LY87} to understand some step(s) omitted here.
	We first note that the one-to-one correspondence between a central density of zero and a total mass of zero is trivial, as both correspond to the vacuum (zero function). Hence we can now turn to the case where mass is positive.
	
	The idea of Lieb and Yau is: let $G=(0,\infty)$, and
	$$D=\{\alpha \mid \text{The solution } \sigma_{(\alpha)} \text{ to } \eqref{EL} \text{ is a minimizer with some total mass } \|\sigma_{(\alpha)}\|_{L^1} \in G\}$$ 
	For each $m\in G$, we know by Theorem \ref{non-rotating} and Proposition \ref{uniqueness in non-rotating cases} there is a unique minimizer $\rho_m$, and a unique central density $\alpha_m$ (see also Corollary \ref{relation between central density and mass strenthed}). We let $\Gamma: G \rightarrow D$ denote this map from $m\in G$ to $\alpha_m \in D$.
	
	Then in {\cite[Lemma 12]{LY87}},  one can show $\Gamma$ (or $\Gamma^{-1}$) is a homeomorphism between $G$ and $D$, and $D$ turns out to be closed in $\mathbb{R}^+$. $G$ is connected, so is $D$, hence $D$ is a closed interval in $\mathbb{R}^+$. Since $\Gamma^{-1}$ is a homeomorphism, then $D$ can only be $\mathbb{R}^+$.
\end{proof}

\begin{remark}
	If assumption \ref{F3} is replaced by \ref{F3'}, then $G$ in the proof of Proposition \ref{equivalence between solutions and minimizers} should be $(0,m(K))$, where $m(K)$ is the Chandrasekhar mass mentioned in Remark \ref{collapse}.
\end{remark}
\section{Scaling Relations between Stars with Different Mass} \label{section3-scaling relations}

The goal of this section is to provide a description of scaling method and scaling relations between solutions with different mass, which can help us to find more quantitative properties. We have been using those results in the previous arguments and will continue to use them in the following. We assume the polytropic equation of state $P(\rho)=K\rho^\gamma$ with index $\gamma>\frac{4}{3}$ holds true from this section.

\subsection{Scaling relations between solutions to partial differential equations}\label{subsection3.1-Scaling relations between solutions to partial differential equations}

It is known that the Navier-Stokes equations (and many other PDEs) have a natural scaling. The (incompressible) Navier-Stokes equations have the form:
\begin{equation}\label{NS}
	\left\{\begin{array}{l}
		\partial_t u+(u \cdot \nabla) u+\nabla \pi=v \Delta u \\
		\operatorname {div}(u)=0
	\end{array}\right. \tag{NS}
\end{equation}
where $\pi$ is the pressure and $v>0$ is the viscosity coefficient. For simplicity we assume $v=1$ is constant. Formally, we take the divergence of the first equation and use the second equation to eliminate some terms and obtain $\Delta \pi=-\operatorname{div}((u \cdot \nabla) u)$, i.e. $$\operatorname{div}((u \cdot \nabla) u)+\operatorname {div}(\nabla \pi)=0$$
Then we can introduce Leray projection operator $\mathbb{P}=I d+\nabla(-\Delta)^{-1} \operatorname {div}$ \cite{Cob24} to get rid of the pressure term from the first equation and obtain:
\begin{equation}\label{NS'}
	\left\{\begin{array}{l}
		\partial_t u+\mathbb{P}((u \cdot \nabla) u)=\Delta u \\
		\operatorname {div}(u)=0
	\end{array}\right. \tag{NS'}
\end{equation}

For any function $f(t, x)$, we denote $f_\lambda(t, x):=\lambda f\left(\lambda^\alpha t, \lambda^\beta x\right)$. Let a function $u(t, x)$ be a solution to (\ref{NS'}), notice that $\mathbb{P}\left(f_\lambda\right)=(\mathbb{P}(f))_\lambda$, then by direct computation we have
$$
\partial_t u_\lambda+\mathbb{P}\left(\left(u_\lambda \cdot \nabla\right) u_\lambda\right)-\Delta u_\lambda=\lambda^\alpha\left(\partial_t u\right)_\lambda+\lambda^{1+\beta}(\mathbb{P}((u \cdot \nabla) u))_\lambda-\lambda^{2 \beta}(\Delta u)_\lambda
$$

Therefore, when $\alpha=2$, $\beta=1$, $u_\lambda(t, x)=\lambda u\left(\lambda^2 t, \lambda x\right)$ is also a solution to (\ref{NS'}). Actually $\forall c \in \mathbb{R}$, $c u\left(\lambda^2 t, \lambda x\right)$ is a solution since (\ref{NS'}) is linear with respect to $c$.

However, when we consider the reduced Euler-Poisson equations (\ref{EP'}) with $\omega=0$, or the corresponding Euler-Lagrange equations (\ref{EL}) with $J=0$ (non-rotating case), since the pressure $P(\rho)=K \rho^\gamma$ or the function $A^{\prime}(\rho)=\frac{K}{\gamma-1} \rho^\gamma$ is not linear and there is an extra term $V_\rho, c$ may not be chosen arbitrarily. With an abuse of notation, we assume $g_{A, B}(x)=A g(B x)$. Let a function $\sigma(x)$ be a solution to (\ref{EL}), i.e., $A^{\prime}(\sigma(x))=\left[V_\sigma(x)+\lambda\right]_{+}$, then we have $$A^{\prime}\left(\sigma_{\widetilde{A}, B}(x)\right)=\frac{K \gamma}{\gamma-1}\left(\sigma_{\widetilde{A}, B}(x)\right)^{\gamma-1}=\widetilde{A}^{\gamma-1} A^{\prime}(\sigma(B x))$$
$$V_{\sigma_{A, B}}(x)=\int_{\mathbb{R}^3} \frac{\widetilde{A} \sigma(B y)}{|x-y|} \,dy=\widetilde{A} B^{-2} \int_{\mathbb{R}^3} \frac{\sigma(B y)}{|B x-B y|} \,d(B y)=\widetilde{A} B^{-2} V_\sigma(B x)$$ 
Hence if $\widetilde{A}^{\gamma-1}=\widetilde{A} B^{-2}$ (i.e., $B=\widetilde{A}^{\frac{2-\gamma}{2}}$), we will have $\sigma_{\widetilde{A}, B}$ is also a solution to (\ref{EL}): $$A^{\prime}\left(\sigma_{\widetilde{A}, B}(x)\right)=\left[V_{\sigma_{\bar{A}, B}}(x)+\lambda_{\widetilde{A}, B}\right]_{+}$$
with $\lambda_{\widetilde{A}, B}=\widetilde{A} B^{-2}$.

Moreover, if we know $g$ has mass $m$ (again without loss of generality we assume $m \neq 0$ unless otherwise specified) and hope $g_{A, B}$ has mass 1. Then $\int_{\mathbb{R}^3} g_{A, B}\,dx=A B^{-3}$ $\int_{\mathbb{R}^3} g(B x) \,d(B x)=A B^{-3} m=1$. Therefore, we can solve $A=m^{-\frac{2}{3 \gamma-4}}, B=m^{\frac{\gamma-2}{3 \gamma-4}}$.

\begin{theorem}[Relations between Solutions to Equations with Different Mass] \label{scaling relation for PDEs}
	Let a function $g(x)$ with mass $m$ be a solution to (\ref{EL}) with $J=0$ and Lagrange multiplier $\lambda$, or to (\ref{EP'}) with $\omega=0$, then $g_{A, B}(x)$ is a solution to (\ref{EL}) with $J=0$ and Lagrange multiplier $\lambda_{A, B}$, or to (\ref{EP'}) with $\omega=0$, where $A=m^{-\frac{2}{3 \gamma-4}}, B=m^{\frac{\gamma-2}{3 \gamma-4}}$. Moreover, $g_{A, B}(x)$ has mass 1.
\end{theorem}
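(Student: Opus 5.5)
The plan is a direct scaling computation, carried out separately for (\ref{EL}) and for (\ref{EP'}), reusing the identities derived just before the statement. Write $g_{A,B}(x)=A\,g(Bx)$ with $A,B>0$. There are three scaling rules to check:
\begin{align*}
A'\bigl(g_{A,B}(x)\bigr)&=A^{\gamma-1}A'\bigl(g(Bx)\bigr),\\
V_{g_{A,B}}(x)&=AB^{-2}V_g(Bx),\\
\int_{\mathbb{R}^3}g_{A,B}\,dx&=AB^{-3}m .
\end{align*}
The first holds because $A'(s)=\frac{K\gamma}{\gamma-1}s^{\gamma-1}$ under the polytropic law. The second follows from the change of variables $y\mapsto By$ in the Newtonian kernel. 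The third is the change of variables in the mass integral.

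First I will fix the exponents. Requiring $A^{\gamma-1}=AB^{-2}$ (so both sides of (\ref{EL}) scale by the same factor) and $AB^{-3}m=1$ (unit mass) gives two equations in $\log A$ and $\log B$. The first gives $B^2=A^{2-\gamma}$. Substituting into the second gives $A^{1-\frac{3(2-\gamma)}{2}}=m^{-1}$, that is $A^{\frac{3\gamma-4}{2}}=m^{-1}$. Hence $A=m^{-\frac{2}{3\gamma-4}}$ and $B=A^{\frac{2-\gamma}{2}}=m^{\frac{\gamma-2}{3\gamma-4}}$. Since $\gamma>\frac43$, the denominator $3\gamma-4$ is positive and these are well-defined positive numbers for $m>0$.

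For (\ref{EL}), I evaluate the equation for $g$ at the point $Bx$ and multiply by $A^{\gamma-1}=AB^{-2}>0$. Positive homogeneity of $[\cdot]_+$ then gives
\[
A'\bigl(g_{A,B}(x)\bigr)=A^{\gamma-1}\bigl[V_g(Bx)+\lambda\bigr]_+=\bigl[V_{g_{A,B}}(x)+A^{\gamma-1}\lambda\bigr]_+ .
\]
So $g_{A,B}$ solves (\ref{EL}) with multiplier $\lambda_{A,B}:=A^{\gamma-1}\lambda$. The paper's text writes $\lambda_{A,B}=AB^{-2}$; I would interpret this as the factor $AB^{-2}$ multiplying $\lambda$. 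I also need to check that $g_{A,B}$ lies in the admissible class: it is nonnegative, and it lies in $L^{4/3}$ because $\|g_{A,B}\|_{L^{4/3}}^{4/3}=A^{4/3}B^{-3}\|g\|_{L^{4/3}}^{4/3}$.

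For (\ref{EP'}), I use $P(s)=Ks^\gamma$. This gives $\nabla P(g_{A,B})(x)=A^\gamma B\,(\nabla P(g))(Bx)$ and $g_{A,B}\nabla V_{g_{A,B}}(x)=A\cdot AB^{-2}\cdot B\,(g\nabla V_g)(Bx)$. The two prefactors are $A^\gamma B$ and $A^2B^{-1}$, and they agree exactly when $A^{\gamma-1}=AB^{-2}$. So the same $A,B$ make the left side of (\ref{EP'}) for $g_{A,B}$ a constant multiple of the left side for $g$ evaluated at $Bx$, which vanishes.

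The only delicate point is the meaning of the gradient where (\ref{EP'}) is taken in the sense of Theorem \ref{non-rotating} (ix), that is, including the boundary of the support. Since $x\mapsto Bx$ is a smooth diffeomorphism, differentiability of $P(g)$ at $Bx$ is equivalent to differentiability of $P(g_{A,B})=A^\gamma P(g)(B\,\cdot)$ at $x$, so the chain rule applies pointwise everywhere. The case $m=0$ is excluded, or trivial since $g=0$. The mass normalization then follows from the third scaling rule.
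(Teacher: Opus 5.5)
Your proposal is correct and follows the paper's proof essentially step for step. You use the same three scaling identities (for $A'$, for $V_g$, and for the mass), the same matching condition $A^{\gamma-1}=AB^{-2}$ (equivalently $A^{\gamma-2}B^2=1$) for both (\ref{EL}) and (\ref{EP'}), and the same normalization $AB^{-3}m=1$. Your reading of the multiplier as $\lambda_{A,B}=AB^{-2}\lambda=A^{\gamma-1}\lambda$ is the right correction of the paper's shorthand, and it is consistent with the paper's later use of $\lambda_m=A^{1-\gamma}\lambda_1$.
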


\begin{proof}
	{
		\rm
		For the case of Euler-Lagrange equations (\ref{EL}), one can see the arguments above. For the case of the reduced Euler-Poisson equations (\ref{EP'}), notice (\ref{EP'}) becomes 
		$$(\nabla P(g(x)))-g(x)\left(\nabla V_g(x)\right)=0$$ 
		We also notice
		$$\left(\nabla P\left(g_{A, B}(x)\right)\right)-g_{A, B}(x)\left(\nabla V_{g_{A, B}}(x)\right)=A^\gamma B\left(\nabla_{B x} P(g(B x))\right)-A^2 B^{-1} g(B x)\left(\nabla_{B x} V_g(B x)\right)$$
		thus the arguments are essentially the same since we have ${A}^{\gamma-2}B^{2}=1$.
	}
\end{proof}

Therefore, to solve (\ref{EL}), we can also assume a solution has mass 1. Notice the radial solution with certain mass is actually a minimizer and unique (see Proposition \ref{uniqueness in non-rotating cases} and Proposition \ref{equivalence between solutions and minimizers}) and vice versa. It can help us to discuss more properties and relations between minimizers with different mass in the next subsection.

\subsection{Scaling relations between minimizers of $E_0(\rho)$ with different mass}\label{subsection3.2-Scaling relations between minimizers of energy with different mass}

In this subsection we will show that given a minimizer, the corresponding scaling density can also be a minimizer, and then we will discuss more quantum properties.

Recall (\ref{energy}) that $E_0(\rho)=U(\rho)-\frac{G(\rho, \rho)}{2}=\int_{\mathbb{R}^3} A(\rho)\,dx-\frac{1}{2} \iint_{{\mathbb{R}^3}\times {\mathbb{R}^3}} \frac{\rho(x) \rho(y)}{|x-y|} \,dxdy$, where $A(\rho)=\frac{K}{\gamma-1} \rho^\gamma$.

Let $\sigma$ be in the admissible class \eqref{admissible class}, then we have
$$\begin{aligned}U\left(\sigma_{\widetilde{A}, B}\right)&=\int_{\mathbb{R}^3} A\left(\sigma_{\widetilde{A}, B}\right)\,dx\\
	&=\int_{\mathbb{R}^3} \frac{K}{\gamma-1}(\widetilde{A} \sigma(B x))^\gamma\,dx\\
	&=\frac{\widetilde{A}^\gamma}{B^3} \int_{\mathbb{R}^3} \frac{K}{\gamma-1}(\sigma(B x))^\gamma \,d(B x)\\
	&=\widetilde{A}^\gamma B^{-3} U(\sigma)\end{aligned}$$
$$
\begin{aligned}
	G\left(\sigma_{\widetilde{A}, B}, \sigma_{\widetilde{A}, B}\right)&=\iint_{{\mathbb{R}^3}\times {\mathbb{R}^3}} \frac{\sigma_{\widetilde{A}, B}(x) \sigma_{\widetilde{A}, B}(y)}{|x-y|} \,dxdy\\
	&=\widetilde{A}^2 B^{-5} \iint_{\mathbb{R}^3\times \mathbb{R}^3} \frac{\sigma(B x) \sigma(B y)}{|B x-B y|} \,d(B x) \,d(B y) \\
	&=\widetilde{A}^2 B^{-5} G(\sigma, \sigma) 
\end{aligned}
$$
$$\begin{aligned}
	\int_{\mathbb{R}^3} \sigma_{\widetilde{A}, B}\,dx&=\widetilde{A} B^{-3} \int_{\mathbb{R}^3} \sigma(B x) \,d(B x)\\
	&=\widetilde{A} B^{-3} \int_{\mathbb{R}^3} \sigma\,dx\end{aligned}$$

Assume $\sigma_m$ is a minimizer of $E_0(\rho)$ on $m {R}\left(\mathbb{R}^3\right)$, if $A^\gamma B^{-3}=A^2 B^{-5}$ (i.e., $B=A^{\frac{2-\gamma}{2}}$), we will have $\left(\sigma_m\right)_{A, B}$ is a minimizer of $A^\gamma B^{-3} E_0(\rho)$ on $A B^{-3} m {R}\left(\mathbb{R}^3\right)$ (one can show it by contradiction arguments), and thereby a minimizer of $E_0(\rho)$ on $A B^{-3} m {R}\left(\mathbb{R}^3\right)$. If we hope $\left(\sigma_m\right)_{A, B}$ has mass $1$, then $A B^{-3} m=1$. Therefore, we can solve $A=m^{-\frac{2}{3 \gamma-4}}, B=m^{\frac{\gamma-2}{3 \gamma-4}}$, which are compatible with the results in subsection \ref{subsection3.1-Scaling relations between solutions to partial differential equations} as one can expect.

\begin{theorem}[Relations between Minimizers and Minimal Energies with Different Mass] \label{scaling relation for energy}
	$E_0(\rho)$ allows a unique minimizer on $m {R}\left(\mathbb{R}^3\right)$ up to translation. Let $\sigma$ be the minimizer with mass 1 and the corresponding minimal energy is $e_0=E_0(\sigma)$, then any other minimizer with mass $m$ can be represented as $\sigma_m(x)=\frac{1}{A} \sigma\left(\frac{1}{B} x\right)$, where $A=m^{-\frac{2}{3 \gamma-4}}, B=m^{\frac{\gamma-2}{3 \gamma-4}}$, and the corresponding minimal energy is $e_0(m)=E_0\left(\sigma_m\right)=m^{\frac{5 \gamma-6}{3 \gamma-4}} e_0$.
\end{theorem}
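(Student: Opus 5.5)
Uniqueness up to translation comes from Proposition \ref{uniqueness in non-rotating cases}. Its hypotheses hold for the polytropic law $P(s)=Ks^\gamma$ with $\gamma>\frac{4}{3}$. Indeed, $A(s)=\frac{K}{\gamma-1}s^\gamma$ is $C^3$ on $(0,\infty)$. Also $A'(s^3)=\frac{K\gamma}{\gamma-1}s^{3(\gamma-1)}$ is convex, because $3(\gamma-1)>1$. So for each $m>0$ the minimizer $\sigma_m$ exists by Theorem \ref{non-rotating} (i) and is unique up to translation. For $m=0$ the minimizer is the zero function, and every formula below holds trivially. From now on I take $m>0$.

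The core of the proof is the scaling identities computed just before the statement. For $\rho\ge0$ and $\rho_{\widetilde A,B}(x):=\widetilde A\rho(Bx)$ they read
\[
U(\rho_{\widetilde A,B})=\widetilde A^\gamma B^{-3}U(\rho),\qquad G(\rho_{\widetilde A,B},\rho_{\widetilde A,B})=\widetilde A^2B^{-5}G(\rho,\rho),\qquad \int\rho_{\widetilde A,B}\,dx=\widetilde AB^{-3}\int\rho\,dx .
\]
I then impose $\widetilde A^\gamma B^{-3}=\widetilde A^2B^{-5}$, i.e.\ $B=\widetilde A^{\frac{2-\gamma}{2}}$, so that $E_0(\rho_{\widetilde A,B})=\widetilde A^\gamma B^{-3}E_0(\rho)$. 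The map $T:\rho\mapsto\rho_{\widetilde A,B}$ is a bijection from $\mu R(\mathbb R^3)$ onto $\widetilde AB^{-3}\mu R(\mathbb R^3)$ for every $\mu$. Its inverse is the same kind of map with parameters $(\widetilde A^{-1},B^{-1})$, which again satisfy the constraint. It preserves $L^{4/3}$, nonnegativity and translations up to rescaling. Moreover $E_0\circ T$ is a positive constant multiple of $E_0$. Hence $T$ maps minimizers to minimizers.

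Now take $\widetilde A=1/A$ and $B\mapsto 1/B$, with $A=m^{-\frac{2}{3\gamma-4}}$ and $B=m^{\frac{\gamma-2}{3\gamma-4}}$. I check that $B^{-1}=(A^{-1})^{\frac{2-\gamma}{2}}$, which is equivalent to $B=A^{\frac{2-\gamma}{2}}$ and follows from the exponents. I also check that the mass factor is $A^{-1}B^{3}=m^{\frac{2+3\gamma-6}{3\gamma-4}}=m$. Therefore $\sigma_m(x):=\frac1A\sigma(\frac xB)$ is a minimizer on $mR(\mathbb R^3)$. By the uniqueness above, every mass-$m$ minimizer is a translate of it, which gives the representation claimed in Theorem \ref{scaling relation for energy}.

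For the energy I compute $e_0(m)=E_0(\sigma_m)=A^{-\gamma}B^{3}e_0$. The exponent of $m$ is $\frac{2\gamma+3\gamma-6}{3\gamma-4}=\frac{5\gamma-6}{3\gamma-4}$, so $e_0(m)=m^{\frac{5\gamma-6}{3\gamma-4}}e_0$.

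The only delicate point is making sure the argument is not circular. The proof of Theorem \ref{non-rotating} (ii) quotes this theorem, so I must use only part (i) of Theorem \ref{non-rotating} (existence) and Proposition \ref{uniqueness in non-rotating cases}. I have to check that neither of these relies on the concavity statement. Everything else is exponent bookkeeping.
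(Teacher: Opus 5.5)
Your proposal is correct and follows the paper's own argument. The paper likewise takes uniqueness from Proposition~\ref{uniqueness in non-rotating cases} and uses the scaling identities for $U$, $G$ and the mass under the constraint $B=A^{\frac{2-\gamma}{2}}$, so that $E_0$ rescales by a positive constant and minimizers go to minimizers; it then computes $A^{-\gamma}B^3=m^{\frac{5\gamma-6}{3\gamma-4}}$. Your explicit bijection/inverse-map argument, the verification of the hypotheses for the polytropic law, and the non-circularity check with respect to Theorem~\ref{non-rotating}~(ii) simply spell out what the paper leaves to ``contradiction arguments.''
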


\begin{proof}
	{
		\rm
		The uniqueness result comes from Proposition \ref{uniqueness in non-rotating cases}. The relation between $\sigma_m$ and $\sigma$ can be seen in the arguments above. Notice $A^{-\gamma} B^3=m^{\frac{5 \gamma-6}{3 \gamma-4}}$, thus we have $e_0(m)=m^{\frac{5 \gamma-6}{3 \gamma-4}} e_0$.
	}
\end{proof}

\vspace*{0.8 em}
\begin{remark}\label{asymptotic behaviour of radius and norm in single star case}
	Thanks to Theorem \ref{non-rotating}, we know $\|\sigma\|_{L^{\infty}\left(\mathbb{R}^3\right)} \leq C_1$ and spt $\sigma$ is contained in a ball of radius $R_1$, therefore, $\left\|\sigma_m\right\|_{L^{\infty}\left(\mathbb{R}^3\right)} \leq C_m=\frac{C_1}{A}$ and spt $\sigma_m$ is contained in a ball of radius $R_m=B R_1$. Since $A=m^{-\frac{2}{3 \gamma-4}}, B=m^{\frac{\gamma-2}{3 \gamma-4}}$, if we further assume $\gamma >2$, we know $\lim\limits_{m \rightarrow 0} A=+\infty$ and $\lim\limits_{m \rightarrow 0} B=0$, thus $\left\|\sigma_m\right\|_{L^{\infty}\left(\mathbb{R}^3\right)}$ and the size of $\sigma_m$'s support will go to 0 when $m \rightarrow 0$, with rates $m^{\frac{2}{3 \gamma-4}}$ and $m^{\frac{\gamma-2}{3 \gamma-4}}$ respectively. Notice $\left\|\sigma_m\right\|_{L^{\infty}\left(\mathbb{R}^3\right)}$ is actually the central density of $\sigma_m$, thus it gives the decay rate and strengthens Corollary \ref{relation between central density and mass strenthed}. However, when $\gamma<2$, $\lim\limits_{m \rightarrow 0} B=\infty$, the size of $\sigma_m$'s support can go to $\infty$ when $m \rightarrow 0$ (flatten out or dispread). It coincides with a result mentioned in Lieb and Yau's paper \cite[Theorem 5]{LY87}, which says that the radius $R_m \rightarrow \infty$ as $m \rightarrow 0$. In their paper quantum mechanics (fermions case) is discussed.
\end{remark}

Since we have the scaling relationship between energies, intuitively, one might guess whether a scaling relationship exists between the derivatives. Indeed, there is a relationship.

\begin{proposition}[Relations of Variational Derivatives Between Densities with Different Mass]\label{scaling relation for derivatives}
	Given $\rho_m \in mR(\mathbb{R}^3)$ with $U(\rho_m)<\infty$, then $E_0\left(\rho_m\right)$ is $P_{\infty}\left(\rho_m\right)$ differentiable at $\rho_m$ and the derivative at $\rho_m$ satisfies $$E_0^{\prime}\left(\rho_m\right)(x)= A^{-\gamma-1}E_0^{\prime}(\rho)\left(\frac{x}{B}\right)=m^{\frac{2 \gamma-2}{3 \gamma-4}}E_0^{\prime}(\rho)\left(\frac{1}{B} x\right)$$
	where $\rho(x)\coloneq\left(\rho_m\right)_{A, B}(x)=A \rho_m(B x) \in R(\mathbb{R}^3)$ with $U(\rho)<\infty$, $A=m^{-\frac{2}{3 \gamma-4}}$, $B=m^{\frac{\gamma-2}{3 \gamma-4}}$, $\rho$ has mass 1.
\end{proposition}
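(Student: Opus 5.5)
The plan is to reduce everything to Lemma \ref{diff. of energy} and then carry out a direct change of variables in the explicit formula \eqref{variational derivative}. This mirrors the computations already done before Theorem \ref{scaling relation for PDEs} and Theorem \ref{scaling relation for energy}.

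First I would check the hypotheses. Since $\rho(x)=A\rho_m(Bx)$, the scaling computations preceding Theorem \ref{scaling relation for energy} give $\int\rho\,dx=AB^{-3}m=1$. With the stated $A,B$ they also give $U(\rho)=A^\gamma B^{-3}U(\rho_m)<\infty$, and $\rho\in L^{4/3}$ because $\|\rho\|_{4/3}^{4/3}=A^{4/3}B^{-3}\|\rho_m\|_{4/3}^{4/3}$. Hence $\rho\in R(\mathbb{R}^3)$ with $U(\rho)<\infty$. Lemma \ref{diff. of energy} then applies to both $\rho_m$ and $\rho$: each energy is $P_\infty$-differentiable, with derivative $E_0'(\cdot)=A'(\cdot)-V_{\cdot}$.

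I would also record that the map $\sigma\mapsto\sigma(\cdot/B)$ sends $P_\infty(\rho)$ bijectively onto $P_\infty(\rho_m)$, after adjusting the parameter $R$ in $P_R$ by factors of $A$ and $B$. This shows that the functional identity $E_0'(\rho_m)(\sigma)=\int E_0'(\rho_m)\sigma\,dx$ is compatible with the pointwise scaling formula.

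The core step is the pointwise computation. Write $\rho_m(y)=A^{-1}\rho(y/B)$. For the polytropic law, $A'(s)=\frac{K\gamma}{\gamma-1}s^{\gamma-1}$, so
\[
A'(\rho_m(y))=A^{1-\gamma}A'(\rho)(y/B).
\]
The substitution $z=y'/B$ in the Newtonian potential gives
\[
V_{\rho_m}(y)=A^{-1}B^{2}V_\rho(y/B).
\]
The choice of $A,B$ is exactly the compatibility condition $A^{\gamma-2}B^2=1$, equivalently $A^{-1}B^2=A^{1-\gamma}$. Therefore the two terms scale by the same factor, and
\[
E_0'(\rho_m)(x)=A^{1-\gamma}E_0'(\rho)(x/B).
\]
Finally, $A^{1-\gamma}=m^{\frac{2(\gamma-1)}{3\gamma-4}}$, which is the claimed power of $m$.

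The only real obstacle is bookkeeping with the exponent. The prefactor produced by the computation is $A^{1-\gamma}$, and this is the one consistent with $m^{\frac{2\gamma-2}{3\gamma-4}}$ and with the relation $\lambda_m=A^{1-\gamma}\lambda_1$ used in Proposition \ref{exact value of multiplier}. The exponent "$-\gamma-1$" in the statement therefore appears to be a typo for $1-\gamma$, and I would prove the identity in the form $A^{1-\gamma}=m^{\frac{2\gamma-2}{3\gamma-4}}$.

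A secondary point is that the identity is pointwise only almost everywhere, since $\rho_m$ is merely an $L^{4/3}$ function. This is harmless because $E_0'$ enters only through integration against elements of $P_\infty$.
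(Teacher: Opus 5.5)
Your proof is correct, and so is your diagnosis of the exponent. Since $A^{-\gamma-1}=m^{\frac{2\gamma+2}{3\gamma-4}}\neq m^{\frac{2\gamma-2}{3\gamma-4}}$, the statement as written is internally inconsistent. The right prefactor is $A^{1-\gamma}$, which is what the paper itself uses in the remark following the proposition and in $\lambda_m=A^{1-\gamma}\lambda_1$ in Proposition~\ref{exact value of multiplier}.

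Your route is not the one in the paper's proof; it is the alternative the paper only sketches in that remark. The paper's proof never uses the explicit formula for $E_0'$. Instead it:
\begin{enumerate}
\item transports each direction $\tau_m\in P_\infty(\rho_m)$ to $\tau=(\tau_m)_{A,B}\in P_\infty(\rho)$;
\item applies the homogeneity $E_0(\sigma_{1/A,1/B})=A^{-\gamma}B^{3}E_0(\sigma)$ of the whole functional, valid because $B^2=A^{2-\gamma}$, to the difference quotients;
\item changes variables in $\int E_0'(\rho)\tau\,dx$;
\item identifies the two functions because $P_\infty(\rho_m)$ contains enough test functions.
\end{enumerate}
The typo originates in that change of variables, since $A^{-\gamma}B^{3}\cdot AB^{-3}=A^{1-\gamma}$. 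The limits there should also read $t\to0^{+}$, not $t\to\infty$.

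That argument shows the scaling of $E_0'$ is inherited from the scaling of $E_0$ itself, and it needs only that the derivative is represented by some function. The cost is that it genuinely relies on the correspondence between the two perturbation cones and on a density argument to pass from the tested identity to a pointwise one.

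Your argument takes $E_0'=A'(\cdot)-V_{(\cdot)}$ from Lemma~\ref{diff. of energy} and makes two substitutions. This produces the identity directly at the level of functions, and the exponent bookkeeping is transparent. Consequently the cone correspondence you verify is only a consistency check, not a needed step. It is correct, for example with $R'=\max\{R/A,BR,AR\}$.
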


\begin{proof}
	{
		\rm
		The result $E_0\left(\rho_m\right)$ is $P_{\infty}\left(\rho_m\right)$-differentiable at $\rho_m$ comes from Lemma \ref{diff. of energy}. Also easy to check $U(\rho)<\infty$ and has mass 1, and $E_0(\rho)$ is $P_{\infty}(\rho)$-differentiable at $\rho$. Given $\tau_m \in P_{\infty}\left(\rho_m\right)$, one can check $\tau:=\left(\tau_m\right)_{A, B}$ is in $P_{\infty}(\rho)$. By the definition of variational derivatives, one has
		\begin{align*}
			\int_{\mathbb{R}^3} E_0^{\prime}\left(\rho_m\right) \tau_m\,dx & =\lim _{t \rightarrow \infty} \frac{E_0\left(\rho_m+t \tau_m\right)-E_0\left(\rho_m\right)}{t}\\
			&=\lim _{t \rightarrow \infty} \frac{E_0\left((\rho+t \tau)_{\frac{1}{A},\frac{1}{B}}\right)-E_0\left(\rho_{\frac{1}{A},\frac{1}{B}}\right)}{t} \\
			& =\lim _{t \rightarrow \infty} A^{-\gamma} B^3 \frac{E_0(\rho+t \tau)-E_0(\rho)}{t}\\
			&=A^{-\gamma} B^3 \int_{\mathbb{R}^3} E_0^{\prime}(\rho) \tau\,dx \\
			& =A^{-\gamma-1} \int_{\mathbb{R}^3} E_0^{\prime}(\rho)(x) \tau_m(B x) \,d(B x)\\
			&=A^{-\gamma-1} \int_{\mathbb{R}^3} E_0^{\prime}(\rho)\left(\frac{x}{B}\right) \tau_m(x) \,dx
		\end{align*}

		Since there are enough functions $\tau_m$ in $P_{\infty}\left(\rho_m\right)$, thus we have $$E_0^{\prime}\left(\rho_m\right)(x)= A^{-\gamma-1}E_0^{\prime}(\rho)\left(\frac{x}{B}\right)=m^{\frac{2 \gamma-2}{3 \gamma-4}} E_0^{\prime}(\rho)\left(\frac{1}{B} x\right)$$
	}
\end{proof}

\begin{remark}
	Suppose $U(\rho_m)<\infty $, there is another way to show the result. Let $\rho(x)=A \rho_m(B x)$ with $A=m^{-\frac{2}{3 \gamma-4}}$ and $B=m^{\frac{\gamma-2}{3 \gamma-4}}$, we already know $E_0^{\prime}(\rho)=A^{\prime}(\rho(x))- V_\rho(x)$, and then one can also compute explicitly that $$A^{\prime}\left(\rho_m\right)(x)=A^{\prime}\left(\frac{1}{A} \rho\right)\left(\frac{1}{B} x\right)=\frac{1}{A^{\gamma-1}} A^{\prime}(\rho)\left(\frac{1}{B} x\right)$$ $$V_{\rho_m}(x)=\frac{B^2}{A} V_\rho\left(\frac{1}{B} x\right)=\frac{1}{A^{\gamma-1}} V_\rho\left(\frac{1}{B} x\right)$$
	and obtain again 
	$$E_0^{\prime}\left(\rho_m\right)(x)=\frac{1}{A^{\gamma-1}} E_0^{\prime}(\rho)\left(\frac{1}{B} x\right)=m^{\frac{2 \gamma-2}{3 \gamma-4}} E_0^{\prime}(\rho)\left(\frac{1}{B} x\right)$$
\end{remark}

	\section*{Appendix}\label{Appendix}
	\appendix

	\section{Properties of Sobolev Spaces}\label{sectionA-properties of sobolev spaces}
	
	We recall some properties of Sobolev spaces as well as of $L^p$ spaces. Some of them will be used frequently in this paper.

	
	\begin{proposition}[Hardy-Littlewood-Sobolev Inequality {\cite[Theorem 1.7]{BCD11}}] \label{HLSI}
		Let $1<p, r<\infty$ and $0<\alpha<n$ be such that $\frac{1}{p}+\frac{\alpha}{n}=\frac{1}{r}+1$. $\exists C_{p, \alpha, n}>0$, such that
		\begin{equation} \label{BoV}
			\left\||\cdot|^{-\alpha} * f\right\|_{L^{r}\left(\mathbb{R}^{n}\right)} \leq C_{p, \alpha, n}\|f\|_{L^{p}\left(\mathbb{R}^{n}\right)}
		\end{equation}
	\end{proposition}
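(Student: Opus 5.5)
The statement is the Hardy--Littlewood--Sobolev inequality: $|\cdot|^{-\alpha}*f$ is bounded from $L^p(\mathbb{R}^n)$ to $L^r(\mathbb{R}^n)$ when $\frac1p+\frac{\alpha}{n}=\frac1r+1$. I would not try a sharp-constant or rearrangement argument. Instead I would prove that $T f:=|\cdot|^{-\alpha}*f$ is of weak type $(p,r)$ for every admissible pair, and then obtain the strong bound from the Marcinkiewicz interpolation theorem. The exponent relation is exactly what dilation invariance forces: replacing $f$ by $f(\delta\cdot)$ multiplies both sides by the same power of $\delta$. That is a useful check but not something the proof needs.

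\textbf{Main step (Hedberg-type splitting).} Fix $f\in L^p$ with $\|f\|_{L^p}=1$, which is allowed by homogeneity, and fix $R>0$. Split the kernel as $K_1=|x|^{-\alpha}\mathbf 1_{\{|x|<R\}}$ and $K_2=|x|^{-\alpha}\mathbf 1_{\{|x|\ge R\}}$.
\begin{itemize}
\item For the far part, Hölder's inequality gives $\|K_2*f\|_{L^\infty}\le \|K_2\|_{L^{p'}}\|f\|_{L^p}=C R^{-n/r}$. Here $K_2\in L^{p'}$ because $\alpha p'>n$, which is equivalent to $r<\infty$, and the exponent works out as $n/p'-\alpha=-n/r$.
\item For the near part, $K_1\in L^1$ since $\alpha<n$, with $\|K_1\|_{L^1}=CR^{n-\alpha}$. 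Hence $\|K_1*f\|_{L^p}\le CR^{n-\alpha}$.
\end{itemize}

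\textbf{Weak-type bound.} Given $t>0$, choose $R$ so that $CR^{-n/r}=t/2$. Then $\{|Tf|>t\}\subset\{|K_1*f|>t/2\}$, and Chebyshev's inequality gives
\[
\mu(\{|Tf|>t\})\le C t^{-p}R^{(n-\alpha)p}=Ct^{-p}\,t^{-r(n-\alpha)p/n}.
\]
The total exponent equals $-r$ by the relation $\frac1p+\frac{\alpha}{n}=\frac1r+1$. This is the main bookkeeping step, and the only place the exponent relation enters essentially.

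\textbf{Interpolation.} Now $p\mapsto r(p)$ is a continuous increasing map on the admissible range $1<p<n/(n-\alpha)$, and $T$ is linear (take it sublinear on positive $f$ if preferred). Pick $p_0<p<p_1$ in that range with weak-type bounds at both endpoints. Since $p_i\le r_i$, Marcinkiewicz interpolation yields the strong $(p,r)$ bound, with a constant depending only on $p,\alpha,n$.

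\textbf{Expected obstacle.} The only delicate point is checking the hypotheses of off-diagonal Marcinkiewicz, namely that $p_0\le r_0$ and $p_1\le r_1$ with $p_0\neq p_1$. This holds because $r>p$ whenever $\alpha<n$.

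A fallback, if one prefers to avoid interpolation, is Hedberg's pointwise bound $|K_1*f|\le CR^{n-\alpha}Mf$, where $M$ is the Hardy--Littlewood maximal function. Optimizing over $R$ pointwise gives $|Tf(x)|\le C\,(Mf(x))^{p/r}\|f\|_{L^p}^{1-p/r}$, and the strong bound then follows directly from the $L^p$-boundedness of $M$ for $p>1$.
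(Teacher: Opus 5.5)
The paper gives no proof of this proposition. It quotes it from \cite[Theorem 1.7]{BCD11} and uses it only as a tool in the Newtonian case $n=3$, $\alpha=1$, for instance behind Proposition~\ref{bound of potential energy}.

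Your argument is a self-contained proof by the standard real-variable route, and it is correct:
\begin{itemize}
\item The far-kernel bound uses $\alpha p'>n$, which is equivalent to $r<\infty$, and produces exactly $R^{-n/r}$.
\item The near-kernel bound uses $\alpha<n$.
\item The Chebyshev exponent $-p-rp(n-\alpha)/n$ equals $-r$ precisely because $r/p=1+r-r\alpha/n$.
\end{itemize}

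Two points should be made explicit in the interpolation step.
\begin{itemize}
\item You should say why $(p,r)$ is actually the interpolated pair. Since $\frac1r-\frac1p=\frac{\alpha}{n}-1$ is constant, $\frac1p=\frac{1-\theta}{p_0}+\frac{\theta}{p_1}$ forces $\frac1r=\frac{1-\theta}{r_0}+\frac{\theta}{r_1}$. The same relation gives $r_0\neq r_1$, which the off-diagonal Marcinkiewicz theorem also requires, alongside $p_0\neq p_1$ and $p_i\le r_i$.
\item $T$ must be defined on $L^{p_0}+L^{p_1}$. This follows from your own splitting, which shows $|\cdot|^{-\alpha}*|f|<\infty$ a.e.\ for $f$ in each admissible $L^{p_i}$.
\end{itemize}

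Your Hedberg fallback is arguably cleaner, since it avoids interpolation altogether. Its cost is the maximal theorem, the standard majorization $|K_1*f|\le \|K_1\|_{L^1}Mf$ for a radially decreasing integrable kernel, and separate treatment of the trivial cases $Mf(x)\in\{0,\infty\}$.

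Compared with the bare citation, either version shows where each hypothesis enters: $p>1$, $r<\infty$, and $0<\alpha<n$. It also shows that the same near/far split, with the far piece bounded by $R^{-\alpha}\|f\|_{L^1}$ instead of Hölder in $L^{p'}$, is what yields the $r=\infty$ substitute in Proposition~\ref{bound of potential}.
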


	By Hardy-Littlewood-Sobolev Inequality Proposition \ref{HLSI}, we can show the potential energy is bounded by density's $L^{1}$ norm (density's mass) and density's $L^{\frac{4}{3}}$ norm.
	
	\begin{proposition}[Bound of Potential Energy {\cite[Proposition 6]{AB71}}]\label{bound of potential energy}
		$\exists C>0$, if $\rho \in L^{1}\left(\mathbb{R}^{3}\right) \cap L^{\frac{4}{3}}\left(\mathbb{R}^{3}\right)$, let $V_{\rho}(x)=$ $\int_{\mathbb{R}^3} \frac{\rho(y)}{|x-y|} \,dy$, then 
		$$\left|\int_{\mathbb{R}^3} \rho \cdot V_{\rho}\,dx\right| \leq C \int_{\mathbb{R}^3}|\rho|^{\frac{4}{3}}\,dx \cdot\left(\int_{\mathbb{R}^3}|\rho|\,dx\right)^{\frac{2}{3}}$$
	\end{proposition}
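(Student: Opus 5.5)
The inequality to prove is
$$\Big|\int_{\mathbb{R}^3}\rho V_\rho\,dx\Big|\le C\|\rho\|_{L^{4/3}}^{4/3}\|\rho\|_{L^1}^{2/3}.$$
The plan is to combine the Hardy--Littlewood--Sobolev inequality (Proposition \ref{HLSI}) with Hölder's inequality and then interpolation between $L^1$ and $L^{4/3}$. Since $|\int\rho V_\rho|\le\int|\rho|V_{|\rho|}$, it suffices to treat $\rho\ge0$; write $\rho$ for $|\rho|$ below.

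\textbf{Step 1 (HLS bound for the potential).} Apply Proposition \ref{HLSI} with $n=3$ and $\alpha=1$, so that $\frac1p+\frac13=\frac1r+1$. I would take $p=\frac65$, which gives $\frac1r=\frac56+\frac13-1=\frac16$, i.e. $r=6$. This yields $\|V_\rho\|_{L^6}\le C\|\rho\|_{L^{6/5}}$.

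\textbf{Step 2 (Hölder).} By Hölder with exponents $6$ and $6/5$,
$$\int\rho V_\rho\le\|\rho\|_{L^{6/5}}\|V_\rho\|_{L^6}\le C\|\rho\|_{L^{6/5}}^2.$$

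\textbf{Step 3 (interpolation).} Since $1<\frac65<\frac43$, write $\frac56=\frac{\theta}{1}+\frac{1-\theta}{4/3}$. This gives $\frac56=\theta+\frac34(1-\theta)$, so $\theta=\frac13$. Hence
$$\|\rho\|_{L^{6/5}}\le\|\rho\|_{L^1}^{1/3}\|\rho\|_{L^{4/3}}^{2/3},$$
and squaring gives $\|\rho\|_{L^{6/5}}^2\le\|\rho\|_{L^1}^{2/3}\|\rho\|_{L^{4/3}}^{4/3}=(\int\rho)^{2/3}\int\rho^{4/3}$, which is exactly the claimed bound.

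As a sanity check on the exponents, both sides scale the same way under $\rho\mapsto\rho(\cdot/\delta)$, namely like $\delta^5$. The only point needing care is that $\rho\in L^1\cap L^{4/3}$ indeed lies in $L^{6/5}$, and this is guaranteed by the interpolation step itself. There is no real obstacle; the main choice is picking $p=6/5$ so that the Hölder pairing closes.
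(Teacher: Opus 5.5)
Your proof is correct and follows the route the paper indicates. The paper gives no detailed argument for this proposition; it only points to the Hardy--Littlewood--Sobolev inequality and \cite[Proposition 6]{AB71}, and your three steps carry that out: HLS with $p=\frac65$, $r=6$, then H\"older pairing $L^{6/5}$ with $L^6$, then the interpolation $\|\rho\|_{L^{6/5}}\le\|\rho\|_{L^1}^{1/3}\|\rho\|_{L^{4/3}}^{2/3}$, which gives exactly the stated bound.
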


	Notice that Hardy-Littlewood-Sobolev Inequality Proposition \ref{HLSI} can fail when $r=\infty$. To estimate the bound in $L^{\infty}$, we introduce the following proposition:
	
	\begin{proposition}[Bound of Potential {\cite[Proposition 5]{AB71}} {\cite[Proposition B.1]{Che26G1}}] \label{bound of potential}
		Suppose $\rho \in L^{1}\left(\mathbb{R}^{3}\right) \cap L^{p}\left(\mathbb{R}^{3}\right)$. If $1<p \leq \frac{3}{2}$, then $\forall r \in\left(3, \frac{3 p}{3-2 p}\right)$, we have $V_{\rho} \in L^{r}\left(\mathbb{R}^{3}\right)$, and $\exists 0<b_{r}<1,0<c_{r}<1, C>0$, such that
		\begin{equation} \label{BoVs}
			\left\|V_{\rho}\right\|_{L^{r}} \leq C\left(\|\rho\|_{L^{1}}^{b_{r}}\|\rho\|_{L^{p}}^{1-b_{r}}+\|\rho\|_{L^{1}}^{c_{r}}\|\rho\|_{L^{p}}^{1-c_{r}}\right) 
		\end{equation}
		
		If $p>\frac{3}{2}$, then $V_{\rho}$ is bounded and continuous and satisfies (\ref{BoVs}) with $r=\infty$.
	\end{proposition}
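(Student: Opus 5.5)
Our plan is to derive the bound from Proposition \ref{HLSI} applied with $n=3$ and $\alpha=1$, splitting the kernel $|x|^{-1}$ into a near part and a far part, and then optimizing over the splitting radius.

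\textbf{Case $1<p\le \frac32$.} We fix $R>0$ and write $|x|^{-1}=k_1+k_2$, where $k_1=|x|^{-1}\mathbf 1_{\{|x|<R\}}$ and $k_2=|x|^{-1}\mathbf 1_{\{|x|\ge R\}}$. Then $V_\rho=k_1*\rho+k_2*\rho$, and each piece is estimated by Young's inequality.
\begin{itemize}
\item For the near part we use $\|k_1*\rho\|_{L^r}\le \|k_1\|_{L^q}\|\rho\|_{L^p}$ with $1+\frac1r=\frac1q+\frac1p$. The norm $\|k_1\|_{L^q}$ is finite exactly when $q<3$, and it scales like $R^{3/q-1}$. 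The condition $q<3$ is equivalent to $\frac1r>\frac1p-\frac23$, i.e.\ $r<\frac{3p}{3-2p}$, which is the stated upper endpoint.
\item For the far part we use $\|k_2*\rho\|_{L^r}\le \|k_2\|_{L^r}\|\rho\|_{L^1}$. Here $\|k_2\|_{L^r}$ is finite exactly when $r>3$, and it scales like $R^{3/r-1}$. This is where the lower endpoint $r>3$ comes from.
\end{itemize}
Consequently, for every $R>0$,
\[
\|V_\rho\|_{L^r}\le C\big(R^{a}\|\rho\|_{L^p}+R^{-c}\|\rho\|_{L^1}\big),\qquad a=\tfrac3q-1>0,\quad c=1-\tfrac3r>0 .
\]

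\textbf{Choice of $R$.} We balance the two terms by taking $R=(\|\rho\|_{L^1}/\|\rho\|_{L^p})^{1/(a+c)}$. Both terms then become $\|\rho\|_{L^1}^{b}\|\rho\|_{L^p}^{1-b}$ with $b=\frac{a}{a+c}\in(0,1)$. This already gives (\ref{BoVs}) with a single term, and we may take $c_r=b_r$. If a genuinely two-term form is wanted, for instance to avoid dividing by $\|\rho\|_{L^p}$ when it vanishes, we can instead choose $R=1$ when one of the norms is zero. The two-exponent statement in (\ref{BoVs}) is weaker than the one-term bound, so it follows either way.

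\textbf{Case $p>\frac32$.} We use the same split with $r=\infty$. By Hölder's inequality, $|k_1*\rho|\le \|k_1\|_{L^{p'}}\|\rho\|_{L^p}$, and $\|k_1\|_{L^{p'}}$ is finite precisely because $p'<3$, which holds since $p>\frac32$. We also have $|k_2*\rho|\le R^{-1}\|\rho\|_{L^1}$. Optimizing over $R$ exactly as before yields (\ref{BoVs}) with $r=\infty$.

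\textbf{Continuity.} To show $V_\rho$ is continuous, we note that $k_1\in L^{p'}$ and $k_2\in L^\infty$ is uniformly continuous. The convolution of an $L^{p'}$ function with an $L^p$ function is uniformly continuous when $p'<\infty$, which follows from continuity of translations in $L^{p'}$. The convolution $k_2*\rho$ with $\rho\in L^1$ is continuous by the uniform continuity of $k_2$.

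\textbf{Expected obstacle.} The main difficulty is bookkeeping rather than analysis. We must check that the exponent ranges for $q$ in Young's inequality are admissible, namely $q\ge 1$, which holds because $r\ge p$. We must also confirm that both endpoints $r=3$ and $r=\frac{3p}{3-2p}$ are genuinely excluded by the integrability of the truncated kernels.
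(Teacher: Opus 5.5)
Your proof is correct and follows essentially the same route as the cited argument (\cite[Proposition 5]{AB71}, \cite[Proposition B.1]{Che26G1}). The paper does not reprove this proposition, but its remark in the proof of the compactness proposition for $q\ge 1$ points to the same mechanism: split $|x|^{-1}$ into a near part lying in $L^q$ for $q<3$ and a far part lying in $L^q$ for $q>3$, then apply Young's inequality to each piece, or H\"older when $r=\infty$.

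The one difference is that you optimize the splitting radius. This gives a single scale-invariant term with $b_r=c_r=\frac{2+3/r-3/p}{3-3/p}$. The two-exponent form \eqref{BoVs} is what a fixed-radius split produces after bounding each piece by an intermediate $L^s$ norm interpolated between $L^1$ and $L^p$. Since the statement only asserts that some $b_r,c_r\in(0,1)$ exist, your sharper one-term bound suffices.

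There is one small slip in the continuity step. The kernel $k_2=|x|^{-1}\mathbf{1}_{\{|x|\ge R\}}$ jumps from $0$ to $1/R$ across $|x|=R$, so it is not uniformly continuous. Put the translation on $\rho$ instead: $|k_2*\rho(x+h)-k_2*\rho(x)|\le R^{-1}\|\rho(\cdot+h)-\rho\|_{L^1}\to 0$ as $h\to 0$. Alternatively, replace the sharp cutoff by a smooth one.
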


	As discussed in \cite[Appendix B]{Che26G1}, it turns out $h(x)=-\int_{\mathbb{R}^3} \frac{y_{j} \rho(x-y)}{|y|^{3}} \,dy$ is the weak derivative of $V_{\rho}(x)$. Moreover, for $p>3$, we have $\frac{\partial V_{\rho}}{\partial x_{j}}(x)=-\int_{\mathbb{R}^3} \frac{y_{j} \rho(x-y)}{|y|^{3}} \,dy$ not only in the distribution sense but also in the classical sense:
	
	\begin{proposition}[Differentiability of Potential {\cite[Section 3]{AB71}} {\cite[Proposition B.3]{Che26G1}}] \label{diff. of poten.}
		If $\rho \in L^{1}\left(\mathbb{R}^{3}\right) \cap L^{p}\left(\mathbb{R}^{3}\right)$ for some $p>3$, then $V_{\rho} \in$ $W^{1, \infty}\left(\mathbb{R}^{3}\right)$ is continuously differentiable, and the weak derivative coincides with the classical one for all $x \in \mathbb{R}^{3}$.
	\end{proposition}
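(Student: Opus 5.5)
The plan is to use the explicit formula for the kernel of $\nabla V_\rho$ and split it into a near part and a far part. Fix $j$ and define
\[
h_j(x):=-\int_{\mathbb{R}^3}\frac{y_j\,\rho(x-y)}{|y|^3}\,dy .
\]
The kernel $K_j(y)=-y_j|y|^{-3}$ is bounded by $|y|^{-2}$. On $B_1(0)$ it lies in $L^{q}$ for every $q<3/2$. On the complement it is bounded by $1$.

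Since $p>3$, the conjugate exponent satisfies $p'<3/2$. Hölder's inequality on the near part, $\int_{|y|<1}|y|^{-2}|\rho(x-y)|\,dy\le \||y|^{-2}\mathbf 1_{B_1}\|_{L^{p'}}\|\rho\|_{L^p}$, gives a uniform bound. The far part is bounded by $\|\rho\|_{L^1}$. Hence $h_j\in L^\infty$. By the same splitting applied to $V_\rho$ itself (the kernel $|y|^{-1}$ is in $L^{p'}(B_1)$ as well), $V_\rho$ is bounded and continuous, consistent with Proposition \ref{bound of potential} for $p>3/2$.

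Next I will show that $h_j$ is continuous. Write $h_j=K_j\mathbf 1_{B_1}*\rho+K_j\mathbf 1_{B_1^c}*\rho$. The first term is a convolution of an $L^{p'}$ function with an $L^p$ function, so it is uniformly continuous by continuity of translation in $L^p$ (equivalently in $L^{p'}$). For the second term, $K_j\mathbf 1_{B_1^c}$ is bounded and uniformly continuous away from the jump at $|y|=1$. To avoid that jump, I will instead use a smooth cutoff $\chi$ with $\chi=1$ near $0$ and $\chi=0$ outside $B_2$. Then $K_j(1-\chi)$ is bounded and uniformly continuous, and its convolution with $\rho\in L^1$ is continuous by dominated convergence. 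The term $K_j\chi$ still lies in $L^{p'}$, so the first argument applies to it.

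The main step is to identify $h_j$ with the classical partial derivative. The weak derivative statement from \cite[Appendix B]{Che26G1} gives $\partial_j V_\rho=h_j$ in $\mathcal D'$. Since $V_\rho$ and $h_j$ are both bounded, $V_\rho\in W^{1,\infty}$. To upgrade to the classical sense, I will use the fact that a continuous function whose distributional gradient is continuous is $C^1$. Concretely, mollify: $V_\epsilon=V_\rho*\eta_\epsilon$ is smooth, and $\partial_jV_\epsilon=h_j*\eta_\epsilon$. As $\epsilon\to0$, $V_\epsilon\to V_\rho$ and $\partial_jV_\epsilon\to h_j$ locally uniformly, because both limits are continuous. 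The fundamental theorem of calculus along segments, $V_\epsilon(x+te_j)-V_\epsilon(x)=\int_0^t h_j*\eta_\epsilon(x+se_j)\,ds$, then passes to the limit. This gives $V_\rho(x+te_j)-V_\rho(x)=\int_0^t h_j(x+se_j)\,ds$ for every $x$. Dividing by $t$ and using continuity of $h_j$ shows that the classical partial derivative exists at every point and equals $h_j(x)$.

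The main obstacle is the continuity of $h_j$, since this is exactly where $p>3$ is needed. Without it the singular kernel $|y|^{-2}$ is not in $L^{p'}$ near the origin, and the near-part estimate fails. The other steps are routine.
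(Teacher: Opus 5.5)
Your proof is correct and follows the route the paper indicates. You take $h_j$ as the weak derivative from \cite[Appendix B]{Che26G1}, use $p>3$ (so $p'<\frac{3}{2}$) to split the kernel $y_j|y|^{-3}$ into a local $L^{p'}$ piece and a bounded far piece so that $h_j$ is bounded and continuous, and then upgrade the weak derivative to a classical $C^1$ one (the smooth cutoff is harmless but unnecessary, since a bounded kernel convolved with $\rho\in L^1$ is already uniformly continuous by continuity of translation in $L^1$).
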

	
To prove the low semicontinuity of the energy, we hope to connect weak convergence of densities and strong convergence of potentials.

\begin{proposition}[Compactness of Convolution (Potential) Operator with $3<q<\infty$]\label{compactness of convolution for q>3}
	Let $G(x)=\frac{1}{|x|}$, then $G \in L_{w}^{3}\left(\mathbb{R}^{N}\right)$. Let $F=G * B$, where $B$ is a bounded set in $L^{q}\left(\mathbb{R}^{N}\right)$ with $3<q<\infty$. Let $\Omega \subset \mathbb{R}^{3}$ be open, bounded and of class $C^{1}$, then $\left.F\right|_{\Omega}$ has compact closure in $C(\bar{\Omega})$. In particular, let $V_{\rho}(x)=\int_{\mathbb{R}^3} \frac{\rho(y)}{|x-y|} \,dy$, then for all $r\in[1,\infty]$, the map $\mathcal{V}: \rho \rightarrow V_{\rho}$ from $L^{q}(\Omega)$ to $L^{r}(\Omega)$ is compact.
\end{proposition}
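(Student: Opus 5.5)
The plan is to prove first that $F|_{\Omega}$ is relatively compact in $C(\bar\Omega)$ by Arzelà–Ascoli, and then obtain compactness of $\mathcal{V}:L^q(\Omega)\to L^r(\Omega)$ from the continuous embedding $C(\bar\Omega)\hookrightarrow L^r(\Omega)$. The embedding holds because $\Omega$ is bounded.

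\textbf{Uniform boundedness.} Split the kernel as
\[
G=G\mathbf{1}_{\{|x|<1\}}+G\mathbf{1}_{\{|x|\ge 1\}}=:G_1+G_2 .
\]
Let $q'$ be the conjugate exponent of $q$. Since $q>3$ we have $q'<3/2$, so $G_1\in L^{q'}(\mathbb{R}^3)$, because $\int_{|x|<1}|x|^{-q'}\,dx<\infty$ exactly when $q'<3$. Also $G_2\in L^{s}$ for every $s>3$, and in particular $G_2\in L^{q'}$ if $q'>3$. That never happens here, so instead use $G_2\in L^\infty$. Then Hölder gives, for $x\in\Omega$,
\[
|G_1*\rho(x)|\le \|G_1\|_{L^{q'}}\|\rho\|_{L^q}.
\]
For $G_2*\rho$, note that $\rho\in L^q(\Omega)$ is extended by zero and $\Omega$ is bounded, so $\|\rho\|_{L^1}\le|\Omega|^{1/q'}\|\rho\|_{L^q}$. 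Hence $|G_2*\rho|\le\|\rho\|_{L^1}$, which is uniformly bounded. If $B$ is a bounded set in $L^q(\mathbb{R}^N)$ without support restriction, I would instead use $G_2\in L^{q'}$ only when $q'>3$, which is false, so for the general statement I will read $B$ as supported in a bounded set. This is the setting in which the paper uses the result, namely $W_R$ with $\operatorname{spt}\subset B_R(0)$.

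\textbf{Equicontinuity.} This is the main step. Write
\[
|F\rho(x)-F\rho(x')|\le \|G(x-\cdot)-G(x'-\cdot)\|_{L^{q'}(K)}\,\|\rho\|_{L^q},
\]
where $K$ is a bounded set containing the supports. The task then reduces to showing that $\tau_h G\to G$ in $L^{q'}_{\rm loc}$ as $h\to0$. This is continuity of translation in $L^{q'}$, applied to $G\mathbf{1}_{K'}\in L^{q'}$ for a slightly larger ball $K'$, and it holds precisely because $q'<3$. The modulus obtained depends only on $|x-x'|$ and $\sup_B\|\rho\|_{L^q}$, which gives uniform equicontinuity on $\bar\Omega$. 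The remark $G\in L^3_w$ explains why the threshold is $q>3$: the Hölder pairing needs the local integrability of $|x|^{-q'}$, which is exactly $q'<3$.

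\textbf{Conclusion.} Arzelà–Ascoli now yields that $F|_\Omega$ is relatively compact in $C(\bar\Omega)$. For the operator statement, take a sequence $\rho_n$ bounded in $L^q(\Omega)$. A subsequence of $V_{\rho_n}$ converges uniformly on $\bar\Omega$, and therefore in $L^r(\Omega)$ for every $r\in[1,\infty]$, since
\[
\|f\|_{L^r(\Omega)}\le|\Omega|^{1/r}\|f\|_\infty .
\]
The $C^1$ regularity of $\partial\Omega$ is not really needed beyond boundedness. I expect only the equicontinuity via $L^{q'}$-continuity of translations to need care, and there only in making the modulus independent of $\rho$.
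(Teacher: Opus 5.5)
Your argument is correct, but it takes a genuinely different route from the paper's. The paper gets a uniform $W^{1,\infty}(\mathbb{R}^3)$ bound on $F$ by differentiating the potential (Proposition~\ref{diff. of poten.}, whose kernel $y_j/|y|^3$ lies in $L^{q'}_{\mathrm{loc}}$ exactly when $q>3$). It then quotes Rellich--Kondrachov on the $C^1$ domain $\Omega$. You never differentiate: you obtain equicontinuity directly from $L^{q'}$-continuity of translations of the truncated kernel $G\mathbf{1}_{K'}$, and then apply Arzel\`a--Ascoli.

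The paper's route gives a uniform Lipschitz modulus and reuses an existing proposition. Yours is self-contained and shows that the $C^1$ assumption on $\partial\Omega$ is superfluous. It also uses only $q'<3$, i.e.\ $q>3/2$, so it proves case (3) of Proposition~\ref{compactness of convolution for q>=1} as well. For the same reason, your remark that $G\in L^3_w$ ``explains why the threshold is $q>3$'' is off: $q'<3$ means $q>3/2$, and $q>3$ is only what the derivative-based proof needs.

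Your caveat on the hypotheses is justified. An $L^q$ bound alone does not even make $V_\rho$ finite (take $\rho=(1+|y|)^{-2}$), and the paper's proof tacitly uses an $L^1$ bound through Proposition~\ref{diff. of poten.}. Your bounded-support reading covers the ``in particular'' statement and the application to $W_R$. For $L^1$-bounded $B$ your method still works: split at $|x-y|=2$, and on the far region use $|G(x-y)-G(x'-y)|\le|x-x'|$ (valid for $|x-x'|\le1$) together with $\|\rho\|_{L^1}$.
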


\begin{proof}
	{
		\rm
		By Proposition \ref{diff. of poten.} we know $F \subset W^{1, \infty}\left(\mathbb{R}^{3}\right)$. Following its proof in \cite{Che26G1} in detail one can see the norm $\|\cdot\|_{W^{1, \infty}}$ is bounded uniformly. The result comes from Rellich-Kondrachov Theorem \cite[Theorem 9.16]{Bre11}.
	}
\end{proof}

Actually, one can have a more general version:
\begin{proposition}[Compactness of Convolution (Potential) Operator with $q \geq 1$] \label{compactness of convolution for q>=1}
	Let $G(x)=\frac{1}{|x|}$, then $G \in L_{w}^{3}\left(\mathbb{R}^{3}\right)$. Let $F=G * B$, where $B$ is bounded both in $L^{1}\left(\mathbb{R}^{3}\right)$ and $L^{q}\left(\mathbb{R}^{3}\right)$ with $q > 1$. Let $\Omega \subset \mathbb{R}^{3}$ be open, bounded and of class $C^{1}$, then
	\begin{enumerate}[(1)]
		\item if $1 < q<\frac{3}{2}$, then $\forall 1 \leq r<p$, where $\frac{1}{p}=\frac{1}{q}-\frac{2}{3},\left.F\right|_{\Omega}$ has compact closure in $L^{r}(\Omega)$;
		\item if $q=\frac{3}{2}$, then $\forall 1 \leq r<\infty,\left.F\right|_{\Omega}$ has compact closure in $L^{r}(\Omega)$;
		\item if $q>\frac{3}{2}$, then $\left.F\right|_{\Omega}$ has compact closure in $C(\bar{\Omega})$.
	\end{enumerate}
\end{proposition}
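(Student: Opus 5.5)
The plan is to split the three regimes by the Sobolev exponent of the Newtonian potential, $\frac{1}{p}=\frac{1}{q}-\frac{2}{3}$. In each case the strategy is the same: produce a uniform bound for $F|_\Omega$ in a space that embeds compactly into the target space, and use the $L^1$ bound on $B$ to control the far field. First I would decompose the kernel. Write $G=G\mathbf{1}_{\{|x|<1\}}+G\mathbf{1}_{\{|x|\geq 1\}}=:G_1+G_2$. Here $G_2\in L^\infty$ and $\nabla G_2\in L^\infty$. Hence for $\rho\in B$ we have $\|G_2*\rho\|_{W^{1,\infty}}\leq C\|\rho\|_{L^1}$, which is uniformly bounded. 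By Arzelà--Ascoli, $\{G_2*\rho\}|_{\Omega}$ is precompact in $C(\bar\Omega)$, hence in every $L^r(\Omega)$. So this part is harmless in all three cases, and it is exactly where the $L^1$ bound on $B$ is used.

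For the singular part $G_1*\rho$, I would work with derivatives. The weak gradient of $G*\rho$ is $-\int \frac{y\rho(x-y)}{|y|^3}\,dy$, as recalled before Proposition \ref{diff. of poten.}. Its kernel is $O(|y|^{-2})$, which is $L^{3/2}_w$. By Hardy--Littlewood--Sobolev (Proposition \ref{HLSI} with $\alpha=2$, $n=3$), $\|\nabla V_\rho\|_{L^{s}}\leq C\|\rho\|_{L^q}$ with $\frac1s=\frac1q-\frac13$ whenever $1<q<3$. Likewise Proposition \ref{HLSI} with $\alpha=1$, or Proposition \ref{bound of potential}, gives $\|V_\rho\|_{L^p}\leq C\|\rho\|_{L^q}$ for $q<\frac32$; interpolating with the $L^1$ bound covers the lower exponents. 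Thus $F|_\Omega$ is bounded in $W^{1,s}(\Omega)$ with $\frac1s=\frac1q-\frac13$. Rellich--Kondrachov \cite[Theorem 9.16]{Bre11} on the bounded $C^1$ domain $\Omega$ then yields compactness in $L^r(\Omega)$ for all $r<s^*$, where $\frac{1}{s^*}=\frac1s-\frac13=\frac1q-\frac23=\frac1p$. This gives case (1) exactly. For case (2), $q=\frac32$ gives $s=3$, the borderline exponent, and Rellich--Kondrachov gives compactness in every $L^r$, $r<\infty$. For case (3), with $\frac32<q<3$ we get $s>3$, and Morrey's embedding $W^{1,s}(\Omega)\subset\subset C(\bar\Omega)$ gives the claim. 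For $q\geq 3$, interpolating $L^1\cap L^q$ into some $L^{\tilde q}$ with $\tilde q\in(\frac32,3)$ reduces to that case; alternatively one can cite Proposition \ref{compactness of convolution for q>3} when $q>3$.

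The main obstacle is being careful that the bounds are genuinely uniform over $B$ and valid on $\Omega$ rather than only on $\mathbb{R}^3$. The $L^p$ bound for $V_\rho$ itself comes from HLS only for $q<\frac32$. For $q\geq\frac32$ one instead uses Proposition \ref{bound of potential}, taking a slightly smaller $q'<\frac32$ obtained by interpolation between $L^1$ and $L^q$, to get $V_\rho\in L^r(\Omega)$ uniformly. This supplies the zeroth-order part of the $W^{1,s}(\Omega)$ norm. The identification of the weak gradient should also hold for $q\leq 3$. I would justify this by density: approximate $\rho$ by $C_c^\infty$ functions and pass to the limit using the HLS bounds above.
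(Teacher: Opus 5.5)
Your proof is essentially the paper's: a uniform bound for $V_\rho$ in $W^{1,s}(\Omega)$, with the gradient handled through the kernel $y_j/|y|^3$ and the zeroth-order term through the $L^1\cap L^q$ bounds, followed by Rellich--Kondrachov \cite[Theorem 9.16]{Bre11}. Using Hardy--Littlewood--Sobolev for the gradient has one advantage over the paper's near/far splitting of $y_j/|y|^3$, which only yields some ``appropriate'' $s$: it gives the sharp $\frac1s=\frac1q-\frac13$, so $\frac1s-\frac13=\frac1p$ accounts exactly for the range in case (1).

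One step needs correcting or deleting: the sharp-cutoff splitting $G_2=G\mathbf{1}_{\{|x|\geq 1\}}$. The distributional gradient of $G_2$ contains a surface measure on the unit sphere, so the estimate $\|G_2*\rho\|_{W^{1,\infty}}\leq C\|\rho\|_{L^1}$ is false. Indeed, an approximate identity $\rho\to\delta_0$ gives $G_2*\rho\to G_2$, which is discontinuous. Use a smooth cutoff instead, or simply drop the splitting. Your HLS estimates are already applied to the full potential $G*\rho$ and give the uniform $W^{1,s}(\Omega)$ bound on their own; the $L^1$ bound then enters only through your interpolation step for $q\geq\frac32$.
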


\begin{proof}
	{
		We first establish the boundedness of $F$ in $W^{1, s}$ for some appropriate $s$. Similar to the proof of \cite[Proposition B.1]{Che26G1} (cf. Proposition \ref{bound of potential} above), and noting that $K(y) = \frac{y_j}{|y|^3}$ is in $L^p_{\text{loc}}(\mathbb{R}^3)$ for $p < 3/2$ and belongs to $L^p(\mathbb{R}^3 \setminus B_R(0))$ for $p > 3/2$, we can show there exists an appropriate $s>0$, such that $\forall \rho \in B$, $V_\rho(x)$ and $\frac{\partial V_{\rho}}{\partial x_{j}}(x)=-\int_{\mathbb{R}^3} \frac{y_{j} \rho(x-y)}{|y|^{3}} \,dy$ are bounded in $L^s$. Since $B$ is bounded in $L^1$ and $L^q$, the bound of $V_\rho$ in $W^{1, s}$ is uniform for all $\rho \in B$. Once the boundedness of $F$ in $W^{1, s}$ is established, we apply the Rellich–Kondrachov theorem \cite[Theorem 9.16]{Bre11} to conclude the proof.

	}
\end{proof}

%

\section*{Acknowledgments}\label{section-acknowledgments}
\addcontentsline{toc}{section}{Acknowledgments}
The author is partially supported by the National Science Foundation grant DMS-2308208. This work was primarily carried out during the author’s Master’s studies at the University of Bonn. The author thanks Juan Velázquez and Dimitri Cobb for their continued advice and support since the author’s time in Bonn, as well as Christof Sparber and Mimi Dai for their comments and support during the author's Ph.D. studies at the University of Illinois Chicago. Thanks also to Lorenzo Pompili, Shao Liu, Xiaopeng Cheng, Bernhard Kepka, Daniel Sánchez Simón del Pino for discussions, and to Théophile Dolmaire and other instructors. The author is grateful to his parents.

\bibliographystyle{abbrv}
\addcontentsline{toc}{section}{References}
\bibliography{references}

@misc{Che26G1,
	title={Gradient Existence and Energy Finiteness of Local Minimizers in the {W}asserstein $L^\infty$ Topology for Binary-Star Systems}, 
	author={Hangsheng Chen},
	year={2026},
	eprint={2602.01678},
	archivePrefix={arXiv},
	primaryClass={math.AP},
	url={https://arxiv.org/abs/2602.01678}, 
}

@misc{Che26E3,
	title={Existence for Stable Rotating Star-Planet Systems}, 
	author={Hangsheng Chen},
	year={2026},
	eprint={2602.02761},
	archivePrefix={arXiv},
	primaryClass={math.AP},
	url={https://arxiv.org/abs/2602.02761}, 
}

@misc{CVD24,
	language = {eng},
	publisher = {Rheinische Friedrich-Wilhelms-Universität Bonn},
	title = {Existence for stable rotating star-planet systems},
	address = {Bonn},
	author = {Chen, Hangsheng and Velázquez, J. J. L. and Cobb, Dimitri and Rheinische Friedrich-Wilhelms-Universität Bonn Begründer eines Werks},
	year = {2024},
}

@Article{Sob38,
	title={ On a theorem of functional analysis },
	author={ S. L. Sobolev },
	journal={ Matematicheskii Sbornik },
	year={ 1938 },	
	volume={ 4 },
	pages={ 471-497 },
}

@Article{CDD10,
	author={Campos, Juan
	and del Pino, Manuel
	and Dolbeault, Jean},
	title={Relative Equilibria in Continuous Stellar Dynamics},
	journal={Communications in Mathematical Physics},
	year={2010},
	month={Dec},
	day={01},
	volume={300},
	number={3},
	pages={765-788},
	issn={1432-0916},
	doi={10.1007/s00220-010-1128-2},
	url={https://doi.org/10.1007/s00220-010-1128-2}
}

@Article{HRS22,
	author={Had{\v{z}}i{\'{c}}, Mahir
	and Rein, Gerhard
	and Straub, Christopher},
	title={On the Existence of Linearly Oscillating Galaxies},
	journal={Archive for Rational Mechanics and Analysis},
	year={2022},
	month={Feb},
	day={01},
	volume={243},
	number={2},
	pages={611-696},
	issn={1432-0673},
	doi={10.1007/s00205-021-01734-4},
	url={https://doi.org/10.1007/s00205-021-01734-4}
}

@article{Wol99,
	title = {On nonlinear stability of polytropic galaxies},
	journal = {Annales de l'Institut Henri Poincaré C, Analyse non linéaire},
	volume = {16},
	number = {1},
	pages = {15-48},
	year = {1999},
	issn = {0294-1449},
	doi = {https://doi.org/10.1016/S0294-1449(99)80007-9},
	url = {https://www.sciencedirect.com/science/article/pii/S0294144999800079},
	author = {G. Wolansky}
}

@Article{LMR12,
	author={Lemou, Mohammed
	and M{\'e}hats, Florian
	and Rapha{\"e}l, Pierre},
	title={Orbital stability of spherical galactic models},
	journal={Inventiones mathematicae},
	year={2012},
	month={Jan},
	day={01},
	volume={187},
	number={1},
	pages={145-194},
	issn={1432-1297},
	doi={10.1007/s00222-011-0332-9},
	url={https://doi.org/10.1007/s00222-011-0332-9}
}

@Article{LMR11,
	author={Lemou, Mohammed
	and M{\'e}hats, Florian
	and Rapha{\"e}l, Pierre},
	title={A New Variational Approach to the Stability of Gravitational Systems},
	journal={Communications in Mathematical Physics},
	year={2011},
	month={Feb},
	day={01},
	volume={302},
	number={1},
	pages={161-224},
	issn={1432-0916},
	doi={10.1007/s00220-010-1182-9},
	url={https://doi.org/10.1007/s00220-010-1182-9}
}

@Article{LMR08,
	author={Lemou, Mohammed
	and M{\'e}hats, Florian
	and Raphael, Pierre},
	title={The Orbital Stability of the Ground States and the Singularity Formation for the Gravitational Vlasov Poisson System},
	journal={Archive for Rational Mechanics and Analysis},
	year={2008},
	month={Sep},
	day={01},
	volume={189},
	number={3},
	pages={425-468},
	issn={1432-0673},
	doi={10.1007/s00205-008-0126-4},
	url={https://doi.org/10.1007/s00205-008-0126-4}
}

@article{LMR05,
	title = {Orbital stability and singularity formation for Vlasov–Poisson systems},
	journal = {Comptes Rendus Mathematique},
	volume = {341},
	number = {4},
	pages = {269-274},
	year = {2005},
	issn = {1631-073X},
	doi = {https://doi.org/10.1016/j.crma.2005.06.018},
	url = {https://www.sciencedirect.com/science/article/pii/S1631073X0500275X},
	author = {Mohammed Lemou and Florian Méhats and Pierre Raphael}
}

@Article{GR07,
	author={Guo, Yan
	and Rein, Gerhard},
	title={A Non-Variational Approach to Nonlinear Stability in Stellar Dynamics Applied to the King Model},
	journal={Communications in Mathematical Physics},
	year={2007},
	month={Apr},
	day={01},
	volume={271},
	number={2},
	pages={489-509},
	issn={1432-0916},
	doi={10.1007/s00220-007-0212-8},
	url={https://doi.org/10.1007/s00220-007-0212-8}
}

@article{GR03,
	author = {Guo, Y. and Rein, G.},
	title = "{Stable models of elliptical galaxies}",
	journal = {Monthly Notices of the Royal Astronomical Society},
	volume = {344},
	number = {4},
	pages = {1296-1306},
	year = {2003},
	month = {10},
	issn = {0035-8711},
	doi = {10.1046/j.1365-8711.2003.06920.x},
	url = {https://doi.org/10.1046/j.1365-8711.2003.06920.x},
	eprint = {https://academic.oup.com/mnras/article-pdf/344/4/1296/2918489/344-4-1296.pdf},
}

@Article{GR99S,
	author={Guo, Yan
	and Rein, Gerhard},
	title={Stable Steady States in Stellar Dynamics},
	journal={Archive for Rational Mechanics and Analysis},
	year={1999},
	month={Aug},
	day={01},
	volume={147},
	number={3},
	pages={225-243},
	issn={1432-0673},
	doi={10.1007/s002050050150},
	url={https://doi.org/10.1007/s002050050150}
}

@article{GR99E,
	ISSN = {00222518, 19435258},
	URL = {http://www.jstor.org/stable/24901014},
	author = {Yan Guo and Gerhard Rein},
	journal = {Indiana University Mathematics Journal},
	number = {4},
	pages = {1237--1255},
	publisher = {Indiana University Mathematics Department},
	title = {Existence and Stability of Camm Type Steady States in Galactic Dynamics},
	urldate = {2024-05-13},
	volume = {48},
	year = {1999}
}

@Article{GL08,
	author={Guo, Yan
	and Li, Zhiwu},
	title={Unstable and Stable Galaxy Models},
	journal={Communications in Mathematical Physics},
	year={2008},
	month={May},
	day={01},
	volume={279},
	number={3},
	pages={789-813},
	issn={1432-0916},
	doi={10.1007/s00220-008-0439-z},
	url={https://doi.org/10.1007/s00220-008-0439-z}
}

@Article{Guo99,
	author={Guo, Yan},
	title={Variational Method{\textparagraph}for Stable Polytropic Galaxies},
	journal={Archive for Rational Mechanics and Analysis},
	year={1999},
	month={Dec},
	day={01},
	volume={150},
	number={3},
	pages={209-224},
	issn={1432-0673},
	doi={10.1007/s002050050187},
	url={https://doi.org/10.1007/s002050050187}
}

@incollection{Rei07,
	title = {Chapter 5 - Collisionless Kinetic Equations from Astrophysics – The Vlasov–Poisson System},
	editor = {C.M. Dafermos and E. Feireisl},
	series = {Handbook of Differential Equations: Evolutionary Equations},
	publisher = {North-Holland},
	volume = {3},
	pages = {383-476},
	year = {2007},
	issn = {1874-5717},
	doi = {https://doi.org/10.1016/S1874-5717(07)80008-9},
	url = {https://www.sciencedirect.com/science/article/pii/S1874571707800089},
	author = {Gerhard Rein}
}

@Article{GR01,
	author={Guo, Yan
	and Rein, Gerhard},
	title={Isotropic Steady States in Galactic Dynamics},
	journal={Communications in Mathematical Physics},
	year={2001},
	month={Jun},
	day={01},
	volume={219},
	number={3},
	pages={607-629},
	issn={1432-0916},
	doi={10.1007/s002200100434},
	url={https://doi.org/10.1007/s002200100434}
}

@BOOK{BT87,
	author = {{Binney}, James and {Tremaine}, Scott},
	title = "{Galactic dynamics}",
	year = 1987,
	adsurl = {https://ui.adsabs.harvard.edu/abs/1987gady.book.....B}
}

@Article{BFH86,
	author={Batt, J.
	and Faltenbacher, W.
	and Horst, E.},
	title={Stationary spherically symmetric models in stellar dynamics},
	journal={Archive for Rational Mechanics and Analysis},
	year={1986},
	month={Jun},
	day={01},
	volume={93},
	number={2},
	pages={159-183},
	issn={1432-0673},
	doi={10.1007/BF00279958},
	url={https://doi.org/10.1007/BF00279958}
}

@article{Sch08,
	author = {Schulze, Achim},
	year = {2008},
	month = {09},
	pages = {711-727},
	title = {Existence of axially symmetric solutions to the Vlasov-Poisson system depending on Jacobi's integral},
	volume = {6},
	journal = {Communications in Mathematical Sciences}
}

@misc{AKV23,
	title={Rotating solutions to the incompressible Euler-Poisson equation with external particle}, 
	author={Diego Alonso-Orán and Bernhard Kepka and Juan J. L. Velázquez},
	year={2023},
	eprint={2302.01146},
	archivePrefix={arXiv},
	primaryClass={math.AP}
}

@Article{Rei03,
	author={Rein, Gerhard},
	title={Non-Linear Stability of Gaseous Stars},
	journal={Archive for Rational Mechanics and Analysis},
	year={2003},
	month={Jun},
	day={01},
	volume={168},
	number={2},
	pages={115-130},
	issn={1432-0673},
	doi={10.1007/s00205-003-0260-y},
	url={https://doi.org/10.1007/s00205-003-0260-y}
}

@article{Jan14,
	author = {Jang, Juhi},
	title = {Nonlinear Instability Theory of Lane-Emden Stars},
	journal = {Communications on Pure and Applied Mathematics},
	volume = {67},
	number = {9},
	pages = {1418-1465},
	doi = {https://doi.org/10.1002/cpa.21499},
	url = {https://onlinelibrary.wiley.com/doi/abs/10.1002/cpa.21499},
	eprint = {https://onlinelibrary.wiley.com/doi/pdf/10.1002/cpa.21499},
	year = {2014}
}

@Article{DLYY02,
	author={Deng, Yinbin
	and Liu, Tai-Ping
	and Yang, Tong
	and Yao, Zheng-an},
	title={Solutions of Euler-Poisson Equations{\textparagraph}for Gaseous Stars},
	journal={Archive for Rational Mechanics and Analysis},
	year={2002},
	month={Sep},
	day={01},
	volume={164},
	number={3},
	pages={261-285},
	issn={1432-0673},
	doi={10.1007/s00205-002-0209-6},
	url={https://doi.org/10.1007/s00205-002-0209-6}
}

@Article{LS09,
	author={Luo, Tao
	and Smoller, Joel},
	title={Existence and Non-linear Stability of Rotating Star Solutions of the Compressible Euler--Poisson Equations},
	journal={Archive for Rational Mechanics and Analysis},
	year={2009},
	month={Mar},
	day={01},
	volume={191},
	number={3},
	pages={447-496},
	issn={1432-0673},
	doi={10.1007/s00205-007-0108-y},
	url={https://doi.org/10.1007/s00205-007-0108-y}
}

@Article{LS08,
	author={Luo, Tao
	and Smoller, Joel},
	title={Nonlinear Dynamical Stability of Newtonian Rotating and Non-rotating White Dwarfs and Rotating Supermassive Stars},
	journal={Communications in Mathematical Physics},
	year={2008},
	month={Dec},
	day={01},
	volume={284},
	number={2},
	pages={425-457},
	issn={1432-0916},
	doi={10.1007/s00220-008-0569-3},
	url={https://doi.org/10.1007/s00220-008-0569-3}
}

@Article{SW19,
	author={Strauss, Walter A.
	and Wu, Yilun},
	title={Rapidly Rotating Stars},
	journal={Communications in Mathematical Physics},
	year={2019},
	month={Jun},
	day={01},
	volume={368},
	number={2},
	pages={701-721},
	issn={1432-0916},
	doi={10.1007/s00220-019-03414-7},
	url={https://doi.org/10.1007/s00220-019-03414-7}
}

@article{SW17,
	author = {Strauss, Walter A. and Wu, Yilun},
	title = {Steady States of Rotating Stars and Galaxies},
	journal = {SIAM Journal on Mathematical Analysis},
	volume = {49},
	number = {6},
	pages = {4865-4914},
	year = {2017},
	doi = {10.1137/17M1119391},
	URL = { 
	https://doi.org/10.1137/17M1119391
	},
	eprint = { 
	https://doi.org/10.1137/17M1119391
	}
}

@article{Hei94,
	author = {Heilig, U.},
	title = {On {Lichtenstein's} analysis of rotating newtonian stars},
	journal = {Annales de l'I.H.P. Physique th\'eorique},
	pages = {457--487},
	publisher = {Gauthier-Villars},
	volume = {60},
	number = {4},
	year = {1994},
	mrnumber = {1288588},
	zbl = {0808.35107},
	language = {en},
	url = {http://www.numdam.org/item/AIHPA_1994__60_4_457_0/}
}

@Article{JM17,
	author={Jang, Juhi
	and Makino, Tetu},
	title={On Slowly Rotating Axisymmetric Solutions of the Euler--Poisson Equations},
	journal={Archive for Rational Mechanics and Analysis},
	year={2017},
	month={Aug},
	day={01},
	volume={225},
	number={2},
	pages={873-900},
	issn={1432-0673},
	doi={10.1007/s00205-017-1115-2},
	url={https://doi.org/10.1007/s00205-017-1115-2}
}

@Article{CL94,
	author={Chanillo, Sagun
	and Li, Yan Yan},
	title={On diameters of uniformly rotating stars},
	journal={Communications in Mathematical Physics},
	year={1994},
	month={Dec},
	day={01},
	volume={166},
	number={2},
	pages={417-430},
	issn={1432-0916},
	doi={10.1007/BF02112323},
	url={https://doi.org/10.1007/BF02112323}
}

@article{CF80,
	title = {The shape of axisymmetric rotating fluid},
	journal = {Journal of Functional Analysis},
	volume = {35},
	number = {1},
	pages = {109-142},
	year = {1980},
	issn = {0022-1236},
	doi = {https://doi.org/10.1016/0022-1236(80)90082-8},
	url = {https://www.sciencedirect.com/science/article/pii/0022123680900828},
	author = {Luis A Caffarelli and Avner Friedman}
}

@Article{Auc91,
	author={Auchmuty, Giles},
	title={The global branching of rotating stars},
	journal={Archive for Rational Mechanics and Analysis},
	year={1991},
	month={Jun},
	day={01},
	volume={114},
	number={2},
	pages={179-193},
	issn={1432-0673},
	doi={10.1007/BF00375402},
	url={https://doi.org/10.1007/BF00375402}
}

@article{AB71M,
	author = {J. F. G. Auchmuty and Richard Beals},
	year = {1971},
	month = {04},
	pages = {L79},
	title = {Models of Rotating Stars},
	volume = {165},
	journal = {The Astrophysical Journal},
	doi = {10.1086/180721}
}

@book{Jar13,
	title={Theories of Figures of Celestial Bodies},
	author={Jardetzky, W.S.},
	isbn={9780486174662},
	series={Dover Books on Physics},
	url={https://books.google.de/books?id=ugnCAgAAQBAJ},
	year={2013},
	publisher={Dover Publications}
}

@article{Cha67,
	author = {Chandrasekhar, S.},
	title = {Ellipsoidal figures of equilibrium—an historical account},
	journal = {Communications on Pure and Applied Mathematics},
	volume = {20},
	number = {2},
	pages = {251-265},
	doi = {https://doi.org/10.1002/cpa.3160200203},
	url = {https://onlinelibrary.wiley.com/doi/abs/10.1002/cpa.3160200203},
	eprint = {https://onlinelibrary.wiley.com/doi/pdf/10.1002/cpa.3160200203},
	year = {1967}
}

@Article{Lic33,
	author={Lichtenstein, Leon},
	title={Untersuchungen {\"u}ber die Gleichgewichtsfiguren rotierender Fl{\"u}ssigkeiten, deren Teilchen einander nach dem Newtonschen Gesetze anziehen},
	journal={Mathematische Zeitschrift},
	year={1933},
	month={Dec},
	day={01},
	volume={36},
	number={1},
	pages={481-562},
	issn={1432-1823},
	doi={10.1007/BF01188634},
	url={https://doi.org/10.1007/BF01188634}
}

@article{Cha33,
	author = {Chandrasekhar, S. and Milne, E. A.},
	title = "{The Equilibrium of Distorted Polytropes: (I). The Rotational Problem}",
	journal = {Monthly Notices of the Royal Astronomical Society},
	volume = {93},
	number = {5},
	pages = {390-406},
	year = {1933},
	month = {03},
	issn = {0035-8711},
	doi = {10.1093/mnras/93.5.390},
	url = {https://doi.org/10.1093/mnras/93.5.390},
	eprint = {https://academic.oup.com/mnras/article-pdf/93/5/390/2793671/mnras93-0390.pdf},
}

@article{Zei24,
	author = {Zeipel, H. v.},
	title = "{The Radiative Equilibrium of a Slightly Oblate Rotating Star}",
	journal = {Monthly Notices of the Royal Astronomical Society},
	volume = {84},
	number = {9},
	pages = {684-702},
	year = {1924},
	month = {07},
	issn = {0035-8711},
	doi = {10.1093/mnras/84.9.684},
	url = {https://doi.org/10.1093/mnras/84.9.684},
	eprint = {https://academic.oup.com/mnras/article-pdf/84/9/684/2793260/mnras84-0684.pdf},
}

@article{Mil23,
	author = {Milne., E. A.},
	title = "{The Equilibrium of a Rotating Star}",
	journal = {Monthly Notices of the Royal Astronomical Society},
	volume = {83},
	number = {3},
	pages = {118-147},
	year = {1923},
	month = {01},
	issn = {0035-8711},
	doi = {10.1093/mnras/83.3.118},
	url = {https://doi.org/10.1093/mnras/83.3.118},
	eprint = {https://academic.oup.com/mnras/article-pdf/83/3/118/3420562/mnras83-0118.pdf},
}

@book{Cha39,
	title={An Introduction to the Study of Stellar Structure},
	author={Chandrasekhar, S.},
	lccn={39008320},
	series={Astrophysical monographs},
	url={https://books.google.de/books?id=5duRzgEACAAJ},
	year={1939},
	publisher={University of Chicago Press}
}

@book{Rud76,
	title={Principles of Mathematical Analysis},
	author={Rudin, W.},
	isbn={9780070856134},
	lccn={75179033},
	series={International series in pure and applied mathematics},
	url={https://books.google.de/books?id=kwqzPAAACAAJ},
	year={1976},
	publisher={McGraw-Hill}
}

@article{Cob24,
	title = {Bounded solutions in incompressible hydrodynamics},
	journal = {Journal of Functional Analysis},
	volume = {286},
	number = {5},
	pages = {110290},
	year = {2024},
	issn = {0022-1236},
	doi = {https://doi.org/10.1016/j.jfa.2023.110290},
	url = {https://www.sciencedirect.com/science/article/pii/S0022123623004470},
	author = {Dimitri Cobb},
}

@book{NCW99,
	title={The Principia: Mathematical Principles of Natural Philosophy},
	author={Newton, I. and Cohen, I.B. and Whitman, A.},
	isbn={9780520088160},
	lccn={99010278},
	series={The Principia: Mathematical Principles of Natural Philosophy},
	url={https://books.google.de/books?id=k_NgQgAACAAJ},
	year={1999},
	publisher={University of California Press}
}

@book{New87,
	title={Philosophiae naturalis principia mathematica},
	author={Newton, I.},
	series={Early English books online},
	url={https://books.google.de/books?id=-dVKAQAAIAAJ},
	year={1687},
	publisher={Jussu Societas Regi{\ae} ac typis Josephi Streater, prostant venales apud Sam. Smith}
}

@misc{She24,
	author = "{Wikipedia contributors}",
	title = "Shell theorem --- {Wikipedia}{,} The Free Encyclopedia",
	year = "2024",
	url = "https://en.wikipedia.org/w/index.php?title=Shell_theorem&oldid=1212159714",
	note = "[Online; accessed 21-April-2024]"
}

@book{Eva10,
	title={Partial Differential Equations},
	author={Evans, L.C.},
	isbn={9780821849743},
	lccn={2009044716},
	series={Graduate studies in mathematics},
	url={https://books.google.de/books?id=Xnu0o_EJrCQC},
	year={2010},
	publisher={American Mathematical Society}
}

@book{Dal93,
	title={An introduction to $\Gamma$-convergence},
	author={Dal Maso, Gianni},
	volume={8},
	year={1993},
	series={Progress in Nonlinear Differential Equations and Their Applications},
	publisher={Birkhäuser Boston, MA}
}

@article{Lie77,
	author = {Lieb, Elliott H.},
	title = {Existence and Uniqueness of the Minimizing Solution of {C}hoquard's Nonlinear Equation},
	journal = {Studies in Applied Mathematics},
	volume = {57},
	number = {2},
	pages = {93-105},
	doi = {https://doi.org/10.1002/sapm197757293},
	url = {https://onlinelibrary.wiley.com/doi/abs/10.1002/sapm197757293},
	eprint = {https://onlinelibrary.wiley.com/doi/pdf/10.1002/sapm197757293},
	year = {1977}
}

@book{Bre11,
	title={Functional analysis, Sobolev spaces and partial differential equations},
	author={Brezis, Haim},
	volume={2},
	number={3},
	year={2011},
	publisher={Springer}
}

@book{BCD11,
	title={Fourier Analysis and Nonlinear Partial Differential Equations},
	author={Bahouri, H. and Chemin, J.Y. and Danchin, R.},
	isbn={9783642168307},
	series={Grundlehren der mathematischen Wissenschaften},
	url={https://books.google.de/books?id=CcTnaveQkn0C},
	year={2011},
	publisher={Springer Berlin Heidelberg}
}

@article{AB71,
	title={Variational solutions of some nonlinear free boundary problems},
	author={J. F. G. Auchmuty and Richard Beals},
	journal={Archive for Rational Mechanics and Analysis},
	year={1971},
	volume={43},
	pages={255-271},
	url={https://api.semanticscholar.org/CorpusID:122838332}
}

@article{JM19,
	title = {On rotating axisymmetric solutions of the {E}uler–{P}oisson equations},
	journal = {Journal of Differential Equations},
	volume = {266},
	number = {7},
	pages = {3942-3972},
	year = {2019},
	issn = {0022-0396},
	doi = {https://doi.org/10.1016/j.jde.2018.09.023},
	url = {https://www.sciencedirect.com/science/article/pii/S0022039618305667},
	author = {Juhi Jang and Tetu Makino},
}

@article{JS22,
	place = {Country unknown/Code not available}, 
	title = {On Uniformly Rotating Binary Stars and Galaxies}, 
	url = {https://par.nsf.gov/biblio/10334799}, 
	DOI = {10.1007/s00205-022-01766-4}, 
	abstractNote = {}, 
	journal = {Archive for Rational Mechanics and Analysis}, 
	volume = {244}, 
	number = {2}, 
	author = {Juhi Jang and Jinmyoung Seok}, 
	year = {2022},
}

@article{Li91,
	author={YanYan Li},
	title={On uniformly rotating stars},
	journal={Archive for Rational Mechanics and Analysis},
	year={1991},
	month={Dec},
	day={01},
	volume={115},
	number={4},
	pages={367-393},
	issn={1432-0673},
	doi={10.1007/BF00375280},
	url={https://doi.org/10.1007/BF00375280}
}

@article{LY87,
	author={Lieb, Elliott H.
	and Yau, Horng-Tzer},
	title={The {C}handrasekhar theory of stellar collapse as the limit of quantum mechanics},
	journal={Communications in Mathematical Physics},
	year={1987},
	month={Mar},
	day={01},
	volume={112},
	number={1},
	pages={147-174},
	issn={1432-0916},
	doi={10.1007/BF01217684},
	url={https://doi.org/10.1007/BF01217684}
}

@article{McC06,
	title={STABLE ROTATING BINARY STARS AND FLUID IN A TUBE},
	author={Robert J. McCann},
	year={2006},
	journal={Houston Journal of Mathematics},
	volume={32(2)},
	pages={603–631},
	url={https://api.semanticscholar.org/CorpusID:7123278}
}

\end{document}